\documentclass{amsart}

\usepackage{amsmath,amssymb,amsthm,amsfonts,mathrsfs,textcomp,mathtools,accents}
\usepackage{graphicx}
\usepackage{enumerate}
\usepackage[shortlabels]{enumitem}
\setlist[1]{topsep=0.2cm,itemsep=0.2cm}

\usepackage[headings]{fullpage}
\usepackage{hyperref}
\hypersetup{
  colorlinks=false
}

\DeclareFontFamily{U}{mathx}{\hyphenchar\font45}
\DeclareFontShape{U}{mathx}{m}{n}{<-> mathx10}{}
\DeclareSymbolFont{mathx}{U}{mathx}{m}{n}
\DeclareMathAccent{\widebar}{0}{mathx}{"73}

\vfuzz2pt 
\hfuzz2pt 

\usepackage[figuresright]{rotating}
\usepackage{tikz}
\usetikzlibrary{arrows,calc,positioning}

\newtheorem{theorem}{Theorem}[section]
\newtheorem{corollary}[theorem]{Corollary}
\newtheorem{lemma}[theorem]{Lemma}
\newtheorem{proposition}[theorem]{Proposition}

\theoremstyle{definition}
\newtheorem{definition}[theorem]{Definition}

\theoremstyle{remark}
\newtheorem{remark}[theorem]{Remark}

\numberwithin{equation}{section}

\newcommand{\norm}[1]{\left\Vert#1\right\Vert}
\newcommand{\abs}[1]{\left\vert#1\right\vert}
\newcommand{\set}[1]{\left\{#1\right\}}

\newcommand{\R}{\mathbb R}
\newcommand{\C}{\mathbb C}
\newcommand{\N}{\mathbb N}
\newcommand{\Z}{\mathbb Z}
\newcommand{\Q}{\mathbb Q}
\newcommand{\CC}{{\mathcal C}}
\newcommand{\BB}{{\mathcal B}}
\renewcommand{\AA}{{\mathcal A}}
\newcommand{\PP}{{\mathcal P}}
\newcommand{\II}{{\mathfrak J}}

\newcommand{\VV}{{\mathcal V}}
\newcommand{\eps}{\varepsilon}
\newcommand{\trianglecl}{\widebar{\triangle}}
\newcommand{\couple}[4]{\left(\frac{#1}{#2},\frac{#3}{#4}\right)}
\newcommand{\triple}[3]{\left(\frac{#1}{#3},\frac{#2}{#3}\right)}
\newcommand{\singx}[3]{\left({#1},\frac{#2}{#3}\right)}
\newcommand{\singy}[3]{\left(\frac{#1}{#2},{#3}\right)}

\DeclareMathOperator*{\esssup}{ess\,sup}
\DeclareMathOperator*{\essinf}{ess\,inf}
\DeclareMathOperator*{\diam}{diam}
\DeclareMathOperator*{\Prob}{Prob}
\DeclareMathOperator*{\ind}{\textbf{1}}

\usepackage{xcolor}
\definecolor{dgreen}{RGB}{0,160,0}    
\definecolor{orange}{RGB}{251,111,66} 
\definecolor{lblue}{RGB}{8,180,238}   
\definecolor{DBlu}{rgb}{.1,.1,.7}     

\def\centerarcfill[#1](#2)(#3:#4:#5){\draw[#1] (#2) --
  ($(#2)+({#5*cos(#3)},{#5*sin(#3)})$) arc (#3:#4:#5);}

\begin{document}

\title[A slow triangle map and a tree of rational pairs]{A slow triangle map
  with a segment of indifferent fixed points and a complete tree of
  rational pairs}%

\author{Claudio Bonanno}%
\address{Dipartimento di Matematica, Universit\`a di Pisa, Largo Bruno
  Pontecorvo 5, 56127 Pisa, Italy} \email{claudio.bonanno@unipi.it}

\author{Alessio Del Vigna}%
\address{Dipartimento di Matematica, Universit\`a di Pisa, Largo Bruno
  Pontecorvo 5, 56127 Pisa, Italy} \email{delvigna@mail.dm.unipi.it}

\author{Sara Munday}%
\address{Dipartimento di Architettura, Universit\`a degli studi Roma
  Tre, via Madonna dei Monti 40, 00184, Roma}
\email{saraann.munday@uniroma3.it}

\thanks{We thank the referee for useful comments and for pointing out a mistake in the previous version.
This research is part of the authors' activity within the
  DinAmicI community, see \url{www.dinamici.org}. The authors are
  partially supported by the research project
  PRA$\textunderscore$2017$\textunderscore$22 ``Dynamical systems in
  analysis, geometry, mathematical logic and celestial mechanics'' of
  the University of Pisa. C. Bonanno and S. Munday are partially
  supported by the Istituto Nazionale di Alta Matematica and its
  division Gruppo Nazionale di Fisica Matematica. C. Bonanno is partially supported also by the research
  project PRIN 2017S35EHN$\textunderscore$004 ``Regular and stochastic
  behaviour in dynamical systems'' of the Italian Ministry of
  Education and Research.}

\begin{abstract}
    We study the two-dimensional continued fraction algorithm
    introduced in \cite{garr} and the associated \emph{triangle map}
    $T$, defined on a triangle $\triangle\subseteq \R^2$. We introduce
    a slow version of the triangle map, the map $S$, which is ergodic
    with respect to the Lebesgue measure and preserves an infinite
    Lebesgue-absolutely continuous invariant measure. We discuss the properties that the
    two maps $T$ and $S$ share with the classical Gauss and
    Farey maps on the interval, including an analogue of the
    weak law of large numbers and of Khinchin's weak law for the
    digits of the triangle sequence, the expansion associated to $T$. Finally, we confirm
    the role of the map $S$ as a two-dimensional version of the Farey
    map by introducing a complete tree of rational pairs,
    constructed using the inverse branches of $S$, in the same way as the
    Farey tree is generated by the Farey map, and then, equivalently,
    generated by a generalised mediant operation.
\end{abstract}

\maketitle

\section{Introduction}
\label{sec:intro}

The theory of (regular) continued fractions has
received much attention from researchers in ergodic theory in the last decades, most recently
thanks to the development of infinite ergodic theory
(\cite{aaronson:iet,IK,isola,munday:iet}). For instance, the general
results of ergodic theory have been applied to the Gauss and the Farey
maps to obtain new proofs of the Gauss-Kuzmin Theorem,  Khinchin's weak law and
other metric results first obtained by Khinchin and L\'evy.

One of the most notable results in the theory of continued
fractions is Lagrange's Theorem, which states that a real number has an
eventually periodic continued fraction expansion if and only if it is a quadratic
irrational. In a letter to Jacobi, Hermite asked
whether it was possible to obtain a similar classification for the
algebraic irrationals of higher degree. It was for this reason that
Jacobi developed what is now called the Jacobi-Perron algorithm, and the
theory of multidimensional continued fractions began. Unfortunately,
despite numerous attempts and the introduction of  many different algorithms,
Hermite's question remains unanswered. We refer
the reader to \cite{brentjes} for a geometric description of the
theory of multidimensional continued fractions and to \cite{schw-book}
for some applications of ergodic theory in this area.

In this paper we consider the two-dimensional version of the continued
fraction algorithm introduced in \cite{garr}. The algorithm, which we
describe in Section~\ref{sec:ts}, is based on the iteration of a map
$T$ defined on a triangle $\triangle\subseteq \R^2$, and for this reason, $T$ is
referred to as the \emph{triangle map} and the expansions obtained through
this method are called \emph{triangle sequences}. The ergodic
properties of $T$ are studied in \cite{nog,garr2}; in particular, it is shown
that the map $T$ is ergodic with respect to the Lebesgue
measure on $\triangle$ and preserves a Lebesgue-absolutely continuous
probability measure. The triangle map behaves similarly to the Gauss
map in many ways, for instance, the triangle map acts on triangle sequences by
left-shifting the digits, exactly as the Gauss map does for the
regular continued fraction expansions.

The similarity between the two maps is strengthened by the results of
this paper. We introduce a map $S$ on the triangle $\triangle$, which
plays for $T$ the same role that the Farey map plays for the Gauss map. For
this reason we call $S$ a \emph{slow triangle map}. From the point of
view of ergodic theory, it is interesting to notice that the map $S$
is a piecewise linear fractional map on a finite partition with a segment of
indifferent fixed points, that is, points for which the determinant
of the Jacobian is $1$, and that it is non-uniformly expanding
elsewhere. We show that, similarly to the Farey map, $S$ preserves an
infinite Lebesgue-absolutely continuous measure and it is ergodic with respect
to the Lebesgue measure on $\triangle$. It follows that the
statistical behaviour of summable observables along orbits of $S$ is
non-standard. This phenomenon, for the Farey map, makes it impossible to
improve Khinchin's weak law for the coefficients of the regular
continued fraction expansion to a strong law. However, we are able to exploit certain results from infinite
ergodic theory to show that the system generated by $S$ is pointwise
dual ergodic and, under a further assumption, prove a weak law of large numbers for $S$, from
which we obtain an analogue of Khinchin's weak law for the
digits of the triangle sequences.

The connection between the Gauss and the Farey maps and the regular
continued fractions can be studied also through the Farey
tree, a binary tree which contains all the rational numbers in $(0,1)$
(see \emph{e.g.} \cite{BI}). The Farey tree is strongly related to the
Farey map, but it can also be defined through the mediant operation
on fractions. We recall the definition of the Farey tree and its basic
properties in Section~\ref{sec:tree}. Analogously,
in this paper we define a tree of rational pairs, first by using a
suitable modification of the map $S$ limited to the set of
indifferent fixed points, and then by using a generalised mediant operation
defined on pairs of rational numbers. We prove that the two trees are
in fact identical level by level, and that the tree is complete, that
is, it contains every pair of rational numbers in $\trianglecl$. This
last result improves on the results of \cite{amburg:stern_seq}, where the authors
study different trees generated by the triangle map and its
generalisations, but show that none of them are complete.

The paper is organised as follows. In Section~\ref{sec:setting} we
recall the definition of the triangle map $T$ and the associated
two-dimensional continued fraction algorithm. We also introduce the
map $S$ and study its basic ergodic properties. Lastly, we define a
dynamical system on an infinite strip, which is isomorphic to the
action of $S$ on $\trianglecl$. This isomorphism gives a useful intuitive representation
of the action of $S$ and simplifies some
computations. Section~\ref{sec:wlln} contains the main results on the
ergodic properties of $S$. We prove that the map $S$ is
pointwise dual ergodic with respect to a sequence $a_n(S) \asymp \frac{n}{\log^2 n}$, and use various results from Infinite Ergodic Theory (see \cite{aaronson:iet,munday:iet}) to show that if the sequence $a_n(S)$ is regularly varying (see \eqref{def:reg-var}) then we have the weak law of large
numbers for summable observables (Theorem~\ref{thm-wlln}), and a
Khinchin-type weak law for the triangle sequences
(Corollary~\ref{cor-kuzdig}). The technical results are proved
in Appendix~\ref{sec:matrices} and~\ref{sec:varying-seq}. In
Section~\ref{sec:ba} we apply a result from \cite{LM} to our map
$S$. Recalling that the behaviour of Birkhoff sums of summable
observables drastically changes in infinite ergodic theory, following
\cite{limix}, in \cite{LM} the authors give a Birkhoff Ergodic Theorem
for non-summable observables for infinite-measure-preserving dynamical
systems. We use the version of $S$ defined on the strip and prove a
pointwise convergence theorem for non-summable observables. Finally,
in Section~\ref{sec:tree} we introduce the tree of rational pairs
produced by the counterimages of $\tilde S$, a slightly modified
version of $S$. In Theorem~\ref{thm:tree-compl} we prove that the tree
is complete and that each pair of rationals appears exactly once. Then
we introduce an algorithm on the triangle $\triangle$, based on the
notion of mediant of two fractions, and show in
Theorem~\ref{thm:tree-mediant} that the tree can be generated also by
this algorithm. This concludes the similarity between the slow
triangle map $S$ and the Farey map. For these reasons $S$ may be
considered a \emph{two-dimensional Farey map}. Many interesting
questions remain open about the tree and its connections with the map
$S$ and with the approximation of irrational pairs by rational
pairs. These problems will be subject of future research.

\section{The setting}
\label{sec:setting}

As anticipated in the introduction, the main goal of this paper is to
investigate a two-dimensional map related to the triangle map $T$, as
introduced in \cite{garr}. Let us first recall the definition of the
map $T:\triangle\rightarrow\trianglecl$, where $\triangle$ is the
triangle
\[
    \triangle \coloneqq \set{(x,y)\in \R^2 \,:\, 1 \ge x \ge y > 0}.
\]
Consider the countable partition $\{\triangle_k\}_{k\ge 0}$ of
$\triangle$ into disjoint triangles
\[
    \triangle_k \coloneqq \set{(x,y)\in \triangle \,:\, 1-x-ky \ge 0 >
      1-x-(k+1)y},
\]
shown in Figure~\ref{fig:partition}, and the segment
$\Lambda \coloneqq \set{0\leq x\leq 1,\,y=0}$. Note that
$\trianglecl = \bigcup_{k\ge 0} \triangle_k \cup \Lambda$. The
triangle map $T:\triangle \to \trianglecl$ is then defined to be
\[
    \label{eq-triangle}
    T(x,y) \coloneqq \left( \frac yx, \frac{1-x-ky}{x} \right) \quad
    \text{for }(x,y)\in \triangle_k.
\]
We now define a map $S:\trianglecl \rightarrow \trianglecl$ that can
be thought of as a ``slow version'' of the map $T$. Let us start with
the partition $\{\Gamma_0, \Gamma_1\}$ of $\trianglecl$ (see
Figure~\ref{fig:partition}), where
\[
    \Gamma_0 \coloneqq \triangle_0 = \set{(x,y)\in \R^2 \,:\, 1 \ge
    x \ge y > 1-x},
\]
and
\[
    \Gamma_1 \coloneqq \trianglecl\setminus \Gamma_0 = \bigcup_{k\ge
    1} \triangle_k \cup \Lambda = \set{(x,y)\in \R^2 \,:\,
    1-y \ge x\ge y\ge 0}.
\]
We define $S: \trianglecl \rightarrow \trianglecl$ by setting
\begin{equation}\label{eq-slow}
    S(x,y) \coloneqq
    \begin{cases}
        \left(\frac yx,\,\frac{1-x}{x}\right) & \text{if }(x,y)\in\Gamma_0\\[0.2cm]
        \left(\frac{x}{1-y},\,\frac{y}{1-y}\right) & \text{if
        }\,(x,y)\in\Gamma_1
    \end{cases}.
\end{equation}

\begin{figure}[h]
    \begin{tikzpicture}[scale=3.2]
    \def\s{2}
    \def\o{0.075}

    \draw[thin] (0,0) node[below] {\footnotesize $(0,0)$} -- (\s,0)
    node[below] {\footnotesize $(1,0)$} -- (\s,\s) node[above]
    {\footnotesize $(1,1)$}-- cycle;

    \draw[very thick,fill=gray!20] (\s,0) -- (1/3*\s,1/3*\s) -- (1/4*\s,1/4*\s);
    \draw[very thick,dashed] (\s,0) -- (1/4*\s,1/4*\s);

    \foreach \x in {2,3,4,5,6}
    {
      \draw[thin] (1/\x*\s,1/\x*\s) -- (\s,0);
    }

    \foreach \x in {0,1,2,3,4}
    {
      \draw[very thin,gray!70] ({1/2*(1/(\x+1)+1/(\x+2))*\s+\o},{1/2*(1/(\x+1)+1/(\x+2))*\s}) --
      ({1/2*(1/(\x+1)+1/(\x+2))*\s-1.5*(\x+2)*\o},{1/2*(1/(\x+1)+1/(\x+2))*\s+1/2\o})
      node[black,left] {\footnotesize $\Delta_\x$};
    }
\end{tikzpicture}
    \hspace{2cm}
    \begin{tikzpicture}[scale=2.425]
    \def\t{-1.6}
    \draw[very thin] (0,0) node[below] {\footnotesize $(0,0)$} -- (1,0)
    node[below] {\footnotesize $(1,0)$} -- (1,1) node[above] {\footnotesize $(1,1)$} -- cycle;
    \draw[very thick,fill=gray!20] (1,0) -- (1,1) -- (1/2,1/2);
    \draw[very thick,dashed] (1/2,1/2) -- (1,0);
    \node at (5/6,1/2) {\footnotesize $\Gamma_0$};

    \draw[very thin] (0,0+\t) node[below] {\footnotesize $(0,0)$} -- (1,\t)
    node[below] {\footnotesize $(1,0)$} -- (1,1+\t) node[above] {\footnotesize $(1,1)$} -- cycle;
    \draw[very thick,fill=gray!20] (1,\t) --
    (1/2,1/2+\t) -- (0,\t) -- cycle;
    \node at (1/2,1/6+\t) {\footnotesize $\Gamma_1$};
\end{tikzpicture}
    \caption{\emph{Left}. Partition of $\triangle$ into
      $\{\triangle_k\}_{k\geq 0}$. \emph{Right}. Partition of
      $\trianglecl$ into $\Gamma_0$ and
      $\Gamma_1$.}\label{fig:partition}
\end{figure}

The relation between these two maps is that the triangle map $T$ is
the jump transformation of $S$ on the set $\Gamma_0$. In other words,
if we introduce the first passage time function
\[
    \tau(x,y) \coloneqq 1+\min \set{k\ge 0 \,:\, S^k(x,y)\in\Gamma_0},
\]
then it can be readily calculated that $T(x,y) = S^{\tau(x,y)}(x,y)$
for each $(x,y) \in \triangle$. Notice that
$S(\triangle_k) = \triangle_{k-1}$ for $k\ge 1$, and that
$S(\Gamma_0) \cup \{x=y,\, 0\le x\le 1\} =
S(\Gamma_1)=\trianglecl$. Moreover the segment $\Lambda$ consists of
fixed points, that is $S(x,0) = (x,0)$ for $0\leq x\leq 1$. The
determinant of the Jacobian of $S$ turns out to be
\[
    JS(x,y) =
    \begin{cases}
        \frac{1}{x^3}     & \text{if }(x,y)\in\Gamma_0\\
        \frac{1}{(1-y)^3} & \text{if }(x,y) \in \Gamma_1
    \end{cases},
\]
and it follows that $JS(x,0) = 1$ for $0\leq x\leq 1$. Thus the
segment $\Lambda$ consists of indifferent fixed points.

\subsection{Invariant measure and the transfer operator}
\label{sec:imto}
In \cite{nog} it is shown that the map $T$ is ergodic, and from
\cite{garr2} we know that the unique ergodic, Lebesgue-absolutely
continuous $T$-invariant probability measure on $\triangle$ is given
by the density \[ k(x,y) = \frac{12}{\pi^2x(1+y)}.
\]
Applying classical results from ergodic theory
(\cite{aaronson:iet,munday:iet}), the existence of an ergodic,
Lebesgue-absolutely continuous $S$-invariant measure immediately
follows. One way to find the density $h(x,y)$ of this measure is to
look for a fixed point of the transfer operator $\PP$ associated to
$S$. Let
\[
    \phi_0 \coloneqq (S|_{\Gamma_0})^{-1} : \trianglecl\setminus
    \{x=y,\,0\le x\le 1\} \rightarrow \Gamma_0,\quad \phi_0(x,y) =
    \left(\frac{1}{1+y},\ \frac{x}{1+y} \right)
\]
and
\[
    \phi_1\coloneqq (S|_{\Gamma_1})^{-1} : \trianglecl \rightarrow
    \Gamma_1,\quad \phi_1(x,y) = \left( \frac{x}{1+y},\
      \frac{y}{1+y}\right)
\]
be the local invers maps of $S$. The transfer operator $\PP$ is then
defined for each measurable function $f$ on $\trianglecl$ by setting
\begin{align*}
    (\PP f)(x,y) & = |J\phi_0(x,y)|f(\phi_0(x,y)) + |J\phi_1(x,y)|f(\phi_1(x,y))=\\
    & = \frac{1}{(1+y)^3}f\left( \frac{1}{1+y},\ \frac{x}{1+y} \right)
    +\frac{1}{(1+y)^3} f \left( \frac{x}{1+y},\ \frac{y}{1+y} \right).
\end{align*}
A straightforward computation shows that $\PP h = h$ for
$h(x,y) = \frac{1}{xy}$.

\begin{proposition}\label{prop-mis-inv}
    The system $(\trianglecl, \mu, S)$ is conservative, and the map
    $S$ admits a unique, up to multiplicative constants, ergodic
    invariant measure $\mu$, absolutely continuous with respect to the
    Lebesgue measure $m$, given by the density
    $h(x,y) = \frac{1}{xy}$. The measure $\mu$ is $\sigma$-finite and
    $\mu(\trianglecl)= +\infty$.
\end{proposition}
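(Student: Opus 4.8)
The plan is to read off the invariant measure from the identity $\PP h = h$ noted above, and then transfer conservativity and ergodicity from the triangle map $T$, exploiting that $T$ is the jump transformation of $S$ on $\Gamma_0$ with passage time $\tau$.

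First, since $\PP h = h$ with $h(x,y) = \tfrac1{xy}$, the Lebesgue-absolutely continuous measure $d\mu = h\,dm$ is $S$-invariant; as $h>0$ $m$-almost everywhere on $\trianglecl$, one has in fact $\mu\sim m$, so $S$ is a nonsingular transformation (this also makes the abstract existence of an a.c.\ invariant measure, obtained from $T$ by the classical inducing machinery of \cite{aaronson:iet,munday:iet}, explicit). To see that $\mu$ is $\sigma$-finite, write $\trianglecl = \Lambda \cup \bigcup_{n\ge1} K_n$ with $K_n \coloneqq \{(x,y)\in\trianglecl : y\ge 1/n\}$: on $K_n$ one has $xy\ge 1/n^2$, so $\mu(K_n)\le n^2 m(K_n)<\infty$, while $\mu(\Lambda)=0$ since $\Lambda$ is Lebesgue-null. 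Infiniteness follows from the direct computation
\[
    \mu(\trianglecl) = \int_{\{0<y\le x\le 1\}} \frac{dx\,dy}{xy} = \int_0^1 \frac{-\log y}{y}\,dy = +\infty .
\]

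For conservativity and ergodicity, recall that $T$ preserves the probability density $k(x,y)=\tfrac{12}{\pi^2 x(1+y)}$ (\cite{garr2}) and is ergodic (\cite{nog}); being finite-measure-preserving, $T$ is also conservative by the Poincar\'e recurrence theorem. The passage time $\tau$ is finite $m$-almost everywhere --- indeed $\tau\equiv k+1$ on $\triangle_k$, and it is infinite only on the null set $\Lambda$ --- so $T = S^{\tau}$ links the two dynamics on a set of full measure. If $B\subseteq\trianglecl$ is $S$-invariant, then $T^{-1}B = \{x : S^{\tau(x)}x\in B\} = B$, so ergodicity of $T$ forces $m(B)\in\{0,m(\trianglecl)\}$; hence $S$ is ergodic (the usual $\bmod 0$ reduction, replacing $B$ by $\limsup_n S^{-n}B$, lets one assume $B$ is strictly invariant). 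Likewise, if $W$ is an $S$-wandering set, then, because every $T$-orbit is a subsequence of an $S$-orbit and $\tau\ge 1$, one gets $W\cap T^{-n}W=\emptyset$ for all $n\ge 1$, i.e.\ $W$ is $T$-wandering, so $m(W)=0$ by conservativity of $T$; hence $S$ is conservative. (Equivalently, this is the standard correspondence between a nonsingular map and its jump transformation, \cite{aaronson:iet,munday:iet}.) Finally, since $S$ is conservative ergodic and $\mu\sim m$ is $\sigma$-finite and invariant, $\mu$ is the unique such measure up to a multiplicative constant (\cite{aaronson:iet}), and in particular it is ergodic.

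The only genuinely external input is the ergodicity of $T$; everything else is bookkeeping. The point needing slight care is the passage from $m$-a.e.-invariant (resp.\ $S$-wandering) sets to genuinely $T$-invariant (resp.\ $T$-wandering) sets, together with the observation that $\tau<\infty$ a.e.\ so that the fixed-point segment $\Lambda$ may be discarded throughout; the two integral estimates for $\sigma$-finiteness and for $\mu(\trianglecl)=+\infty$ are routine.
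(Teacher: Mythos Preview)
Your proof is correct. The main difference from the paper is the route to conservativity: the paper argues directly on $S$ via Maharam's Recurrence Theorem, using that $\Gamma_0$ has finite $\mu$-measure and is a sweep-out set (since $\triangle_k\subseteq S^{-k}\Gamma_0$), whereas you transfer conservativity from $T$ by showing that an $S$-wandering set is $T$-wandering. Both are standard; the paper's argument is slightly more self-contained (it does not use conservativity of $T$), while yours makes the link to the jump transformation fully explicit and in the process spells out the ergodicity step that the paper leaves to a citation of ``classical results''. For $\sigma$-finiteness the paper invokes the computation of $\mu(\triangle_k)$ from \cite{garr2}, while your elementary bound via $K_n=\{y\ge 1/n\}$ avoids that reference.
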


\begin{proof}
    For conservativity, in light of Maharam's Recurrence Theorem
    \cite[Theorem~2.2.14]{munday:iet}, it is enough to observe that
    \[
        \triangle = \bigcup_{n=0}^{\infty}\, S^{-n} (\Gamma_0)\pmod{\mu},
    \]
    which is a consequence of the fact that
    $\triangle_k\subseteq S^{-k} (\Gamma_0)$, for each $k\ge0$. We
    have already discussed the existence of the measure $\mu$ above.
    That $\mu$ is unique follows, for example, from
    \cite[Theorem~2.4.35]{munday:iet}, on noting that $S$ is
    conservative, ergodic, and is certainly non-singular with respect
    to $m$.  Finally, that $\mu$ is $\sigma$-finite follows from
    computing the measure of the triangles $\triangle_k$ as in
    \cite{garr2}.
\end{proof}

\subsection{An equivalent system on a strip}
\label{sec:ess}
For later use, we now introduce another system, isomorphic to the map
$S$. Using the change of coordinates defined on $\triangle$ by
\[
    (x,y) \mapsto (u,v) \in \Sigma \coloneqq (0,1] \times [0,+\infty),
    \qquad u(x,y) = \frac yx,\quad v(x,y) = \frac{1-x}{y},
\]
one can show that the system $(\triangle, \mu, S)$ is isomorphic
mod~$\mu$ to the system $(\Sigma, \rho, F)$ given by
\[
    F(u,v) =
    \begin{cases}
        \left(v,\frac 1v(\frac 1u -1)\right) & %
        \text{if }(u,v) \in \Pi_0 \coloneqq \{ (u,v)\in\Sigma \,:\, v<1\} \\[0.2cm]
        \left(u,v-1\right) & %
        \text{if }(u,v)\in\Pi_1 \coloneqq \{(u,v)\in\Sigma \,:\, v\ge
        1\}
    \end{cases},
\]
with $d\rho(u,v) = \frac{1}{1+uv}du\,dv$. The sets
$\Pi_0$ and $\Pi_1$ partition the strip $\Sigma$ and correspond
mod~$\mu$ to $\Gamma_0$ and $\Gamma_1$, respectively. We also
introduce the countable partition $\{\Sigma_k\}_{k\geq 0}$, where
\[
    \Sigma_k \coloneqq \set{(u,v)\in \Sigma\, :\, k\le v < k+1}
\]
is a unit squares, as shown in Figure~\ref{fig:sigma_k}. Note that
$\Sigma=\bigcup_{k\geq 0} \Sigma_k$. This is the analogue of the
partition $\{\triangle_k\}_{k\geq 0}$ of the triangle $\triangle$. The
local inverses of the map $F$ are given by
\[
  (F|_{\Pi_0})^{-1} (u,v) = \left(\frac{1}{uv+1},u\right)
  \quad\text{and}\quad
  (F|_{\Pi_1})^{-1} (u,v) = \left(u,v+1\right),
\]
so that the transfer operator $\PP_F$ associated to $F$ turns out to be
\[
    (\PP_F g) (u,v) = \frac{u}{(uv+1)^2} g\left(\frac{1}{uv+1},u\right)
    + g\left(u,v+1\right).
\]
It can be immediately verified that the density of the measure $\rho$
is a fixed point of $\PP_F$.

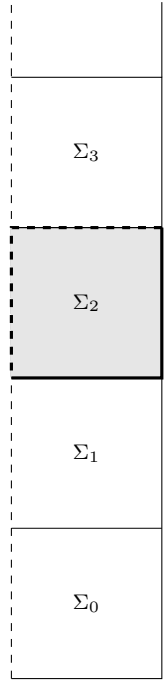
\begin{figure}[h]
    \begin{tikzpicture}[scale=2]
    \def\h{4}
	\def\s{2}

    \draw[thin] (0,0) -- (1,0) -- (1,\h) -- (0,\h);
    \draw[thin,dashed] (0,0) -- (0,\h+1/2);
    \draw[thin] (1,\h) -- (1,\h+1/2);

    \draw[fill=gray!20] (0,\s) -- (1,\s) -- (1,\s+1) -- (0,\s+1);
    \draw[very thick] (0,\s) -- (1,\s) -- (1,\s+1);
    \draw[very thick,dashed] (1,\s+1) -- (0,\s+1) -- (0,\s);

    \foreach \x in {1,2,3}
    {
      \draw[very thin] (0,\x) -- (1,\x);
    }

    \foreach \x in {0,1,2,3}
    {
      \node[black] at (1/2,\x+1/2) {\footnotesize $\Sigma_\x$};
    }
\end{tikzpicture}
    \caption{Partition of the strip $\Sigma$ into
      $\{\Sigma_k\}_{k\geq 0}$.}\label{fig:sigma_k}
\end{figure}

\subsection{Triangle sequences}
\label{sec:ts}

Let us now recall the definition of the \emph{triangle sequence}
associated to a point $(x, y)$ in $\triangle$ and certain results
concerning their digits from \cite{garr}. We start by setting
$d_{-2} \coloneqq 1$, $d_{-1}\coloneqq x$ and $d_0\coloneqq y$ and,
supposing that $d_{k-3}>d_{k-2}>d_{k-1}>0$, we recursively define
$\alpha_k\coloneqq\alpha_k(x, y)$ to be the non-negative
integer such that
\[
    d_{k-3} - d_{k-2} -\alpha_k d_{k-1}\geq0
\]
and
\[
    d_{k-3} - d_{k-2} -(\alpha_k+1) d_{k-1}<0.
\]
Then set $d_k\coloneqq d_{k-3} - d_{k-2} -\alpha_k d_{k-1}\in
\R^+$. If at any stage we find that $d_k=0$, the process stops. We
shall write $(x, y) = (\alpha_1,\,\alpha_2,\,\ldots)$ to denote the
triangle sequence of $(x,y)$. Another way of defining the triangle
sequence is to note that $\alpha_k(x,y)=m$ if and only if
$T^{k-1}(x,y)\in \triangle_{m}$, and the process stops if
$T^n(x,y) \in \Lambda$ for some $n\geq 1$. From this way of looking at
the triangle sequence digits, it immediately follows that if
$(x, y)= (\alpha_1,\,\alpha_2,\,\ldots)$, then
$T(x, y)= (\alpha_2,\,\alpha_3,\,\ldots)$. In other words, the
triangle map acts on triangle sequences as the shift map, exactly as
the Gauss map does for the continued fraction expansions. We also have
the following relation between the digits $\alpha_k$ and the first
passage time:
\begin{equation}\label{eq:digit_tau}
    \tau (T^{k-1}(x,y)) = 1 + \alpha_k(x, y).
\end{equation}

In \cite{garr}, the following results for the triangle sequence are
given.
\begin{itemize}
  \item If $(x, y)$ is a pair of rational numbers in
    $\Q^2\cap \trianglecl$, then the triangle sequence associated to
    $(x, y)$ is finite. However, the converse is not true:
    non-rational points can also have finite triangle sequences.
  \item Every infinite sequence of non-negative integers
    $(\alpha_1,\, \alpha_2,\, \ldots)$ has a pair $(x, y)\in\triangle$
    that has this sequence as its triangle sequence.
  \item If an integer $k$ appears infinitely often in a given sequence
    of integers, there is a unique pair $(x, y)\in\triangle$ that has
    this sequence as its triangle sequence.
\end{itemize}
Note that there are entire line segments with every point having
identical infinite triangle sequences. This is essentially due to the
fact that the refinements of the partition $\{\triangle_k\}_{k\geq0}$
with respect to the map $T$ do not have diameters shrinking to
$0$. Thus, whilst the triangle sequence can usefully be thought of as
a two-dimensional generalisation of the continued fraction expansion,
in certain respects it behaves rather differently. However, this
behaviour is not in contrast with the ergodicity of the map, since as
shown in \cite{nog} for Lebesgue almost every point the refinements of
the partition $\{\triangle_k\}_{k\geq0}$, along the triangle sequence
of the point, shrink to the point. In the language of multidimensional
continued fraction expansions (see \cite{brentjes}), this means that
the triangle sequence is \emph{weakly convergent} at Lebesgue almost
every point.

\section{A weak law of large numbers}
\label{sec:wlln}

For dynamical systems with an infinite invariant measure, in general
it is only possible to establish weaker statistical properties than
those for systems with an invariant probability measure.  For example,
if $(X, \mu, R)$ is a conservative and ergodic measure-preserving
system such that $\mu(X)=\infty$, then Birkhoff's Ergodic Theorem
becomes the weak statement that
\[
    \lim_{n\to\infty}\frac 1n \sum_{k=0}^{n-1} (f\circ R^k)(x) = 0,
\]
for $\mu$-almost every $x\in X$ and for all $f\in
L^1(X,\mu)$. Moreover, the exact asymptotic pointwise behaviour cannot
be recovered for all $f\in L^1(X,\mu)$ by changing the normalising
sequence, due to Aaronson's Ergodic Theorem \cite[Theorem
2.4.2]{aaronson:iet}, which basically states that for any sequence of
positive real numbers, the growth rate of the Birkhoff sums will be
either over- or under-estimated infinitely often.  Nevertheless, it is
possible to obtain distributional limit laws for the ergodic sums of
some classes of dynamical systems with an infinite invariant measure
(see \cite[Chapter 3]{aaronson:iet}).

A first step is to show that that the system $(\trianglecl, \mu, S)$ is pointwise dual ergodic,
which means that there exists a sequence
$(a_n(S))_{n\geq 0}$ such that
\[
  \lim_{n\rightarrow \infty}\frac{1}{a_n(S)} \sum_{k=0}^{n-1} (\PP^k
  f)(x,y) = \int_{\trianglecl} fd\mu
\]
for $\mu$-almost every $(x,y)\in \trianglecl$ and for all
$f\in L^1(\trianglecl,\mu)$, where $\PP$ is the transfer operator of
the system. We prove it for $(\trianglecl, \mu, S)$.

\begin{theorem}
  \label{thm-pde}
  The system $(\trianglecl, \mu, S)$ is pointwise dual ergodic, and the
  sequence $\left(a_n(S)\right)_{n\geq 0}$ satisfies\footnote{We say that $a_n\asymp b_n$ if and only if
    $a_n =O(b_n)$ and $b_n=O(a_n)$.} $a_n(S) \asymp \frac{n}{\log^2 n}$.
\end{theorem}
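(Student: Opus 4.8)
The plan is to establish pointwise dual ergodicity by passing through the jump transformation $T$ on $\Gamma_0$, which we know is ergodic with respect to the finite absolutely continuous measure of density $k(x,y)=\frac{12}{\pi^2 x(1+y)}$. The key input from infinite ergodic theory (see \cite[\S 3.8]{aaronson:iet}) is that, for a conservative ergodic measure-preserving system with a ``nice'' first-return (jump) set $Y=\Gamma_0$, the return-sequence $a_n(S)$ is governed by the tail behaviour of the return-time function $\tau$, i.e.\ by $\mu(\tau > n)$ (equivalently by the wandering-rate of the set of points that have not yet returned to $\Gamma_0$). Indeed, by Proposition~\ref{prop-mis-inv} the system $(\trianglecl,\mu,S)$ is conservative, ergodic and $\sigma$-finite, so the abstract machinery of Thaler--Zweim\"uller (Hopf/Darling--Kac type arguments, as presented in \cite{aaronson:iet,munday:iet}) applies once we control the tails.

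First I would compute, using the identifications from Section~\ref{sec:setting}, the set $\{\tau > n\}$ explicitly. Recall $\tau(x,y)=1+\min\{k\ge 0: S^k(x,y)\in\Gamma_0\}$ and that $S(\triangle_k)=\triangle_{k-1}$ for $k\ge 1$; hence $\{S^k\notin\Gamma_0 \text{ for } k=0,\dots,n-1\}$ is, up to $\mu$-null sets, the union $\bigcup_{k\ge n}\triangle_k$ (the points whose first triangle digit is at least $n$, by \eqref{eq:digit_tau}). So the relevant quantity is $\mu\bigl(\bigcup_{k\ge n}\triangle_k\bigr)=\sum_{k\ge n}\mu(\triangle_k)$ with $h(x,y)=\frac1{xy}$. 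It is cleaner to carry out this estimate in the strip coordinates of Section~\ref{sec:ess}: there $\triangle_k$ corresponds to the unit square $\Sigma_k=\{(u,v): k\le v<k+1\}$, the map $F$ acts on $\Pi_1$ by $(u,v)\mapsto(u,v-1)$, and $d\rho(u,v)=\frac{1}{1+uv}\,du\,dv$. Thus
\[
  \rho(\Sigma_k)=\int_0^1\!\!\int_k^{k+1}\frac{du\,dv}{1+uv}
  =\int_0^1\frac{1}{u}\log\!\frac{1+(k+1)u}{1+ku}\,du,
\]
and a routine asymptotic analysis of this integral (the logarithm behaves like $\frac{u}{1+ku}$ for large $k$, and the $u$-integration near $0$ produces the extra logarithmic factor) gives $\rho(\Sigma_k)\asymp \frac{\log k}{k^2}$, whence $\mu(\tau>n)=\sum_{k\ge n}\rho(\Sigma_k)\asymp \frac{\log n}{n}$. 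The technical lemmas for this are exactly what the paper defers to Appendices~\ref{sec:matrices} and~\ref{sec:varying-seq}.

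Next I would invoke the standard dictionary between the wandering rate and the return sequence. With $w_n:=\sum_{k=0}^{n-1}\mu(\tau>k)\asymp \sum_{k=1}^{n}\frac{\log k}{k}\asymp \log^2 n$, the general principle (Thaler's Dynkin--Lamperti / Karamata Tauberian argument, cf.\ \cite[Prop.~3.8.7]{aaronson:iet} and \cite{munday:iet}) yields that the system is pointwise dual ergodic with $a_n(S)\asymp \frac{n}{w_n}\asymp \frac{n}{\log^2 n}$. Concretely: the jump transformation $T$ of $S$ to $\Gamma_0$ has a finite invariant measure and is known to be ergodic \cite{nog}, so it is (trivially) pointwise dual ergodic with $a_n(T)\asymp n$; the ``tied-down'' renewal relation linking the occupation times of $S$ in $\Gamma_0$ to those of $T$, together with the regular variation of $w_n$ (it is $\asymp(\log n)^2$, which is slowly varying of index $0$), transfers this to $a_n(S)\asymp n/w_n$ by a uniform set $\Gamma_0$ argument. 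The main obstacle is the first step: obtaining the sharp two-sided bound $\rho(\Sigma_k)\asymp \frac{\log k}{k^2}$ — the extra logarithm (compared with the $\asymp k^{-2}$ one might naively guess, which would give $a_n\asymp n/\log n$ as for the Farey map) comes precisely from the $\frac1u\log$ integrand blowing up logarithmically as $u\to 0^+$, and getting the constant-free $\asymp$ cleanly requires splitting the $u$-integral at $u=1/k$ and estimating each piece, which is the content of the appendix computations. Once that asymptotic is in hand, the remainder is a black-box application of infinite ergodic theory.
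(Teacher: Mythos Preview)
Your proposal has a genuine structural gap. The step where you write that ergodicity of the jump map $T$ on $\Gamma_0$ ``transfers'' pointwise dual ergodicity to $S$ via a ``uniform set $\Gamma_0$ argument'' is exactly the missing ingredient: knowing that the induced (jump) map is ergodic with a finite a.c.\ invariant measure is \emph{not} enough to conclude that $\Gamma_0$ is a uniform (or Darling--Kac) set for $S$. The abstract machinery you invoke requires a strong mixing condition on the induced system --- typically $\psi$-mixing (continued-fraction mixing) with respect to a generating partition on which the return time is measurable. The paper explicitly remarks (end of Appendix~\ref{sec:varying-seq}) that it is \emph{not known} whether $\Gamma_0$ is a ``good'' set in this sense; this is precisely why it does not induce on $\Gamma_0$. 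Instead it takes the strictly smaller set $A=\{(x,y)\in\Gamma_0: S(x,y)\in\Gamma_0\}$, and the bulk of the proof consists in verifying Nakada's conditions (h1)--(h9) for the induced map $V=S^{\varphi_A}$ on $A$ (the key inputs being the diameter and distortion estimates for the local inverses, Propositions~\ref{prop-estimate-dpsi} and~\ref{prop-det}, proved in Appendix~\ref{sec:matrices}). These yield $\psi$-mixing of the induced system via Proposition~\ref{prop:fs-cfm}, and only then does Proposition~\ref{prop:cfm-pde} give pointwise dual ergodicity together with $a_n(S)\asymp n/w_n(A)$.

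There is also a computational slip in your rate estimate. Your claimed asymptotic $\rho(\Sigma_k)\asymp\frac{\log k}{k^2}$ is off by a factor of $k$: since $\log\frac{1+(k+1)u}{1+ku}\sim\frac{u}{1+ku}$ uniformly in $u\in(0,1]$, one gets $\rho(\Sigma_k)\asymp\int_0^1\frac{du}{1+ku}=\frac{\log(1+k)}{k}\asymp\frac{\log k}{k}$. Consequently your ``tail'' $\mu(\tau>n)=\sum_{k\ge n}\rho(\Sigma_k)$ is in fact infinite (as it must be, being the $\mu$-measure of a neighbourhood of the segment $\Lambda$ of indifferent fixed points). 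The quantity that enters the wandering rate is not this tail but rather $\mu(\Gamma_0\cap\{\varphi_{\Gamma_0}>k\})=\mu(\triangle_k)=\rho(\Sigma_k)$ itself, giving $w_n(\Gamma_0)=\sum_{k=0}^{n-1}\rho(\Sigma_k)\asymp\log^2 n$. You land on the correct order for $w_n$ but via a confused route, and in any case this computation is only useful \emph{after} one has established that some finite-measure set with this wandering rate satisfies the $\psi$-mixing hypothesis --- which is the real content of the paper's proof.
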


Distributional limit laws follow from pointwise dual ergodicity under the assumption that the sequence $(a_n(S))_{n\geq 0}$ is regularly varying. We recall that a sequence $(a_n)_{n\geq 0}$ is said to be
\emph{regularly varying of index $\alpha\in \R$} if for all $c>0$ we
have that
\begin{equation}\label{def:reg-var}
    \lim_{n\to \infty} \frac{a_{\lfloor cn\rfloor}}{a_n} = c^\alpha.
\end{equation}
If $\alpha=0$ the sequence is called \emph{slowly varying}.

\begin{theorem}[Weak law of large numbers]
  \label{thm-wlln}
  Let $\Prob$ be a probability measure on $\trianglecl$, absolutely
  continuous with respect to the Lebesgue measure. If the sequence $(a_n(S))_{n\geq 0}$ in Theorem \ref{thm-pde} is regularly varying of index $\alpha=1$, then for all
  $f\in L^1(\trianglecl,\mu)$ and for all $\eps>0$
  \[
    \lim_{n\rightarrow \infty} \Prob \left( \left| \frac{1} {a_n(S)}
        \sum_{k=0}^{n-1}(f\circ S^k)(x,y) - \int_{\trianglecl} f
        d\mu \right|> \eps\right) =0.
  \]
\end{theorem}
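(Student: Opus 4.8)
The plan is to obtain the statement as an instance of the Darling--Kac distributional limit theorem, in the form valid for pointwise dual ergodic transformations with a regularly varying return sequence; this becomes available to us once we invoke Theorem~\ref{thm-pde} together with the standing regular-variation hypothesis. Recall from infinite ergodic theory (see \cite[Chapter~3]{aaronson:iet} and \cite{munday:iet}) that if a conservative ergodic measure-preserving transformation is pointwise dual ergodic with return sequence $(a_n)$ regularly varying of index $\alpha\in[0,1]$, then for every $f\in L^1(\mu)$ with $f\ge 0$ one has the \emph{strong distributional convergence}
\[
    \frac{1}{a_n}\sum_{k=0}^{n-1}(f\circ S^k)\ \xrightarrow{\;\mathcal{L}\;}\ \left(\int_{\trianglecl}f\,d\mu\right)\,\mathfrak{M}_\alpha ,
\]
where $\mathfrak{M}_\alpha$ is the normalised Mittag--Leffler distribution of order $\alpha$ and \emph{strong} means that convergence in distribution holds with respect to every probability measure absolutely continuous with respect to $\mu$. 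The crucial point is that $\mathfrak{M}_1=\delta_1$: for index $\alpha=1$ the limiting law is degenerate, equal to the constant $\int_{\trianglecl}f\,d\mu$, and convergence in distribution to a constant coincides with convergence in probability.

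To apply this I would first verify the hypotheses. By Proposition~\ref{prop-mis-inv} the system $(\trianglecl,\mu,S)$ is conservative, ergodic and $\mu$-preserving, and by Theorem~\ref{thm-pde} it is pointwise dual ergodic with $a_n(S)\asymp\frac{n}{\log^2 n}$; the hypothesis of the theorem provides that $(a_n(S))$ is regularly varying, and its index is then forced to be $\alpha=1$ by this growth rate. For the reference measure: the invariant density $h(x,y)=\frac{1}{xy}$ is finite and strictly positive Lebesgue-almost everywhere on $\trianglecl$, so $\mu$ and the Lebesgue measure $m$ are mutually absolutely continuous; hence any probability $\Prob\ll m$ satisfies $\Prob\ll\mu$, and the strong distributional convergence above holds with $\Prob$ as reference measure. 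Specialising to $\alpha=1$, we obtain for every $f\ge 0$ in $L^1(\trianglecl,\mu)$
\[
    \frac{1}{a_n(S)}\sum_{k=0}^{n-1}(f\circ S^k)\ \longrightarrow\ \int_{\trianglecl}f\,d\mu \qquad\text{in }\Prob\text{-probability.}
\]

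It then remains to remove the sign restriction. Given $f\in L^1(\trianglecl,\mu)$, write $f=f^+-f^-$ with $f^\pm\ge 0$ and $f^\pm\in L^1(\trianglecl,\mu)$; the displayed convergence applies to each of $f^+$ and $f^-$ (trivially if either vanishes identically). Since the Birkhoff sums are linear in $f$ and convergence in probability is stable under subtraction, we conclude that
\[
    \frac{1}{a_n(S)}\sum_{k=0}^{n-1}(f\circ S^k)\ \longrightarrow\ \int_{\trianglecl}f^+\,d\mu-\int_{\trianglecl}f^-\,d\mu=\int_{\trianglecl}f\,d\mu
\]
in $\Prob$-probability, which, upon unwinding the definition of convergence in probability, is exactly the assertion of the theorem.

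The genuine difficulty is entirely upstream: it lies in the proof of pointwise dual ergodicity and the identification of $a_n(S)$ (Theorem~\ref{thm-pde}, carried out via the transfer-operator estimates of the appendices), and, beyond that, in deciding in concrete situations whether $a_n(S)$ is actually regularly varying --- which is precisely why that is taken here as a hypothesis. Granting Theorem~\ref{thm-pde} and regular variation, the present theorem is a direct application of Darling--Kac, the only point deserving care being that one must quote its \emph{strong} form, relative to arbitrary absolutely continuous probabilities, since $\mu(\trianglecl)=+\infty$ while the statement concerns the genuine probability measure $\Prob$.
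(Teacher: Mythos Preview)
Your proof is correct and follows exactly the paper's approach: invoke Theorem~\ref{thm-pde} together with the regular-variation hypothesis, apply the Darling--Kac theorem (\cite[Corollary~3.7.3]{aaronson:iet}) to get strong distributional convergence to $(\int f\,d\mu)\,\mathcal{M}_1$, and use that $\mathcal{M}_1$ is a.s.\ constant so that distributional convergence upgrades to convergence in probability. If anything you are more careful than the paper, which does not spell out the equivalence $\Prob\ll m \Leftrightarrow \Prob\ll\mu$ or the reduction to non-negative $f$.
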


\begin{corollary}[Khinchin weak law]
  \label{cor-kuzdig}
  Let $\Prob$ be a probability measure on $\trianglecl$, absolutely
  continuous with respect to the Lebesgue measure. If the sequence $(a_n(S))_{n\geq 0}$ in Theorem \ref{thm-pde} is regularly varying of index $\alpha=1$, then there exists a
  sequence $\left(b_n\right)_{n\geq 0}$ such that for all $\eps>0$
  \[
      \lim_{n\rightarrow \infty} \Prob \left( \left|\frac{1}{b_n}
          \sum_{k=0}^{n-1}\alpha_k(x,y) - 1 \right|> \eps\right) =0,
  \]
  and $b_n \asymp n\log^2 n$. In particular, for $m$-almost every
  $(x,y)\in \trianglecl$
  \[
      \lim_{n\rightarrow \infty} \frac 1n \sum_{k=0}^{n-1}
      \alpha_k(x,y) = +\infty.
  \]
\end{corollary}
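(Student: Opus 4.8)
The plan is to deduce the Khinchin-type weak law for the digits $\alpha_k$ from the weak law of large numbers of Theorem~\ref{thm-wlln}, exactly as in the one-dimensional case where Khinchin's weak law for continued fraction digits is obtained from the Farey map. The link between the two statements is the identity \eqref{eq:digit_tau}, which says $\tau(T^{k-1}(x,y)) = 1 + \alpha_k(x,y)$, together with the fact (recalled in Section~\ref{sec:setting}) that $T$ is the jump transformation of $S$ over $\Gamma_0$ with first-passage time $\tau$. Concretely, if $N_n(x,y)$ denotes the number of visits of the $S$-orbit of $(x,y)$ to $\Gamma_0$ up to time $n$, i.e. $N_n = \sum_{k=0}^{n-1}\ind_{\Gamma_0}\circ S^k$, then by definition of the jump transformation the $S$-orbit segment of length $n$ is cut by the successive passages into $\Gamma_0$ into $N_n(x,y)$ full excursions of lengths $\tau, \tau\circ T, \dots$, so that
\[
    \sum_{j=0}^{N_n-1} \tau(T^j(x,y)) \le n < \sum_{j=0}^{N_n} \tau(T^j(x,y)),
\]
and hence $\sum_{j=0}^{N_n-1}(1+\alpha_{j+1}(x,y)) \le n$, with the reverse inequality off by one term. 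Equivalently $N_n + \sum_{j=1}^{N_n}\alpha_j = n + O(\tau(T^{N_n}))$.

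First I would apply Theorem~\ref{thm-wlln} with $f = \ind_{\Gamma_0}$. Since $\mu(\Gamma_0) = \int_{\Gamma_0} \frac{1}{xy}\,dx\,dy$ is a finite positive constant (one computes it explicitly; it is the normalising constant appearing implicitly in the infinite-measure setup), $f \in L^1(\trianglecl,\mu)$ and the theorem gives
\[
    \frac{N_n(x,y)}{a_n(S)} \xrightarrow{\ \Prob\ } \mu(\Gamma_0)
\]
in probability with respect to any Lebesgue-absolutely-continuous $\Prob$. Next I would use the excursion identity above. Writing $A_n := \sum_{k=0}^{n-1}\alpha_k$ (the quantity of interest) and noting that the sum $\sum_{j=1}^{N_n}\alpha_j$ differs from $A_{N_n}$ only by index bookkeeping, the identity gives $A_{N_n} = n - N_n + (\text{boundary term})$; since $N_n = o(n)$ with high probability (because $a_n(S)\asymp n/\log^2 n = o(n)$) and the boundary term $\tau(T^{N_n})=1+\alpha_{N_n+1}$ is, by a standard Borel–Cantelli / Markov-inequality argument using that $\tau$ is $\mu$-a.e. finite, negligible compared with $n$ in probability, we obtain $A_{N_n}/n \to 1$ in probability. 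Finally I would invert the time change: set $b_n$ so that $b_{N} \asymp N\log^2 N$, which is the asymptotic inverse of $N \mapsto a_N(S)\asymp N/\log^2 N$; here the regular-variation hypothesis on $a_n(S)$ is what guarantees that a genuine asymptotic inverse sequence $b_n$ exists and is itself regularly varying (of index $1$), so that $b_{N_n}\asymp b_{a_n(S)/\mu(\Gamma_0)}$ behaves like $n$ up to constants. Composing, $\frac{1}{b_n}\sum_{k=0}^{n-1}\alpha_k \to 1$ in probability, and a further check of constants (absorbing $\mu(\Gamma_0)$ into the choice of $b_n$) pins the limit to exactly $1$. The order of magnitude $b_n\asymp n\log^2 n$ then follows from $b_n$ being the inverse of $a_n(S)\asymp n/\log^2 n$.

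For the last assertion I would argue that along the subsequence there is in fact a pointwise (not merely in-probability) statement available. Since $\alpha_k\ge 0$, the sequence $\frac1n\sum_{k=0}^{n-1}\alpha_k$ is, after suitable manipulation, related to $\frac{A_{N_n}}{N_n}\cdot\frac{N_n}{n}$; the Birkhoff/Hopf ratio ergodic theorem applied to $T$ (which preserves a finite measure and is ergodic by \cite{nog}) gives $\frac1N A_N = \frac1N\sum_{j=0}^{N-1}\alpha_j \to +\infty$ for $m$-a.e. $(x,y)$, because $\alpha_1 = \tau\circ(\text{id}) - 1$ is \emph{not} integrable with respect to the finite $T$-invariant measure with density $k(x,y)=\frac{12}{\pi^2 x(1+y)}$ — indeed $\int \alpha_1\,d(\text{that measure}) = \sum_k k\cdot(\text{measure of }\triangle_k) = \infty$ since the measures of $\triangle_k$ decay only polynomially. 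Combined with $N_n\to\infty$ $m$-a.e. (conservativity of $S$, Proposition~\ref{prop-mis-inv}) and the trivial bound $\frac1n\sum_{k=0}^{n-1}\alpha_k \ge \frac1n\sum_{j=0}^{N_n-1}\alpha_j = \frac{N_n}{n}\cdot\frac{A_{N_n}}{N_n}$, where $\frac{N_n}{n}$ is bounded below along the orbit by the Hopf ratio theorem for $\ind_{\Gamma_0}$ under $S$, we conclude $\frac1n\sum_{k=0}^{n-1}\alpha_k\to+\infty$ $m$-a.e.

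I expect the main obstacle to be the \textbf{control of the boundary excursion term and the time-change inversion simultaneously in probability}: turning the deterministic sandwich inequality into a clean limit requires knowing that $N_n/a_n(S)$ concentrates \emph{and} that $\tau(T^{N_n(x,y)})/n\to 0$ in probability, and the latter is delicate because $N_n$ is itself random — one cannot directly invoke a Borel–Cantelli statement at a random time. The standard fix is to run the estimate the other way: bound $n$ between two \emph{deterministic}-index partial sums of $\tau$ evaluated at $T$-iterates, apply the weak law of large numbers (or the Hopf ratio theorem) to those deterministic-index sums, and only then read off the behaviour of $N_n$; carrying this through while keeping the regular-variation hypothesis doing exactly the work it needs to (existence and regular variation of the inverse sequence $b_n$) is the technical heart of the argument. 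Everything else — computing $\mu(\Gamma_0)$, checking $\int\alpha_1\,dk = \infty$, the elementary inequalities — is routine.
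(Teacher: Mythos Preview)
Your approach is the same as the paper's: apply Theorem~\ref{thm-wlln} to $f=\ind_{\triangle_0}=\ind_{\Gamma_0}$ and invoke the standard duality between the Birkhoff sums of this indicator and the return-time function via \eqref{eq:digit_tau}; the paper's proof consists of exactly this, citing the duality as standard (referring to \cite{zwei}) rather than spelling out the sandwich inequality and the regular-variation inversion as you do. One minor correction: your final ``Combined with'' sentence is both unnecessary and wrong --- in fact $N_n/n\to 0$ $\mu$-a.e.\ by the infinite-measure Birkhoff theorem, not bounded below --- but this is harmless, since your preceding Birkhoff-for-$T$ argument (ergodic finite invariant measure, $\alpha_1\ge 0$ non-integrable because $\sum_k k\cdot\nu(\triangle_k)=\infty$) already proves the a.e.\ divergence directly, the index $n$ in the statement being $T$-time rather than $S$-time.
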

\begin{proof}
    Applying Theorem \ref{thm-wlln} to the function
    $f=\ind_{\triangle_0}$, the proof follows from a standard duality
    argument between Birkhoff sums and the return time function (see
    for example \cite[pag. 22]{zwei}), which is related to the
    triangle sequence by \eqref{eq:digit_tau}.
\end{proof}

Given Theorem \ref{thm-pde}, the proof of Theorem \ref{thm-wlln} is
then completed by appealing to the Darling-Kac theorem, which implies
that the distributional limit of the Birkhoff sums
$\frac{1}{a_n(S)} \sum_{k=0}^{n-1} f\circ S^k$ is
$\left(\int_{\trianglecl} f d\mu\right)\mathcal{M}_1$ for all
$f\in L^1(\trianglecl,\mu)$, where $\mathcal{M}_1$ is the random
variable with normalised Mittag-Leffler distribution of order
$\alpha=1$, (see \cite[Corollary 3.7.3]{aaronson:iet}). In particular,
since $\mathcal{M}_1$ is constant, the Birkhoff sums converge in
probability and Theorem \ref{thm-wlln} is proved.

\subsection{Proof of Theorem \ref{thm-pde}}
\label{subsec:proof_pde}

We first recall the results we use to prove that the system
$(\trianglecl,\mu,S)$ is pointwise dual ergodic.

\begin{definition}
  \label{def:cfm}
  Let $V$ be a measure-preserving transformation of the probability
  space $(\Omega,\AA,\nu)$ and let $\CC \subseteq \BB$ be a countable
  measurable partition which is generating for $V$. Let us denote by
  $\CC^k$, $k\geq 1$, the iterated partitions, that is
  $\CC^k \coloneqq \bigvee_{j=0}^{k-1}\, V^{-j}\CC$. The system
  $(\Omega,\AA,\nu,V,\CC)$ is said to be \emph{$\psi$-mixing} if the
  sequence
  \[
      \psi_n\coloneqq \sup_{C\in \CC^k\atop B\in \AA,\,\nu(B)>0}
      \frac{\left| \nu\left(C\cap V^{-(k+n)} B\right) -
          \nu(C)\nu(B)\right|}{\nu(C)\nu(B)}
  \]
  satisfies $\psi_n \rightarrow 0$ as $n\rightarrow \infty$.
\end{definition}

\begin{remark}
    The property defined above as $\psi$-mixing is often referred to
    as {\em continued fraction mixing} since in particular it is
    satisfied by the Gauss map, see \cite[Theorem 5.2.7]{munday:iet}.
\end{remark}

\begin{proposition}[\cite{aaronson:iet}, Lemma~3.7.4 and
    Proposition~3.7.5]
  \label{prop:cfm-pde}
  Let $R$ be a conservative, ergodic measure-preserving transformation
  of the space $(X,\BB,\mu)$, and let $A\in \BB$ with
  $0<\mu(A)<+\infty$. Define
  \begin{enumerate}[label={\upshape(\roman*)},wide = 0pt,leftmargin=*]
    \item $\varphi_A(x)\coloneqq \inf\{j\ge 1 \,:\, R^j(x)\in A\}$,
      the first return time function to $A$, which is finite for
      $\mu$-almost every $x\in A$;
    \item the induced map\footnote{The induced map $R_A$ is an ergodic
        measure-preserving transformation of the probability space
        $(A,\BB\cap A,\mu|_A)$. See, for instance,
        \cite[Proposition~1.5.2 and~1.5.3]{aaronson:iet}.}
      $R_A:A\rightarrow A$ as $R_A(x)\coloneqq R^{\varphi_A(x)}(x)$
      for $\mu$-almost every $x\in A$.
  \end{enumerate}
  Let $\CC\subseteq \BB\cap A$ be a countable measurable partition
  which generates $\BB$ under $R_A$, such that $\varphi_A$ is
  $\CC$-measurable. If the induced system
  $(A,\BB\cap A, R_A, \mu|_A, \CC)$ is $\psi$-mixing, then the
  original system $(X,\mu,R)$ is pointwise dual ergodic.
\end{proposition}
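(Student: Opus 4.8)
\textbf{Proof proposal for Proposition~\ref{prop:cfm-pde}.}
The plan is to prove this abstract transfer principle by combining three ingredients that are all standard in infinite ergodic theory but need to be assembled carefully: (i) the fact that $\psi$-mixing of the induced system forces the return-time function $\varphi_A$ to have tails that are \emph{regularly varying up to slowly varying error}, or at least that the partial sums of the return times obey a law of large numbers in a suitable weak sense; (ii) Thaler--Zweimüller-type results relating the wandering rate of the original system to the expected return time truncated at level $n$; and (iii) the general criterion (Aaronson, \cite[\S3.7]{aaronson:iet}) that pointwise dual ergodicity is \emph{inherited} from the induced system as soon as the induced system is itself pointwise dual ergodic with a nice return sequence and the return-time function is well-behaved. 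Since the induced system is a probability-preserving system, pointwise dual ergodicity there is automatic (it is just Hopf's ratio ergodic theorem, with $a_n = n$), so the content is entirely in transferring this to $(X,\mu,R)$.

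First I would set up notation: write $S_n^A\varphi_A = \sum_{j=0}^{n-1} \varphi_A\circ R_A^j$ for the Birkhoff sums of the return time over the induced map, so that $S_n^A\varphi_A(x)$ is the time of the $n$-th return to $A$. The key observation is Kac's formula together with the $\psi$-mixing hypothesis: because $\varphi_A$ is $\CC$-measurable and $\CC$ generates $\BB\cap A$ under $R_A$, and the induced system is $\psi$-mixing with respect to $\CC$, one obtains that $\varphi_A$ is measurable with respect to the asymptotic $\sigma$-algebra in a strong quantitative way — this is precisely the hypothesis under which \cite[Lemma~3.7.4]{aaronson:iet} applies. That lemma (which I would quote or reprove) gives that for any $A_0\in\CC$ with $\mu(A_0)>0$ and any measurable $B$,
\[
  \mu\!\left(A_0\cap R_A^{-n}B\right) = \mu(A_0)\,\mu|_A(B)\,(1+o(1))
\]
uniformly in $B$, and, crucially, that this persists when one replaces $R_A^{-n}$ by $R^{-m}$ on the original space via the identity $\{R^m x\in A_0\} = \{R_A^{\ell}x\in A_0,\ S_\ell^A\varphi_A(x)=m\}$ for the appropriate $\ell = \ell(m,x)$. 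Summing this over $m\le N$ and over $\ell$, the $\psi$-mixing decorrelation lets us replace the joint distribution of $(R_A^\ell x, S_\ell^A\varphi_A(x))$ by a product, reducing everything to the asymptotics of a renewal sequence built from the i.i.d.-like increments $\varphi_A$.

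Then the core computation is the renewal-theoretic one: define $u_N := \sum_{m=0}^{N-1}\mu(A_0\cap R^{-m}A_0)/\mu(A_0)^2$; by the reduction above this is asymptotic to $\sum_{\ell\ge 0}\Prob(S_\ell^A\varphi_A \le N)$ (a renewal function), and by the Erickson--Feller renewal theorem for heavy-tailed increments this has the same asymptotics as $a_N$, the return sequence we are constructing, namely $a_N \sim N / w_N$ where $w_N = \int_A \min(\varphi_A, N)\,d\mu|_A$ is the truncated first moment — equivalently, the wandering rate. Finally I would invoke \cite[Proposition~3.7.5]{aaronson:iet}: once the "diagonal" sums $\sum_{m=0}^{N-1}\widehat{R}^m \ind_{A_0}$ are shown to converge, after normalization by $a_N$, to $\mu(A_0)$ uniformly on $A_0$ (which is what the above delivers), a standard approximation argument — approximate an arbitrary $f\in L^1(\mu)$ from below by finite linear combinations of indicators of partition elements $\CC^k$, use the Hopf decomposition and conservativity to control the remainder — upgrades this to $\frac{1}{a_N}\sum_{m=0}^{N-1}\widehat{R}^m f \to \int_X f\,d\mu$ a.e., which is exactly pointwise dual ergodicity.

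The main obstacle, I expect, is \emph{step one}: making rigorous the claim that $\psi$-mixing of the induced system transfers the product-form asymptotics across the time-change by the (correlated, unbounded) return-time cocycle $S_\ell^A\varphi_A$. The naive estimate loses too much because $\varphi_A$ is not bounded; the fix, which is the heart of the argument in \cite{aaronson:iet}, is to exploit that $\psi$-mixing is a \emph{uniform} mixing condition — the error is multiplicative in $\nu(C)\nu(B)$ — so that one can condition on the cylinder of $\CC^\ell$ determining both $R_A^\ell x$ and the value of $S_\ell^A\varphi_A(x)$ simultaneously, and only then apply decorrelation for the "future" event $R^{-n}B$. Getting the summation over $\ell$ and $m$ to converge requires a careful truncation: split $\ell \le K$ and $\ell > K$, use $\psi$-mixing on the former and conservativity/ergodicity on the latter, then let $K\to\infty$ after $N\to\infty$. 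Once that bookkeeping is done, the renewal-theoretic asymptotics and the $L^1$ approximation are routine, and the proposition follows.
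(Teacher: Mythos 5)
This proposition is not proved in the paper at all: it is quoted verbatim from Aaronson's book (Lemma~3.7.4 and Proposition~3.7.5 of \cite{aaronson:iet}) and used as a black box, so there is no in-paper argument to compare yours against. Judged on its own terms, your sketch correctly identifies the skeleton of Aaronson's argument: the real content is to show that $A$ is a uniform (Darling--Kac) set, i.e.\ that $\frac{1}{a_n}\sum_{k=0}^{n-1}\widehat{R}^k\ind_A\to\mu(A)$ uniformly a.e.\ on $A$, by decomposing the occupation time of $A$ along the return-time cocycle $S^A_\ell\varphi_A$ and using the \emph{uniform, multiplicative} nature of the $\psi$-mixing bound (together with $\CC$-measurability of $\varphi_A$) to decouple the position $R_A^\ell x$ from the event $\{S^A_\ell\varphi_A\le n\}$; once that single uniform convergence is in hand, extending to all of $L^1(\mu)$ is the standard ratio-ergodic-theorem/approximation step you describe.

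Two caveats. First, the step you flag as ``the main obstacle'' — making the decoupling across the unbounded, correlated time change rigorous — is precisely the content of Aaronson's Lemma~3.7.4/Proposition~3.7.5, and your proposal leaves it as a plan rather than a proof; as written this is a roadmap, not an argument. Second, parts of your outline import machinery that is not needed for the statement being proved: the proposition asserts only pointwise dual ergodicity, for which no regular variation of the tails of $\varphi_A$ and no Erickson--Feller renewal theorem is required (those enter only when one wants the precise asymptotics of $a_n(R)$, which this paper handles separately via the wandering rate in Appendix~B and, notably, cannot fully resolve — see Remark~\ref{rem:slowly}). Claiming in ingredient (i) that $\psi$-mixing ``forces'' regularly varying tails is false and should be dropped; the correct identification $a_n\asymp n/w_n(A)$ needs no such hypothesis.
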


\noindent To prove the pointwise dual ergodicity of
$(\trianglecl,\mu,S)$ it is thus enough to find a set
$A\subseteq \trianglecl$ of finite positive measure that satisfies the
assumptions of Proposition~\ref{prop:cfm-pde}. The key point of the
previous result is to prove that the induced system is
$\psi$-mixing. To this end, we exploit the properties of the fibred
systems introduced by Schweiger in \cite{schw} and proved by Nakada to
be $\psi$-mixing under some additional conditions
\cite[Theorem~2]{nakada}.

\begin{definition}
    Let $A$ be a compact and connected subset of $\R^d$, with the
    Borel $\sigma$-algebra $\BB$ and let $m$ denote the
    $d$-dimensional normalised Lebesgue measure on $A$. Let $V$ be a
    measurable map of $A$ onto itself. The pair $(A,V)$ is called a
    \emph{fibred system} if it satisfies the following properties.
    \begin{itemize}
      \item[(h1)] There exists a finite or countable measurable
        partition $\CC=\{C_i\}_{i\in \II}$ of $A$ such that the
        restriction of $V$ to $C_i$ is injective for all $i\in \II$.
      \item[(h2)] The map $V$ is differentiable\footnote{In
          \cite{schw} it is only assumed that $V$ is measurable. We
          assume differentiability to simplify the approach to the
          system $(\trianglecl,\mu,S)$.} and non-singular.
    \end{itemize}
    For $i\in \II$, we denote by $\psi_i$ the inverse of the
    restriction $V|_{C_i}$. The cylinder sets of the iterated
    partition $\CC^n = \bigvee_{j=0}^{n-1}\, V^{-j} \CC$ are defined
    inductively to be
    \[
        C_{i_1,\,\dots,\,i_n} = C_{i_1} \cap V^{-1}
        C_{i_2,\,\dots,\,i_n},
    \]
    and we denote by $\psi_{i_1,\,\dots,\,i_n}$ the local inverse of
    $V^n$ restricted to $C_{i_1,\,\dots,\,i_n}$. Note that
    $\psi_{i_1,\,\ldots,\,i_n} = \psi_{i_1}\circ \dots \circ
    \psi_{i_n}$. In order to state the result about the $\psi$-mixing
    property of fibred systems, we introduce the following further
    conditions.
    \begin{itemize}
      \item[(h3)] There exists a sequence $(\sigma(n))_{n\geq 0}$ with
        $\sigma(n)\rightarrow 0$ as $n\rightarrow \infty$ and such
        that
        \[
            \sup_{(i_1,\,\dots,\,i_n)\in \II^n}\,\diam
            C_{i_1,\,\dots,\,i_n}\le \sigma(n).
        \]
      \item[(h4)] There exist a finite number of measurable subsets
        $U_1,\,\ldots,\,U_N$ of $A$ such that for any cylinder
        $C_{i_1,\,\dots,\,i_n}$ of positive measure, there exists
        $U_j$ with $1\le j\le N$ such that $V^n(C_{i_1,\,\dots,\,i_n})
        = U_j$ up to measure-zero sets.
      \item[(h5)] There exists a constant $\lambda\ge 1$ such that
        \[
            \esssup_{V^n(C_{i_1,\,\dots,\,i_n})} |J\psi_{i_1,\,\dots,\,i_n}|
            \le \lambda\,\essinf_{V^n(C_{i_1,\,\dots,\,i_n})}
            |J\psi_{i_1,\,\dots,\,i_n}|
        \]
        where $J\psi_{i_1,\dots,i_n}$ denotes the Jacobian determinant
        of $\psi_{i_1,\,\dots,\,i_n}$.
      \item[(h6)] For any $1\le j\le N$, $U_j$ contains a proper
        cylinder.
      \item[(h7)] There is a constant $r_1 >0$ such that
        \[
            \left|J\psi_{i_1,\,\dots,\,i_n} (p_1) - J\psi_{i_1,\,\dots,\,i_n}
              (p_2) \right| \le r_1 m(C_{i_1,\,\dots,\,i_n}) \| p_1 - p_2\|
        \]
        for any $p_1,p_2 \in U_j$ and all $j$.
      \item[(h8)] There is a constant $r_2 >0$ such that
        \[
            \left\| \psi_{i_1,\,\dots,\,i_n} (p_1) - \psi_{i_1,\,\dots,\,i_n}
              (p_2) \right\| \le r_2 \sigma(n)\| p_1 - p_2\|
        \]
        for any $p_1,p_2 \in U_j$ and all $j$.
      \item[(h9)] Let $\mathcal F$ be a finite partition generated by
        $U_1,\,\dots,\,U_N$ and denote by ${\mathcal F}^c _m$ the
        cylinders in $\CC^m$ that are not contained in any element of
        $\mathcal F$. Then, as $m\rightarrow \infty$
        \[
            \gamma(m) \coloneqq \sum_{C(i_1,\,\dots,\,i_m) \in {\mathcal
                F}^c _m}\, m(C(i_1,\,\dots,\,i_m)) \rightarrow 0.
        \]
    \end{itemize}
\end{definition}

\begin{proposition}[\cite{nakada}, Theorem 2]
    \label{prop:fs-cfm}
    A system $(A,\BB,V,\CC)$ satisfying \textnormal{(h1)-(h9)} admits
    an invariant probability measure $\nu$ and is $\psi$-mixing.
\end{proposition}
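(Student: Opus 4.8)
The plan is to follow the classical Gauss--Kuzmin--L\'evy strategy, adapted to fibred systems: analyse the transfer operator of $V$ on a cone of mildly varying densities, extract an invariant density by compactness, and then upgrade the contraction of that operator on the cone to the quantitative estimate defining $\psi$-mixing in Definition~\ref{def:cfm}.

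First I would introduce the transfer (Perron--Frobenius) operator $\mathcal L$ of $V$ with respect to the normalised Lebesgue measure $m$, which is well defined on $L^1(A,m)$ by (h1) and (h2): $\mathcal L f = \sum_{i\in\II}|J\psi_i|\,(f\circ\psi_i)$, and by the chain rule $\mathcal L^n f = \sum_{(i_1,\dots,i_n)\in\II^n}|J\psi_{i_1,\dots,i_n}|\,(f\circ\psi_{i_1,\dots,i_n})$, each summand supported on $V^n(C_{i_1,\dots,i_n})$, which by (h4) is one of the finitely many sets $U_1,\dots,U_N$ up to null sets. Conditions (h5) and (h7) give bounded distortion on each branch, $\esssup/\essinf\le\lambda$ and Lipschitz constant at most $r_1 m(C_{i_1,\dots,i_n})$; summing over the cylinders of level $n$ contained in a single atom of the finite partition $\mathcal F$ generated by $U_1,\dots,U_N$ accounts, by (h9), for all but $m$-mass $\gamma(n)\to0$, and each $U_j$ has positive measure since it contains a proper cylinder by (h6). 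Hence the restriction of $\mathcal L^n\mathbf 1$ to each $U_j$ is, for large $n$, bounded above and below by positive constants and Lipschitz with a controlled constant, modulo an error of size $\gamma(n)$.

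Next I would fix a cone $\mathcal K$ of nonnegative densities bounded away from $0$ and $\infty$ satisfying a uniform Lipschitz (or log-Lipschitz) bound, and verify using (h5), (h7), (h8) that some fixed iterate $\mathcal L^n$ maps $\mathcal K$ into a subset of finite Birkhoff (Hilbert projective) diameter. This is the step in which the finite-range structure (h4)+(h6) does the essential work: it yields the uniform lower bound on the overlap of branch images needed for a Doeblin-type, hence strictly contracting, estimate, while (h8) together with (h3) makes the oscillation of $\mathcal L^n g$ over each $U_j$ of order $\sigma(n)$ and (h9) keeps the contribution of the cylinders not subordinate to $\mathcal F$ of order $\gamma(n)$. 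Ces\`aro-averaging the $\mathcal L^k\mathbf 1$ and applying Arzel\`a--Ascoli produces a fixed point $h$ of $\mathcal L$; normalising, $\nu\coloneqq h\,dm$ is an invariant probability measure, and the cone contraction gives its uniqueness, hence ergodicity, together with a rate $\theta_n\to0$ with $\|\mathcal L^n g - h\|_\infty\le\theta_n$ uniformly over $g\in\mathcal K$ of unit integral.

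Finally, for $\psi$-mixing with $\nu$ as above, fix $C\in\CC^k$ with $\nu(C)>0$ and $B\in\BB$ with $\nu(B)>0$, and let $\psi_C$ be the inverse of $V^k|_C$. Changing variables and using the duality $\int(\phi\circ V^n)\,w\,dm=\int\phi\,(\mathcal L^n w)\,dm$,
\[
  \nu\!\left(C\cap V^{-(k+n)}B\right)=\nu(C)\int_B \mathcal L^n g_C\,dm,\qquad g_C\coloneqq \frac{1}{\nu(C)}\,(h\circ\psi_C)\,|J\psi_C|\,\mathbf 1_{V^k C},
\]
where $g_C$ is a probability density which, by (h5), (h7), (h8) applied to the chain $\psi_C$ and by $h\in\mathcal K$, lies in $\mathcal K$ with constants independent of $C$ and $k$ whenever $V^k C$ is one of the $U_j$, the residual cylinders being negligible by (h9). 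Therefore
\[
  \left|\frac{\nu\!\left(C\cap V^{-(k+n)}B\right)}{\nu(C)\,\nu(B)}-1\right|\le \frac{1}{\nu(B)}\int_B\left|\mathcal L^n g_C - h\right|\,dm\le \frac{m(B)}{\nu(B)}\,\theta_n\le \frac{\theta_n}{\essinf h},
\]
uniformly in $C$, $k$ and $B$, so $\psi_n\le(\essinf h)^{-1}\theta_n\to0$. I expect the two uniformity claims to be the crux: that every density $g_C$, whatever the level $k$ of the cylinder $C$, sits in one fixed cone $\mathcal K$, and that $\mathcal L^n$ contracts $\mathcal K$ towards $h$ at a rate depending on nothing. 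Orchestrating the distortion bounds (h5), (h7), (h8), the finite-range property (h4), (h6), and the smallness (h9) of the cylinders not subordinate to $\mathcal F$ — in particular splitting each $g_C$ whose support fails to be one of the $U_j$ into a good part plus an error absorbed into $\theta_n$ — is the delicate bookkeeping of the argument.
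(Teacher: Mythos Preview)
The paper does not prove this proposition at all: it is quoted verbatim as Theorem~2 of \cite{nakada} and used as a black box. So there is no ``paper's own proof'' to compare against; the authors' strategy is precisely to outsource this step to Nakada--Natsui and then verify that their induced system $(A,\BB,V,\CC)$ satisfies (h1)--(h9).

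That said, your sketch is a reasonable outline of how the cited result is actually proved in the literature: transfer-operator analysis on a cone of densities with bounded variation or log-Lipschitz constant, using (h5), (h7), (h8) for distortion control, (h4), (h6) for a Doeblin-type lower bound on branch overlaps, and (h9) to handle cylinders not subordinate to the finite partition $\mathcal F$. The two points you flag at the end --- uniformity of the cone containing all the conditional densities $g_C$, and the splitting of $g_C$ when $V^kC$ is not exactly one of the $U_j$ --- are indeed where the real work lies, and your sketch does not carry them out. If you want a self-contained argument you would need to make those steps precise; but for the purposes of this paper, citing \cite{nakada} is the intended and sufficient approach.
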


The strategy to prove the pointwise dual ergodicity of our system
$(\trianglecl,\mu,S)$ is therefore to find a set
$A\subseteq \trianglecl$ of finite positive measure in such a way that
the induced system satisfies (h1)-(h9). We set
\begin{equation} \label{the-set-A}
    A \coloneqq \set{(x,y)\in \Gamma_0\, :\, S(x,y) \in \Gamma_0}.
\end{equation}
By definition of $S$, the set $A$ is the triangle with vertices
$Q_1=\left(\frac 12, \frac 12\right)$,
$Q_2= \left(\frac 23, \frac 13\right)$ and $Q_3=(1,1)$, with the sides
$Q_1Q_2$ and $Q_2Q_3$ not included. Furthermore, notice that every
point in the interior of $A$ has triangle sequence of the form
$(0,\,0,\,\alpha_3,\,\ldots)$. Let $V$ be the induced map of $S$ on
$A$, that is
\[
    V(x,y) \coloneqq S^{\varphi_A(x,y)} (x,y),
\]
defined for $m$-almost $(x,y)\in A$, and where
$\varphi_A(x,y)= \min\{ j\ge 1\, :\, S^j(x,y) \in A\}$ is finite for
$m$-almost $(x,y)\in A$. Let us first introduce the partition of $A$
given by the level sets of the function $\varphi_A$, that is
$\tilde \CC = \{ \tilde C_k\}_{k\in \N}$ with
\[
    \tilde C_k \coloneqq \{(x,y)\in A\, :\, \varphi_A(x,y)=k\}.
\]
Note that $\tilde C_1$ is the open triangle with vertices $Q_1$, $Q_2$
and $\left( \frac 34,\frac 12\right)$, whereas $\tilde
C_2=\emptyset$. For each set $\tilde C_k$ we introduce the
sub-partition $\{C_{k,\sigma} \,:\, \sigma \in \{0,1\}^k\}$, where
$\sigma$ is the symbolic representation of the orbit
$\{(x,y),\,S(x,y),\,\dots,\,S^{k-1}(x,y)\}$ of a point
$(x,y) \in \tilde C_k$, with respect to the partition
$\{\Gamma_0,\Gamma_1\}$. Thus we consider the countable partition
\[
    \CC \coloneqq \{C_{k,\sigma} \,:\, k\geq 1,\ \sigma\in\{0,1\}^k\}.
\]
The partition $\CC$ is measurable and $V$ is clearly injective on each
cylinder, because $S|_{\Gamma_0}$ and $S|_{\Gamma_1}$ are injective
and points in the same cylinder have the same symbolic orbit in
$\trianglecl$ up to their first return to $A$. Thus, assumptions (h1)
and (h2) are satisfied by the system $(A,\BB,V,\CC)$. Moreover, by the
standard results for induced maps recalled above, the transformation
$V$ preserves the measure $\mu|_A$, which can be normalised to be a
probability measure $\nu$. Some of the remaining assumptions (h3)-(h9)
are trivially verified. By definition, $V^n$ maps each cylinder
$C_{i_1,\,\dots,\,i_n}$ from the iterated partition $\CC^n$ onto $A$,
thus we can choose $N=1$ and $U_1=A$ in order to satisfy assumptions
(h4), (h6) and (h9).

It remains to show that the conditions (h3), (h5), (h7) and (h8) also
hold for our choice of $A$, $V$ and $\CC$. To this end, we prove some
properties of the local inverses of $V^n$. Let $i=(k,\sigma)$, with
$k\ge 1$ and $\sigma=(\sigma_1,\,\ldots,\, \sigma_k)\in \{0,1\}^k$,
and let $C_i$ be a cylinder of our partition. A local inverse
$\psi_i:A \to C_i$ is given by
\[
    \psi_i = \phi_{\sigma_1} \circ \phi_{\sigma_2} \circ \cdots \circ
    \phi_{\sigma_k}.
\]
Note that, for $k=1$, the only possible index is given by
$\sigma=(0)$, and the corresponding local inverse is simply
$\phi_0$. Moreover, by the definition of $A$, the indices
$i=(k,\sigma)$ with $k\ge 3$ satisfy $\sigma_1=\sigma_2=0$, so that
$\psi_i = \phi_0\circ \phi_0 \circ \phi_{\sigma_3} \circ \cdots \circ
\phi_{\sigma_k}$. In this way all local inverses $\psi_i$ are of the
form
\begin{equation}
    \label{form-psi}
    \psi_i(x,y) = \left( \frac{r_1 +s_1 x + t_1 y}{r+sx+ty},\ \frac{r_2
        +s_2 x + t_2 y}{r+sx+ty}\right),
\end{equation}
with non-negative integer coefficients $r_1$, $r_2$, $r$, $s_1$,
$s_2$, $s$, $t_1$, $t_2$, $t$ (where the dependence on $i$ has been
dropped to simplify the notation). A straightforward computation shows
that
\begin{equation}
    \label{form-dpsi}
    D\psi_i (x,y) =
      \begin{pmatrix}
          \dfrac{(r s_1 -r_1 s) +(s_1 t -s t_1) y}{(r+sx+ty)^2} &
          \dfrac{(r t_1 -r_1 t) -(s_1 t -s t_1) x}{(r+sx+ty)^2} \\[0.5cm]
          \dfrac{(r s_2 -r_2 s) +(s_2 t -s t_2) y}{(r+sx+ty)^2} &
          \dfrac{(r t_2 -r_2 t) -(s_2 t -s t_2) x}{(r+sx+ty)^2}
      \end{pmatrix}.
\end{equation}

\begin{proposition}
  \label{prop-estimate-dpsi}
  Let $\psi_{i_1,\,\dots,\,i_n} : A\to C_{i_1,\,\dots,\,i_n}$ be a
  local inverse of $V^n$ and let $D\psi_{i_1,\,\dots,\,i_n}$ be its
  Jacobian matrix. Then there exists a sequence $(d(n))_{n\geq 0}$
  such that $\lim_{n\rightarrow \infty}d(n)=0$ and
  \[
      \max \left\{ \sup_A\left( \left|(D\psi_{i_1,\,\dots,\,i_n})_{11}
          \right| + \left|(D\psi_{i_1,\,\dots,\,i_n})_{21}\right|
        \right) ,\, \sup_A \left( \left|
            (D\psi_{i_1,\,\dots,\,i_n})_{12} \right| + \left|
            (D\psi_{i_1,\,\dots,\,i_n})_{22}\right| \right) \right\}
      \le d(n).
  \]
\end{proposition}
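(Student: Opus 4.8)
The plan is to work entirely with the matrix representation \eqref{form-psi}--\eqref{form-dpsi}. Each local inverse $\psi_{i_1,\dots,i_n}=\phi_{w_1}\circ\cdots\circ\phi_{w_m}$ (with $m\ge n$ and $w_1=0$) is a projective map whose matrix is $M=M_{w_1}\cdots M_{w_m}$, a product of the two integer matrices $M_0,M_1$ attached to $\phi_0,\phi_1$; one checks $\det M_0=\det M_1=1$, so $\det M=1$ and $M^{-1}=\mathrm{adj}\,M$. The first step is a purely algebraic reduction. By \eqref{form-dpsi} each entry of $D\psi_{i_1,\dots,i_n}$ has a numerator of the form $(\text{a }2\times2\text{ minor of }M\text{ through the first row})\pm(x\text{ or }y)\cdot(\text{another such minor})$; since $\det M=1$ these minors are, up to sign, entries of $M^{-1}$, and since $|x|,|y|\le 1$ on $A$ every entry of $D\psi_{i_1,\dots,i_n}$ is at most $2\norm{M^{-1}}_\infty/(r+sx+ty)^2$. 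On $A$ one has $x\ge\frac12$ and $y\ge\frac13$, hence $r+sx+ty\ge\frac13(r+s+t)=\frac13\rho_1(M)$, where $\rho_j(M)$ denotes the $j$-th row sum; and because every return word begins with $\phi_0\phi_0$, a one-line induction gives $\rho_1(M)=\max_j\rho_j(M)=\norm{M}_\infty$ (also $\rho_1(M)\ge n+1$, since appending $\phi_0$ or $\phi_1$ raises the first row sum by $(M)_{11}\ge1$). Using equivalence of matrix norms in dimension $3$ together with $s_1s_2s_3=|\det M|=1$ for the singular values $s_1\ge s_2\ge s_3$ of $M$, one gets $\norm{M}_\infty\asymp s_1$ and $\norm{M^{-1}}_\infty\asymp 1/s_3=s_1s_2$, so
\[
\norm{D\psi_{i_1,\dots,i_n}}\ \le\ C\,\frac{s_1 s_2}{s_1^{2}}\ =\ C\,\frac{s_2(M_{i_1,\dots,i_n})}{s_1(M_{i_1,\dots,i_n})}.
\]
Thus it suffices to prove $\displaystyle\sup_{(i_1,\dots,i_n)}s_2(M_{i_1,\dots,i_n})/s_1(M_{i_1,\dots,i_n})\to0$ as $n\to\infty$.

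The second step proves this uniform domination by a positive-cone / Birkhoff-contraction argument. All $M_0,M_1$, hence all $M_{i_1,\dots,i_n}$, map the positive octant $\mathcal Q=\R^3_{\ge0}$ into itself, and Birkhoff's projective contraction theorem says that $\diam_{\mathrm{Hilb}}(M_{i_1,\dots,i_n}\mathcal Q)$ is nonincreasing in $n$ and is multiplied by $\tau_c(N)=\tanh(\Delta(N)/4)<1$ each time a strictly positive sub-block $N$ of Hilbert diameter $\Delta(N)=\diam_{\mathrm{Hilb}}(N\mathcal Q)$ occurs; moreover a cone of small Hilbert diameter forces $s_2/s_1$ small. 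So I need a family of \emph{good sub-words} $g$ with $M_g>0$ and $\Delta(M_g)\le\Delta^*<\infty$ uniformly (hence $\tau_c(M_g)\le\tau^*<1$), recurring at least $\sim n$ times disjointly inside every admissible word of generation $n$. The candidate is $g=\phi_0\phi_0\,v\,\phi_0$ with $v$ an arbitrary finite word: a direct computation on $M_0,M_1$ shows $M_g$ is strictly positive, and --- although $M_g$ acquires entries of size $\sim a$ whenever $v$ carries a block $\phi_1^a$ --- these cancel in Birkhoff's cross-ratio formula $\Delta(M_g)=\log\max_{i,j,k,\ell}\frac{(M_g)_{ik}(M_g)_{j\ell}}{(M_g)_{jk}(M_g)_{i\ell}}$, so $\Delta(M_g)$ is bounded independently of $v$. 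Since every return word $\psi_{i_j}$ begins with $\phi_0$, and with $\phi_0\phi_0$ when its length $k_j\ge3$, a sub-word of this shape sits inside every one or two consecutive return words (the trailing $\phi_0$ being furnished by the start of the next block), so one obtains $\ge cn$ pairwise disjoint good sub-blocks. Birkhoff's theorem then gives $\diam_{\mathrm{Hilb}}(M_{i_1,\dots,i_n}\mathcal Q)\le\Delta^*(\tau^*)^{cn-1}$ uniformly, hence $s_2/s_1\to0$ uniformly, and the proposition follows with $d(n)$ of this order.

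The main obstacle is the verification inside the second step: strict positivity of the sub-blocks $M_{\phi_0\phi_0 v\phi_0}$ and, above all, the bound on their Hilbert diameter \emph{uniformly in $v$}. One cannot avoid variable-length good sub-words --- no finite list of bounded-length ``magic factors'' occurs in all admissible words, since for a word made of repeated $\phi_0\phi_0\phi_1^k$ one is forced to use the factor $\phi_0\phi_0\phi_1^k\phi_0$ --- and this is exactly the dynamical fingerprint of the segment $\Lambda$ of indifferent fixed points: orbits may linger near $\Lambda$ for arbitrarily long stretches (long runs $\phi_1^a$), so one must check that the correspondingly large matrix entries do not destroy the projective contraction. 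Everything else is routine: the algebra of the first step, the combinatorial recurrence of the good sub-blocks, and the standard implication ``small Hilbert diameter $\Rightarrow$ $s_2/s_1$ small''. Once $D\psi_{i_1,\dots,i_n}$ is controlled this way, conditions (h3), (h5), (h7) and (h8) follow from it by mean-value and bounded-distortion estimates in the usual manner.
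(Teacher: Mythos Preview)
Your first step --- the reduction of the Jacobian bound to the singular-value ratio $s_2(M)/s_1(M)$ via $\det M=1$ and the row-sum domination $\rho_1(M)=\|M\|_\infty$ --- is correct and clean. The gap lies in the second step, precisely at the point you single out as the main obstacle. Two concrete issues. First, your good block $g=\phi_0\phi_0\,v\,\phi_0$ is \emph{not} strictly positive for $v=\emptyset$, since
\[
M_0^3=\begin{pmatrix}2&1&1\\1&1&1\\1&0&1\end{pmatrix}
\]
has a zero entry and hence infinite Hilbert diameter; and this case genuinely occurs, whenever several consecutive return words are the length-$1$ word $(0)$, so the combinatorics must be reworked (to $\phi_0\phi_0\,v\,\phi_0\phi_0$, or to compositions of four consecutive return words). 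Second, even after fixing the block shape, the uniform bound $\Delta(M_g)\le\Delta^*$ over \emph{all} admissible middles $v$ is asserted but not verified: your remark that large entries ``cancel in the cross-ratio'' is only implicitly checked for $v=\phi_1^a$, not for a general $00$-free word, and without uniformity the contraction factors $\tanh(\Delta_k/4)$ can accumulate to $1$.

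The paper (Appendix~\ref{sec:matrices}) takes a different and more elementary route that sidesteps Birkhoff entirely. It bounds the column sums of $D\Phi$ by the ``projective perimeter'' $\|P_{v(\Phi)}-P_{v_1(\Phi)}\|+\|P_{v(\Phi)}-P_{v_2(\Phi)}\|$ of the $\ell^1$-normalised rows of $M_\Phi$ (Lemma~\ref{lem-matrici}(i)), and then shows by a one-line triangle inequality (Lemma~\ref{lem-matrici}(iii)) that this perimeter is \emph{non-increasing} under left composition with either $\phi_0$ or $\phi_1$, with $\phi_0$ always realising the worst case (Lemma~\ref{lem-matrici}(ii)). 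Since every local inverse of $V^n$ is a composition of at least $n$ of the maps $\phi_j$, the supremum over all such inverses is attained at the shortest word $\phi_0^n$; its matrix is explicit (Narayana's cows sequence) and its perimeter visibly tends to $0$ (Lemma~\ref{lem-dn}). This monotonicity trick is what replaces your uniform-contraction claim: rather than proving every long word contracts uniformly, it proves that no word can be worse than $\phi_0^n$.
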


\begin{proposition}\label{prop-det}
    Let $\psi_i:A \to \R^2$ be a local inverse of $V$ given by
    $\psi = \phi_{\sigma_1} \circ \phi_{\sigma_2} \circ \cdots \circ
    \phi_{\sigma_k}$. Then $r+s+t>0$ and
    \[
        J\psi_i(x,y) = \frac{1}{(r+sx+ty)^3}.
    \]
\end{proposition}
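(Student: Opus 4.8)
The plan is to proceed by induction on $k$, the length of the composition $\psi_i = \phi_{\sigma_1}\circ\cdots\circ\phi_{\sigma_k}$, keeping track of the linear-fractional form \eqref{form-psi} and in particular the denominator $r+sx+ty$. For the base case $k=1$, the only allowed index is $\sigma=(0)$, so $\psi_i=\phi_0$, whose explicit formula $\phi_0(x,y)=\bigl(\tfrac{1}{1+y},\tfrac{x}{1+y}\bigr)$ gives denominator $1+y$, i.e. $(r,s,t)=(1,0,1)$, hence $r+s+t=2>0$, and a direct computation of $|J\phi_0|$ (or reading off $JS(x,y)=1/x^3$ on $\Gamma_0$ and composing with the definition of $\phi_0$) yields $J\phi_0(x,y)=\tfrac{1}{(1+y)^3}=\tfrac{1}{(r+sx+ty)^3}$, as claimed. (If one prefers, one can also handle $k=2$ explicitly, since by the definition of $A$ every index with $k\ge 2$ begins with $\sigma_1=0$, $\sigma_2=0$; but the inductive step will cover it uniformly.)

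For the inductive step, suppose $\psi' := \phi_{\sigma_2}\circ\cdots\circ\phi_{\sigma_k}$ has the form \eqref{form-psi} with coefficients $(r',s',t',\dots)$ satisfying $r'+s'+t'>0$ and $J\psi'(x,y)=\tfrac{1}{(r'+s'x+t'y)^3}$, and set $\psi_i=\phi_{\sigma_1}\circ\psi'$. There are two cases according to $\sigma_1\in\{0,1\}$. In either case I would substitute the explicit formula for $\phi_{\sigma_1}$ — namely $\phi_0(x,y)=\bigl(\tfrac{1}{1+y},\tfrac{x}{1+y}\bigr)$ or $\phi_1(x,y)=\bigl(\tfrac{x}{1+y},\tfrac{y}{1+y}\bigr)$ — into \eqref{form-psi} for $\psi'$, clear the resulting nested fractions, and read off the new coefficient vector $(r,s,t)$ of the composite. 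The key computation is that the new denominator is
\[
    r+sx+ty = (r'+s'+t')\,\bigl(\text{numerator-or-denominator factor from }\phi_{\sigma_1}\bigr)
\]
up to the denominator $1+y$ coming from $\phi_{\sigma_1}$ itself; more precisely one checks that composing with $\phi_0$ sends the bottom row $(r',s',t')$ to $(r'+s'+t',\,0,\,s')$-type data while composing with $\phi_1$ sends it to $(r'+s'+t',\,s',\,t')$-type data — the arithmetic here is exactly the action on row vectors of the integer matrices representing $\phi_0,\phi_1$ as projective maps. From this the inequality $r+s+t>0$ is immediate (it equals a positive integer combination of $r',s',t'$, never all zero), and the Jacobian identity follows from the chain rule: $J\psi_i = (J\phi_{\sigma_1}\circ\psi')\cdot J\psi'$, where $J\phi_{\sigma_1}(x,y)=\tfrac{1}{(1+y)^3}$, and one verifies algebraically that $(1+\pi_2(\psi'(x,y)))^3\cdot(r'+s'x+t'y)^3=(r+sx+ty)^3$, with $\pi_2$ the second coordinate; this last identity is precisely the statement that the denominators multiply correctly under composition of linear-fractional maps.

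The main obstacle — really the only one — is bookkeeping: making the substitution-and-clearing step clean enough that the emergent coefficient vector $(r,s,t)$ is visibly a nonnegative integer combination of the previous data and that the denominator identity drops out without sign errors. I would organize this by representing each $\phi_{\sigma_j}$ by its $3\times3$ integer matrix acting on homogeneous coordinates $(1,x,y)^{T}$ (with $\phi_0$ having matrix $\left(\begin{smallmatrix}1&0&1\\1&0&0\\0&1&0\end{smallmatrix}\right)$ and $\phi_1$ the matrix $\left(\begin{smallmatrix}1&0&1\\0&1&0\\0&0&1\end{smallmatrix}\right)$, up to the obvious transpose/ordering convention matching \eqref{form-psi}), so that $\psi_{i_1,\dots,i_k}$ corresponds to a product of such matrices; then $(r,s,t)$ is literally the first row of that product, manifestly a vector of nonnegative integers, and $r+s+t>0$ because the product of these matrices has no zero row (each factor has first row $(1,0,1)$ and is invertible over $\Q$). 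Since all entries of all these matrices are nonnegative and each has a strictly positive first row, $r+s+t\ge 1$. The Jacobian claim then reduces to the multiplicativity of the "bottom-coordinate" factor under matrix multiplication, which is the cocycle identity for the denominator of a projective action and can be checked in one line once the matrix formalism is set up.
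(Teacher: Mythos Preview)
Your inductive approach is correct and complete in spirit. The paper itself does not give a self-contained argument: it simply remarks that $r+s+t>0$ is immediate from the construction and refers to a result of Veech (and Proposition~2 in Schweiger's book) for the Jacobian formula. Your proof is therefore more elementary and more explicit than what the paper provides; the matrix formalism you propose is exactly the one set up in Appendix~\ref{sec:matrices}, so your argument dovetails with the paper's later machinery.

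One point of bookkeeping deserves correction. Your intermediate claim that composing with $\phi_0$ sends the denominator row $(r',s',t')$ to data of the type $(r'+s'+t',\,0,\,s')$, and with $\phi_1$ to $(r'+s'+t',\,s',\,t')$, is not right. In the paper's convention the denominator row is the \emph{first} row of $M_{\psi'}$, and since both $M_0$ and $M_1$ have first row $(1,0,1)$, left-multiplying by either sends the first row of $M_{\psi'}$ to the sum of its first and third rows, i.e.\ to $(r'+r'_2,\,s'+s'_2,\,t'+t'_2)$, independently of $\sigma_1$. This is in fact what makes your chain-rule computation work: since $J\phi_{\sigma_1}(u,v)=\tfrac{1}{(1+v)^3}$ for both $\sigma_1=0,1$, one gets
\[
J\psi_i(x,y)=\frac{1}{\bigl(1+\psi'_2(x,y)\bigr)^3}\cdot\frac{1}{(r'+s'x+t'y)^3}
=\frac{1}{\bigl((r'+r'_2)+(s'+s'_2)x+(t'+t'_2)y\bigr)^3},
\]
which is exactly $\tfrac{1}{(r+sx+ty)^3}$ for the new first row. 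The positivity $r+s+t>0$ then follows by induction since the new row sum equals $(r'+s'+t')+(r'_2+s'_2+t'_2)\ge r'+s'+t'>0$. With this correction your argument goes through cleanly.
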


\noindent Proposition \ref{prop-estimate-dpsi} follows from results used to prove the main result in \cite{nog}. We give a proof of the proposition in Appendix~\ref{sec:matrices} for completeness and also because we obtain an explicit estimate for the sequence $d(n)$. Concerning Proposition \ref{prop-det}, it is immediate from the construction of the local inverses of $V$ that $r+s+t>0$. The formula for the Jacobian determinant is a particular case of a result in \cite{veech} (see also Proposition 2 in \cite{schw-book}). We are now in a position to prove that
conditions (h3), (h5), (h7) and (h8) hold for our system.

\vskip 0.2cm
\begin{proof}[Proof of (h3)]
    Using Proposition~\ref{prop-estimate-dpsi} we have
    \begin{align*}
        \left\|\psi_{i_1,\,\ldots,\,i_n}(x_1,y_1) -
          \psi_{i_1,\,\ldots,\,i_n}
          (x_2,y_2)\right\| &\le\\
        &\hspace{-3cm}\le\sup_A\left( \left|
            (D\psi_{i_1,\,\ldots,\,i_n})_{11} \right| + \left|
            (D\psi_{i_1,\,\ldots,\,i_n})_{21}\right| \right)|x_1-x_2|+\\
        &\hspace{0cm}+ \sup_A\left(
          \left|(D\psi_{i_1,\,\ldots,\,i_n})_{12}\right| +
          \left|(D\psi_{i_1,\,\ldots,\,i_n})_{22}\right|\right)|y_1-y_2| \le\\
        &\hspace{-3cm}\leq d(n)\left(|x_1-x_2|+|y_1-y_2|\right) \le
        \sqrt{2}\,d(n)\|(x_1,y_1) - (x_2,y_2)\|.
    \end{align*}
    Since $C_{i_1,\,\ldots,\,i_n} = \psi_{i_1,\,\ldots,\,i_n}(A)$, we
    have
    \[
        \diam C_{i_1,\,\ldots,\,i_n}\le \sqrt{2}d(n)\cdot \diam A =
        \frac{\sqrt{10}}{3}d(n),
    \]
    so that (h3) is satisfied with
    $\sigma(n) = \frac{\sqrt{10}}{3}d(n)$.
\end{proof}

\begin{proof}[Proof of (h8)]
    The above proof of (h3) also shows that (h8) is satisfied with
    $r_2= \frac{3}{\sqrt{5}}$.
\end{proof}

\begin{proof}[Proof of (h5)]
    We have $V^n(C_{i_1,\,\ldots,\,i_n})=A$ for all $n$ and all
    cylinders $C_{i_1,\,\ldots,\,i_n}$. For all $(x,y)\in A$ holds
    \[
        \frac{1}{27}(r+s+t)^3 \le (r+sx+ty)^3 \le (r+s+t)^3
    \]
    if the coefficients $r$, $s$, and $t$ are non-negative. Then from
    Proposition~\ref{prop-det} it follows that condition (h5) holds
    with $\lambda=27$.
\end{proof}

\begin{proof}[Proof of (h7)]
    Using Proposition~\ref{prop-det} we have
    \[
        \left|J\psi_{i_1,\,\ldots,\,i_n} (x_1,y_1) -
          J\psi_{i_1,\,\ldots,\,i_n} (x_2,y_2) \right| \le 3\sqrt{2}\,
        \left( \max_{(x,y)\in A} \frac{s+t}{(r+sx+ty)^4}\right)
        \|(x_1,y_1)-(x_2,y_2)\|\, .
    \]
    Arguing as above
    \[
        \max_{(x,y)\in A} \frac{s+t}{(r+sx+ty)^4} \le 3\,
        \frac{s+t}{r+s+t}\max_{(x,y)\in A}
        J\psi_{i_1,\,\ldots,\,i_n}(x,y) \le 81 \inf_{(x,y)\in A}
        J\psi_{i_1,\,\ldots,\,i_n}(x,y)\le 81
        m(C_{i_1,\,\ldots,\,i_n})
    \]
    where $m$ denotes the normalised Lebesgue measure on $A$. It
    follows that (h7) holds with $r_1 = 243\sqrt{2}$.
\end{proof}

\noindent We have thus proved that the induced map $V$ of $S$ on the
triangle $A$ satisfies the assumptions of
Proposition~\ref{prop:fs-cfm}, hence the induced map $V$ is
$\psi$-mixing. As a consequence, our system $(\trianglecl,\mu,S)$
satisfies the assumptions of Proposition~\ref{prop:cfm-pde}, hence it
is pointwise dual ergodic.

The second part of Theorem~\ref{thm-pde} concerns the return sequence
$a_n(S)$. To achieve the conclusion, we use \cite[Lemma
3.7.4]{aaronson:iet} and \cite[Proposition 7]{zwei}: these results
imply that, given the \emph{wandering rate} $w_n(A)$ of the set $A$, it holds
\[
a_n(S) \asymp \frac{n}{w_n(A)}
\]
 In
Appendix~\ref{sec:varying-seq} we recall the definition of the
wandering rate $w_n(A)$ of the set $A$ and show that
$(w_n(A))_{n\geq 1}$ satisfies $w_n(A)\asymp \log^2 n$ (see Propositions  \ref{prop:w-above} and \ref{prop:w-below}).
This completes the proof of Theorem~\ref{thm-pde}.

\begin{remark} \label{rem:slowly}
It is known (see e.g. \cite[Lemma
3.7.4]{aaronson:iet} and \cite[Proposition 7]{zwei}) that if $w_n(A)$ is regularly varying of index $1-\alpha$, then
\[
    a_n(S) \sim \frac{1}{\Gamma(2-\alpha)\Gamma(1+\alpha)}
    \frac{n}{w_n(A)},
\]
and $a_n(S)$ is a regularly varying sequence of index $\alpha$. Hence to obtain that $a_n(S)$ is regularly varying of index $\alpha=1$ it is enough to show that $w_n(A)$ is slowly varying. The last is the additional assumption we need in Theorem \ref{thm-wlln} and Corollary \ref{cor-kuzdig}.Unfortunately we don't have a proof that $w_n(A)$ is slowly varying. At the end of Appendix~\ref{sec:varying-seq} we discuss this property.
\end{remark}

\section{Pointwise convergence of Birkhoff averages for a class of
  non-summable observables}
\label{sec:ba}

As already mentioned at the beginning of Section~\ref{sec:wlln}, for
infinite-measure-preserving systems (like our map $S$, or equivalently
the map $F$ on the strip as described in Section~\ref{sec:ess}), the
strict analogue of Birkhoff's Ergodic Theorem is trivial, in the sense
that it tells us only that for every observable $f\in L^1(\mu)$, the
Birkhoff averages of $f$ for a system $(X,\mu,R)$
\[
  \frac 1n \sum_{k=0}^{n-1}f\circ R^k(x)
\]
converge $\mu$-almost everywhere to zero. In a recent paper \cite{LM},
the question of convergence of Birkhoff sums for ``global
observables'', which were first introduced by Lenci \cite{limix, lpmu}
in the context of infinite mixing, is considered. In \cite{LM}, a
global observable is rather vaguely defined to be any $L^\infty$
function for which a Birkhoff-like theorem could in principle be shown
to hold. We would like to apply one of the results of this paper to
give certain examples of $L^\infty$ observables for our map $F$ for
which the Birkhoff average can shown to be almost everywhere
constant. In order to state this result, first we need to recall a
certain dynamically-defined partition.

Assume that $(X, \BB, \mu, R)$ is a conservative and ergodic
system. Given a set $L_0$ with $0 < \mu(L_0) < +\infty$, we have that
\begin{equation*}
    \bigcup_{k\ge 0}R^{-k} L_0 = X \pmod{\mu},
\end{equation*}
that is, $L_0$ is a sweep-out set. Now recursively define, for each
$k \ge 1$,
\[
  \label{lk}
  L_k \coloneqq \left( R^{-1} L_{k-1}\right) \setminus L_0.
\]
Then the collection $\{ L_k \}_{k\geq 0}$ forms a partition of $X$.

\begin{theorem}[\cite{LM}]
  \label{thmLM}
  Let $(X, \BB, \mu, R)$ be an infinite-measure-preserving,
  conservative, ergodic dynamical system, endowed with the partition
  $\{L_k\}_{k\geq 0}$, as described above. Let $f\in L^\infty(X, \mu)$
  admit $f^*\in \C$ with the following property:
  $\forall \varepsilon>0$, $\exists N, K\in \N$ such that
  $\forall x \in \bigcup_{k \ge K} L_k$,
  \[
      \left| \frac1N \sum_{k=0}^{N-1} f\circ R^k(x)-f^* \right| \le
      \varepsilon.
  \]
  Then for $\mu$-almost every $x \in X$,
  \[
      \lim_{n\rightarrow+\infty}\frac 1n \sum_{k=0}^{n-1}f\circ R^k(x)
      = f^*.
  \]
\end{theorem}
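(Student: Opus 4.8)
The plan is to write $g\coloneqq f-f^*$ and to prove that $\frac1n\sum_{k=0}^{n-1}g\circ R^k(x)\to 0$ for $\mu$-a.e.\ $x$. The first step is to read off the geometry of the partition $\{L_k\}$: an immediate induction on the recursion $L_k=(R^{-1}L_{k-1})\setminus L_0$ shows that $L_k$ is exactly the set of points whose first entrance time into $L_0$ equals $k$, so that $\bigcup_{k\ge K}L_k=\{x:\,x,Rx,\dots,R^{K-1}x\notin L_0\}$, and that if $y\in L_0$ has first return time $m$ then $R^b y\in L_{m-b}$ for $1\le b\le m-1$. With this, the hypothesis says: for every $\eps>0$ there exist $N,K$ such that $\bigl|\sum_{j=0}^{N-1}g\circ R^j(z)\bigr|\le N\eps$ for every $z$ that stays out of $L_0$ during its first $K$ iterates.

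The technical heart of the argument --- and the step I expect to be the main obstacle --- is a bound on the Birkhoff sums of $g$ along a single excursion over $L_0$ that is \emph{uniform in the excursion length}. Fix $\eps>0$ and the associated $N,K$; I claim there is a constant $C=C(\eps)$, depending only on $N$, $K$ and $\|f\|_\infty$, such that for every $y\in L_0$ with return time $m$ and every $0\le\ell\le m$,
\[
  \Bigl|\sum_{i=0}^{\ell-1}g\circ R^i(y)\Bigr|\le \ell\,\eps+C .
\]
To see this I would use the Ces\`aro-averaging identity
\[
  N\sum_{i=0}^{\ell-1}g(R^iy)=\sum_{a=0}^{\ell-1}\Bigl(\sum_{j=0}^{N-1}g(R^{a+j}y)\Bigr)+E ,
\]
in which the error $E$ gathers only the $O(N)$ ``edge'' terms and hence obeys $|E|=O(N^2\|f\|_\infty)$. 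When $1\le a\le m-K$ the point $R^ay$ lies in $L_{m-a}\subseteq\bigcup_{k\ge K}L_k$, so the inner sum is $\le N\eps$ in modulus; the remaining at most $K+1$ values of $a$ contribute at most $2N\|f\|_\infty$ apiece. Dividing by $N$ gives the claim, the trivial bound $2\ell\|f\|_\infty$ taking care of the degenerate cases $m\le K$ or $\ell<N$. The point is precisely that the averaging, rather than a crude triangle inequality, keeps $C$ from growing with $m$; this is what will let the estimate be summed over arbitrarily many excursions below.

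It then remains to assemble the full Birkhoff sum. Since $L_0$ is a sweep-out set, $\mu$-a.e.\ $x$ eventually enters $L_0$, and the bounded contribution of the initial segment before that first visit is $O(1)$, hence negligible after division by $n$; so I may assume $x\in L_0$. Let $0=\varphi_0<\varphi_1<\cdots$ be the successive return times of the orbit of $x$ to $L_0$ and put $\beta_n(x)\coloneqq\#\{0\le k<n:\,R^kx\in L_0\}$, so that $\varphi_{\beta_n(x)-1}<n\le\varphi_{\beta_n(x)}$. Decomposing $\sum_{i=0}^{n-1}g(R^ix)$ into the $\beta_n(x)-1$ completed excursions over $L_0$ together with the final partial excursion, and applying the estimate above to each of these $\beta_n(x)$ pieces (whose lengths sum to exactly $n$), gives
\[
  \Bigl|\sum_{i=0}^{n-1}g\circ R^i(x)\Bigr|\le \eps\,n+C\,\beta_n(x).
\]
Finally, since $\mathbf 1_{L_0}\in L^1(\mu)$ and $\mu(X)=+\infty$, the elementary form of Birkhoff's theorem in infinite measure recalled at the start of Section~\ref{sec:wlln} yields $\beta_n(x)/n\to 0$ for $\mu$-a.e.\ $x$. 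Hence $\limsup_n\frac1n\bigl|\sum_{i=0}^{n-1}g\circ R^i(x)\bigr|\le\eps$ almost everywhere; intersecting over $\eps=1/q$, $q\in\N$, we obtain $\frac1n\sum_{i=0}^{n-1}g\circ R^i\to 0$, that is $\frac1n\sum_{i=0}^{n-1}f\circ R^i\to f^*$, $\mu$-a.e., as claimed.
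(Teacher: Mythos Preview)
The paper does not prove this theorem: it is quoted verbatim from \cite{LM} and used as a black box in the proof of Theorem~\ref{thm:BT}. So there is no ``paper's own proof'' to compare against.

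That said, your argument is correct and is essentially the proof given in \cite{LM}. The identification of $L_k$ with the level sets of the first-entrance time to $L_0$ is right; the Ces\`aro-averaging trick is exactly the device that converts the hypothesis (a bound on a single $N$-block far out in the tower) into the uniform excursion estimate $\bigl|\sum_{i=0}^{\ell-1}g\circ R^i(y)\bigr|\le \ell\eps+C$ with $C$ independent of the excursion length; and the final summation over excursions followed by the appeal to $\beta_n/n\to 0$ (the trivial Birkhoff theorem for $\mathbf 1_{L_0}\in L^1$ in infinite measure) is the natural conclusion. The edge cases you flag ($m\le K$, $\ell<N$, the initial segment before the first visit to $L_0$) are indeed all absorbed into the constant $C$ or are $O(1)$. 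There are no gaps.
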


Let us recall the system $(\Sigma, \rho, F)$ defined in Section
\ref{sec:ess}, which is isomorphic to $(\trianglecl,\mu,S)$, and let
us describe a suitable partition of the strip $\Sigma$ for the
application of Theorem \ref{thmLM}. We let $L_0\coloneqq \Pi_0$, and
then, as above, define recursively the sets
\[
    L_k\coloneqq \left(F^{-k}L_{k-1}\right)\setminus L_0 = \{(u, v)\in
    \Sigma \,:\, k\le v< k+1\} = \Sigma_k.
\]
As a class of observables we consider the set $\mathcal G$ of
functions $f(u,v) \coloneqq g(u)\cdot h(v)$, where
$g:(0,1)\rightarrow \R$ is a bounded function and $h:\R\rightarrow \R$
is a continuous $\alpha$-periodic function, with
$\alpha\in \R\setminus \Q$.

\begin{theorem}\label{thm:BT}
    Let $(\Sigma, \rho, F)$ be the system defined in Section
    \ref{sec:ess}, and $f:\Sigma \rightarrow \C$ a function
    $f(u,v)=g(u)h(v)$ in the space $\mathcal G$ defined above, with
    $g$ constant if $\int_0^\alpha h dv\not= 0$. Then there exists a
    constant $f^*\in \C$ such that for $\rho$-almost every
    $(u,v)\in \Sigma$
    \[
        \lim_{n\rightarrow+\infty}\frac1n\sum_{k=0}^{n-1}f\circ F^k(u,
        v)=f^*.
    \]
\end{theorem}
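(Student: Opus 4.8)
The plan is to deduce Theorem~\ref{thm:BT} directly from Theorem~\ref{thmLM}, applied to the infinite-measure-preserving system $(\Sigma,\rho,F)$ equipped with the partition $\{L_k\}_{k\ge 0}$, $L_k=\Sigma_k$, described just above. Conservativity and ergodicity of $(\Sigma,\rho,F)$, together with $\rho(\Sigma)=+\infty$, follow from Proposition~\ref{prop-mis-inv} via the mod-$\mu$ isomorphism of Section~\ref{sec:ess}, and $f=g\cdot h\in L^\infty(\Sigma,\rho)$ because $g$ is bounded and $h$ is continuous and periodic. Hence it suffices to produce, for every $\eps>0$, integers $N,K$ and a constant $f^*$ such that $\bigl|\frac1N\sum_{j=0}^{N-1}f\circ F^j(u,v)-f^*\bigr|\le\eps$ for all $(u,v)\in\bigcup_{k\ge K}\Sigma_k=\{v\ge K\}$.

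The key simplification is that high up in the strip the map $F$ acts very simply: on $\Pi_1=\{v\ge1\}$ one has $F(u,v)=(u,v-1)$, so if $(u,v)\in\Sigma_k$ with $k\ge N$ then $F^j(u,v)=(u,v-j)$ for every $j=0,1,\dots,N-1$, since the intermediate iterates all lie in $\Pi_1$ (indeed $v-i\ge k-(N-1)\ge 1$). Consequently, for all such $(u,v)$,
\[
\frac1N\sum_{j=0}^{N-1}f\circ F^j(u,v)=g(u)\cdot\frac1N\sum_{j=0}^{N-1}h(v-j).
\]
The remaining — and only substantial — point is to control $\frac1N\sum_{j=0}^{N-1}h(v-j)$ uniformly in $v$. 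I would show that, as $N\to\infty$,
\[
\sup_{v\in\R}\Bigl|\frac1N\sum_{j=0}^{N-1}h(v-j)-\frac1\alpha\int_0^\alpha h(w)\,dw\Bigr|\longrightarrow 0 .
\]
Since $h$ is $\alpha$-periodic it descends to the circle $\R/\alpha\Z$, on which $w\mapsto w-1$ is a rotation that is uniquely ergodic precisely because $\alpha$ (equivalently $1/\alpha$) is irrational; the displayed uniform convergence is then the standard fact that for a uniquely ergodic transformation the Birkhoff averages of a continuous function converge uniformly to its space average. Alternatively one checks it first on the characters $h(v)=e^{2\pi i nv/\alpha}$, where the geometric sum contributes $O(1/N)$ uniformly in $v$ for $n\neq0$ (because $e^{-2\pi i n/\alpha}\neq1$), and then approximates a general continuous $h$ uniformly by trigonometric polynomials. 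This is the step where the hypothesis $\alpha\notin\Q$ is essential, and I expect it to be the main (indeed essentially the only) obstacle.

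Granting this, set $\bar h:=\frac1\alpha\int_0^\alpha h(w)\,dw$. If $\int_0^\alpha h\,dv=0$, then $\bar h=0$: given $\eps>0$ choose $N$ so that $\sup_v\bigl|\frac1N\sum_{j=0}^{N-1}h(v-j)\bigr|\le\eps/(1+\|g\|_\infty)$, put $K:=N$ and $f^*:=0$; then $\bigl|\frac1N\sum_{j=0}^{N-1}f\circ F^j(u,v)\bigr|\le\|g\|_\infty\,\eps/(1+\|g\|_\infty)\le\eps$ on $\{v\ge K\}$. If instead $\int_0^\alpha h\,dv\neq0$, then by hypothesis $g\equiv c$ is constant, so $\frac1N\sum_{j=0}^{N-1}f\circ F^j(u,v)=c\cdot\frac1N\sum_{j=0}^{N-1}h(v-j)\to c\bar h$ uniformly in $v$; taking $N$ large, $K:=N$ and $f^*:=c\bar h$ verifies the required inequality. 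In both cases the hypothesis of Theorem~\ref{thmLM} holds, so $\lim_{n\to\infty}\frac1n\sum_{k=0}^{n-1}f\circ F^k(u,v)=f^*$ for $\rho$-almost every $(u,v)\in\Sigma$, which is the assertion.
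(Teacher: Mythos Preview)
Your proof is correct and follows essentially the same route as the paper: reduce to Theorem~\ref{thmLM} using that $F$ acts as the translation $(u,v)\mapsto(u,v-1)$ on $\Pi_1$, then invoke unique ergodicity of the irrational rotation on $\R/\alpha\Z$ to get uniform convergence of $\frac1N\sum_{j=0}^{N-1}h(v-j)$ to $\frac1\alpha\int_0^\alpha h$, and split into the cases $\int_0^\alpha h=0$ and $g$ constant. Your use of $\eps/(1+\|g\|_\infty)$ is in fact slightly more careful than the paper's $\eps/\|g\|_\infty$.
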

\begin{proof}
    For all $N\in \N$ and for all $(u,v)\in \Sigma$ with $v > N$ we
    have
    \begin{align*}
        \frac1N\sum_{k=0}^{N-1}f\circ F^k(u, v) &=
        \frac1N\sum_{k=0}^{N-1}f(u, v-k) =\\
        &=\frac1N g(u)\sum_{k=0}^{N-1} h(v-k) =
        g(u) \cdot \frac1N \sum_{k=0}^{N-1} h\circ \tau^k(v),
    \end{align*}
    where $\tau:\R/\alpha\Z \rightarrow \R/\alpha\Z$ is defined by
    $\tau(x) \coloneqq x-1\pmod{\alpha}$. Note that $\tau$ preserves
    the Lebesgue measure and is topologically conjugate to the
    rotation $R_{\frac 1\alpha}:\R/\Z\rightarrow \R/\Z$,
    $R_{\frac 1\alpha}(x) \coloneqq x-\frac 1\alpha \pmod{1}$. The map
    $R_{\frac 1\alpha}$ is uniquely ergodic with respect to the
    Lebesgue measure since $\alpha\in \R\setminus\Z$, and thus so is
    $\tau$. It then follows, since $h$ is continuous and $\R/\alpha\Z$
    is compact, that
    \[
    \lim_{n\rightarrow+\infty} \frac1n \sum_{k=0}^{n-1} h\circ
    \tau^k(v) = \frac{1}{\alpha} \int_0^\alpha h(v)\,dv \eqqcolon h^*
\]
uniformly on $\R$. If $h^*=0$, there exists $N_\star\in \N$ such that
for all $n\geq N_\star$
\[
    \left| \frac1n \sum_{k=0}^{n-1} h\circ \tau^k(v) \right| <
    \frac{\varepsilon}{\norm{g}_\infty}.
\]
If we choose $N=N_\star$, $K>N$, and $f^*=0$, then for all $(u,v)\in
\Sigma$ with $v\geq K$
\[
    \left| \frac1N\sum_{k=0}^{N-1}f\circ F^k(u, v) - f^* \right| =
    \left| \frac1N\sum_{k=0}^{N-1} g(u) h(v-k) \right| \leq
    \norm{g}_{\infty} \left| \frac1N\sum_{k=0}^{N-1} h\circ
    \tau^k(v)\right| < \varepsilon,
\]
and we can apply Theorem \ref{thmLM} with $f^*=0$. In the case
$h^*\not= 0$, we can repeat the argument when $g(u)\equiv \bar g$ is a
constant function. In that case there exists $N_\star\in \N$ such that
for all $n\geq N_\star$
\[
    \left| \frac1n \sum_{k=0}^{n-1} h\circ \tau^k(v) - h^*\right| <
    \frac{\varepsilon}{\bar g}\, ,
\]
and we can apply Theorem \ref{thmLM} as above with $f^*=\bar g\, h^*$.
\end{proof}

\section{A complete triangular tree of rational pairs}
\label{sec:tree}


Our aim in this section is to construct a tree that contains every
pair of rational numbers in $\Q^2\cap \trianglecl$, first by using a
modified version of the map $S$ and then, equivalently, by giving a
geometric contruction by way of a mediant operation defined on pairs
of rational numbers. The construction mimics that of the Farey tree,
generated by the Farey map, which we now recall.

Firstly, the \emph{Farey map} is the map $F:[0,1]\rightarrow [0,1]$
defined by setting
\[
    F(x) \coloneqq
    \begin{cases}
        \frac{x}{1-x} & \text{if } 0\leq x\leq \frac 12\\[0.2cm]
        \frac{1-x}{x} & \text{if } \frac 12 \leq x \leq 1\\
    \end{cases}
\]
We generate a binary tree using the map $F$ by defining the levels
$\mathcal{L}_n\coloneqq F^{-n}\left(\frac12\right)$, with the vertices
connected as shown in Figure~\ref{fig:Farey}.  Note that the two
``children'' of each vertex are not simply the inverse images of that
vertex. If we label a step down to the left with ``0'' and a step down
to the right with ``1'', the position of a given rational number
$\frac pq \in \mathcal{L}_n$ is described by a path
$\omega_1\cdots\omega_n$, with each $\omega_i\in\{0, 1\}$, and such
that if $F_0$ and $F_1$ denote the inverse branches of $F$, then
$\frac pq= F_{\omega_1}\circ\cdots\circ F_{\omega_n}\left(\frac
  12\right)$. One of the most important properties of the Farey tree
is that it contains all the rational numbers in the interval $(0,1)$,
and each rational number appears in the tree exactly once\footnote{In
  particular, a rational number $\frac pq\in \mathcal{L}_n$ if and
  only if its continued fraction expansion
  $\frac pq = [a_1,\,\ldots,\,a_r]$ with $a_r>1$ is such that
  $\sum_{i=1}^r a_i=n+2$.}. In other words,
$\bigcup_{k=0}^{+\infty} F^{-k}\left(\frac 12\right) = \Q\cap (0,1)$.

\begin{figure}[h]
    \begin{tikzpicture}[
    level 1/.style = {sibling distance=4cm},
    level 2/.style = {sibling distance=2cm},
    level 3/.style = {sibling distance=1cm},
    level distance          = 1cm,
    edge from parent/.style = {draw},
    scale=1.2
    ]

    \foreach \n in {0,1,2,3} {
      \pgfmathsetmacro\p{\n}
      \node at (5,-\n) {$\mathcal{L}_{\pgfmathprintnumber\p}$};
    }

    \node {$\frac 12$}
    child{
      node {$\frac 13$}
      child{
        node {$\frac 14$}
        child{
          node {$\frac 15$}
        }
        child{
          node {$\frac 27$}
        }
      }
      child{
        node {$\frac 25$}
        child{
          node {$\frac 38$}
        }
        child{
          node {$\frac 37$}
        }
      }
    }
    child{
      node {$\frac 23$}
      child{
        node {$\frac 35$}
        child{
          node {$\frac 47$}
        }
        child{
          node {$\frac 58$}
        }
      }
      child{
        node {$\frac 34$}
        child{
          node {$\frac 57$}
        }
        child{
          node {$\frac 45$}
        }
      }
    };
\end{tikzpicture}
    \caption{The first four levels of the Farey tree.}\label{fig:Farey}
\end{figure}
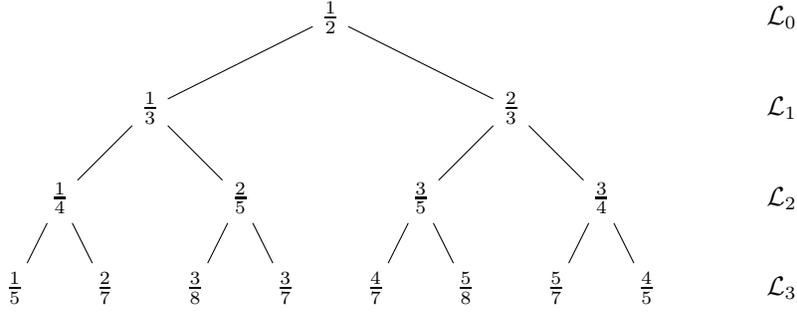

Another way to define the levels of the
Farey tree is by considering the Stern-Brocot sets
$(\mathcal{F}_n)_{n\geq -1}$, where we define
$\mathcal{F}_{-1}\coloneqq\left\{\frac01, \frac11\right\}$, and for
all $n\geq 0$, $\mathcal{F}_n$ is defined recursively from
$\mathcal{F}_{n-1}$ by inserting the mediant of each pair of
neighbouring fractions. Recall that the mediant of two fractions
$\frac pq$ and $\frac rs$ is defined to be
\[
    \frac pq \oplus \frac rs \coloneqq \frac{p+r}{q+s}.
\]
It is easy to verify that the mediant falls between the two rational
numbers it is computed from, that is if $\frac pq<\frac rs$ then
$\frac pq < \frac pq \oplus \frac rs < \frac rs$. The first few of the
Stern-Brocot sets are as follows:
\[
    \mathcal{F}_0 = \set{\frac 01, \frac 12, \frac 11},\quad
    \mathcal{F}_1 = \set{\frac 01, \frac 13, \frac 12, \frac 23, \frac
      11},\quad \mathcal{F}_2 = \set{\frac 01, \frac 14, \frac 13,
      \frac 25, \frac 12, \frac 35, \frac 23, \frac 34, \frac 11}.
\]
It is also straightforward to prove that
$\# \mathcal{F}_n = 2^{n+1}+1$, and that
$\mathcal{L}_n=\mathcal{F}_n\setminus\mathcal{F}_{n-1}$ for all
$n\geq 0$. For more details on the Farey tree, we refer to \cite{BI}.

We now describe the construction of our two-dimensional Farey-like
tree. We use the local inverse
$\phi_0: \trianglecl\setminus \{x=y\} \rightarrow \Gamma_0$, the
restricted local inverse
$\phi_1 : \trianglecl\setminus \Lambda \rightarrow \Gamma_1\setminus
\Lambda$ (which we will continue to call $\phi_1$), and a new map
$\phi_2: \{x=y \,:\, 0\le x\le 1\} \to \Lambda$ defined to be
$\phi_2(x,x)\coloneqq (x,0)$. The geometric action of $\phi_0$ and
$\phi_1$ is shown in Figure~\ref{fig:phi_maps}. These three maps are
the local inverses of the map
\[
    \tilde S:\trianglecl \rightarrow \trianglecl,\quad \tilde S(x,y)
    \coloneqq
    \begin{cases}
        S(x,y) & \text{if }(x,y)\in\trianglecl\setminus
        \Lambda\\[0.2cm]
        (x,x) & \text{if } (x,y)\in \Lambda
    \end{cases}.
\]
The map $\tilde S$ is a modified version of the map $S$ defined in
\eqref{eq-slow}.

\begin{figure}[h]
    \begin{tikzpicture}[scale=2.5]
    \def\t{1.6}

    \draw[very thin,fill=gray!20] (0,0) -- (1,0) -- (1,1) -- cycle;
    \draw[very thick] (0,0) -- (1,0) -- (1,1);
    \draw[very thick,dashed] (0,0) -- (1,1);

    \draw[very thin] (\t,0) -- (\t+1,0) -- (\t+1,1) -- cycle;
    \draw[very thick,fill=gray!20] (\t+1,0) -- (\t+1,1) -- (\t+0.5,0.5);
    \draw[very thick,dashed] (\t+0.5,0.5) -- (\t+1,0);

    \draw[-latex'] (1.15,0.4) -- (\t-0.15,0.4);
    \node[above] at ({(\t+1)/2},0.4) {$\phi_0$};
\end{tikzpicture}
\hspace{1.5cm}
\begin{tikzpicture}[scale=2.5]
    \def\t{1.6}

    \draw[very thin,fill=gray!20] (0,0) -- (1,0) -- (1,1) -- cycle;
    \draw[very thick] (1,0) -- (1,1) -- (0,0);
    \draw[very thick,dashed] (0,0) -- (1,0);

    \draw[very thin] (\t,0) -- (\t+1,0) -- (\t+1,1) -- cycle;
    \draw[very thick,fill=gray!20] (\t+1,0) -- (\t+0.5,0.5) -- (\t,0);
    \draw[very thick,dashed] (\t,0) -- (\t+1,0);

    \draw[-latex'] (1.15,0.4) -- (\t-0.15,0.4);
    \node[above] at ({(\t+1)/2},0.4) {$\phi_1$};

\end{tikzpicture}
    \caption{Geometric action of the maps $\phi_0$ and
      $\phi_1$.}\label{fig:phi_maps}
\end{figure}
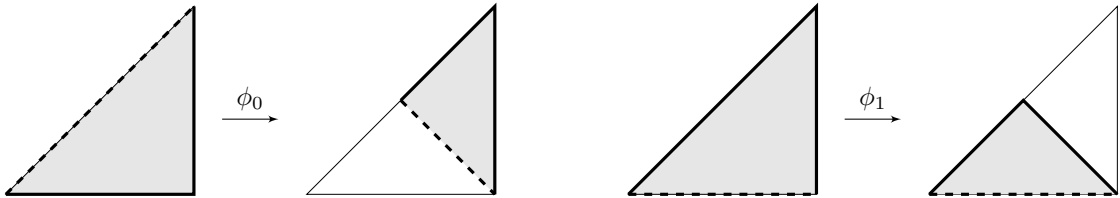

We now define the sequence $(\mathcal{T}_n)_{n\geq -1}$ of the levels
of the tree associated to $\tilde S$. First set
\[
    \mathcal{T}_{-1} \coloneqq \left\{(0,0),\,(1,0),\,(1,1)\right\}
    \quad\text{and}\quad
    \mathcal{T}_{0} \coloneqq \left\{ \singy{1}{2}{0},\,
      \singx{1}{1}{2},\, \triple{1}{1}{2} \right\},
\]
which include the vertices of the triangle $\triangle$ and the middle
points of the sides, respectively.

\begin{definition}
    For each $n\geq -1$, define
    $\mathcal{B}_n \coloneqq \mathcal{T}_n\cap \partial\triangle$ and
    $\mathcal{I}_n\coloneqq \mathcal{T}_n\cap
    \accentset{\circ}{\triangle}$ to be, respectively, the
    \emph{boundary points} and the \emph{interior points} of the
    $n$-th level of the tree. Moreover, we respectively denote with
    $\mathcal{B}_{\leq n}\coloneqq \bigcup_{k=-1}^{n} \mathcal{B}_k$
    and
    $\mathcal{I}_{\leq n}\coloneqq \bigcup_{k=-1}^{n} \mathcal{I}_k$
    the boundary and interior points of the tree up to level $n$.
\end{definition}

Clearly $\mathcal{B}_{-1}= \mathcal{T}_{-1}$ and
$\mathcal{B}_{0}= \mathcal{T}_{0}$. We now define precisely how the
levels of the tree are constructed, by showing all the possibilities
for taking counterimages depending on the location of the point in
$\trianglecl$. Let $n\geq 0$.

\begin{enumerate}[label={\upshape(R\arabic*)},wide = 0pt,leftmargin=*]
  \item\label{rule1} An interior point
    $\triple{p}{r}{q}\in \mathcal{I}_n$ generates the two interior
    points $\triple{q}{p}{r+q}$ and $\triple{p}{r}{r+q}$ in
    $\mathcal{I}_{n+1}$, through the application of $\phi_0$ and
    $\phi_1$, respectively.
  \item\label{rule2} A boundary point
    $\triple{p}{p}{q}\in \mathcal{B}_n$ generates the point
    $\singy{p}{q}{0}\in \mathcal{B}_n$ through the application of
    $\phi_2$ and the boundary point
    $\triple{p}{p}{p+q}\in \mathcal{B}_{n+1}$ through the application
    of $\phi_1$.
  \item\label{rule3} A boundary point
    $\singy{p}{q}{0}\in \mathcal{B}_n$ generates the point
    $\singx{1}{p}{q}\in \mathcal{B}_n$ through the application of
    $\phi_0$.
  \item\label{rule4} A boundary point
    $\singx{1}{p}{q}\in \mathcal{B}_n$ generates the boundary point
    $\triple{q}{q}{p+q}\in \mathcal{B}_{n+1}$ and the interior point
    $\triple{q}{p}{p+q}\in\mathcal{I}_{n+1}$, through the application
    of $\phi_0$ and $\phi_1$, respectively.
\end{enumerate}
The basic portions of the counterimages tree generated from a boundary
point and from an interior point are shown in
Figure~\ref{fig:rules}. Note that the points of the tree always have
rational coordinates, since we start from points with rational
coordinates and $\phi_0$, $\phi_1$ and $\phi_2$ are linear fractional
maps. Furthermore, taking a counterimage does not necessarily implies
that the level in the tree changes. Indeed, applying rules
\ref{rule1}, \ref{rule4}, and rule \ref{rule2} with $\phi_1$ makes the
level to increase, whereas applying the other rules does not change
the level. The levels $\mathcal{T}_0$, $\mathcal{T}_1$ and
$\mathcal{T}_2$ of the tree are shown in Figure~\ref{fig:tree} at the
end of the paper. Note that we always write the two fractions of each
pair reduced to their least common denominator, apart from the pairs
containing $0$ and/or $1$. This choice also has a geometric
motivation, as we shall remark after Definition~\ref{def:mediant}.

\begin{figure}[h]
    \begin{tikzpicture}[
    scale=0.95,
    level 2/.style = {sibling distance=2.25cm},
    level distance          = 2.5cm,
    edge from parent/.style = {draw,thin,-latex'},
    every node/.style       = {-latex'},
    ]

    \node at (-1.75,0) {$\mathcal{T}_n$};
    \node at (-1.75,-2.5) {$\mathcal{T}_{n+1}$};

    \node[inner sep=1pt,circle,fill=gray!20,right] at (0.1,-1.25)
    {\footnotesize R2};

    \node[inner sep=1pt,circle,fill=gray!20] at (8,-1.25)
    {\footnotesize R4};

    \node {$\triple{p}{p}{q}$} [grow=down]
    child [grow=down]{
      node {$\triple{p}{p}{p+q}$}
	  child [grow=right] {
          	node {$\singy{p}{p+q}{0}$}
            child [grow=right] {
              node {$\singx{1}{p}{p+q}$}
              edge from parent node[above] {$\phi_0$} node[inner sep=1pt,circle,fill=gray!20,below,yshift=-0.1cm] {\footnotesize R3}
			}
          	edge from parent node[above] {$\phi_2$} node[inner sep=1pt,circle,fill=gray!20,below,yshift=-0.1cm] {\footnotesize R2}
		  }
      edge from parent node[left] {$\phi_1$}
    }
    child[grow=right] {
      node at (1.5,0) {$\singy{p}{q}{0}$}
      child[grow=right] {node  at (1.5,0) {$\singx{1}{p}{q}$} [grow=down]
        child {
          node {$\triple{q}{p}{p+q}$}
          edge from parent node[left] {$\phi_1$}
        }
		child {
          node {$\triple{q}{q}{p+q}$}
		  child [grow=right] {
          	node {$\singy{q}{p+q}{0}$}
            child [grow=right] {
              node {$\singx{1}{q}{p+q}$}
              edge from parent node[above] {$\phi_0$} node[inner sep=1pt,circle,fill=gray!20,below,yshift=-0.1cm] {\footnotesize R3}
			}
          	edge from parent node[above] {$\phi_2$} node[inner sep=1pt,circle,fill=gray!20,below,yshift=-0.1cm] {\footnotesize R2}
		  }
          edge from parent node[right] {$\phi_0$}
        }
        edge from parent node[above] {$\phi_0$} node[inner sep=1pt,circle,fill=gray!20,below,yshift=-0.1cm] {\footnotesize R3}
      }
      {edge from parent node[above] {$\phi_2$} node[inner sep=1pt,circle,fill=gray!20,below,yshift=-0.1cm] {\footnotesize R2}}
    };
\end{tikzpicture}\\[0.75cm]
    \begin{tikzpicture}[
    scale=0.95,
    level 2/.style = {sibling distance=2.25cm},
    level 1/.style = {sibling distance=2.25cm},
    level distance          = 2.25cm,
    edge from parent/.style = {draw,-latex'},
    every node/.style       = {-latex'},
    ]

    \node at (-2.75,0) {$\mathcal{T}_n$};
    \node at (-2.75,-2.25) {$\mathcal{T}_{n+1}$};

    \node[inner sep=1pt,circle,fill=gray!20] at (0,-1.125)
    {\footnotesize R1};

    \node {$\triple{p}{r}{q}$} [grow=down]
    child {
      node {$\triple{q}{p}{r+q}$}
      edge from parent node[left] {$\phi_0$}
    }
    child {
      node {$\triple{p}{q}{r+q}$}
      edge from parent node[right] {$\phi_1$}
    };
\end{tikzpicture}
    \caption{\emph{Above}. Basic portion of the tree generated from a
      boundary point $\triple{p}{p}{q}\in
      \mathcal{B}_n$. \emph{Below}. Basic portion of the tree
      generated from an interior point
      $\triple{p}{r}{q}\in \mathcal{I}_n$.}\label{fig:rules}
\end{figure}
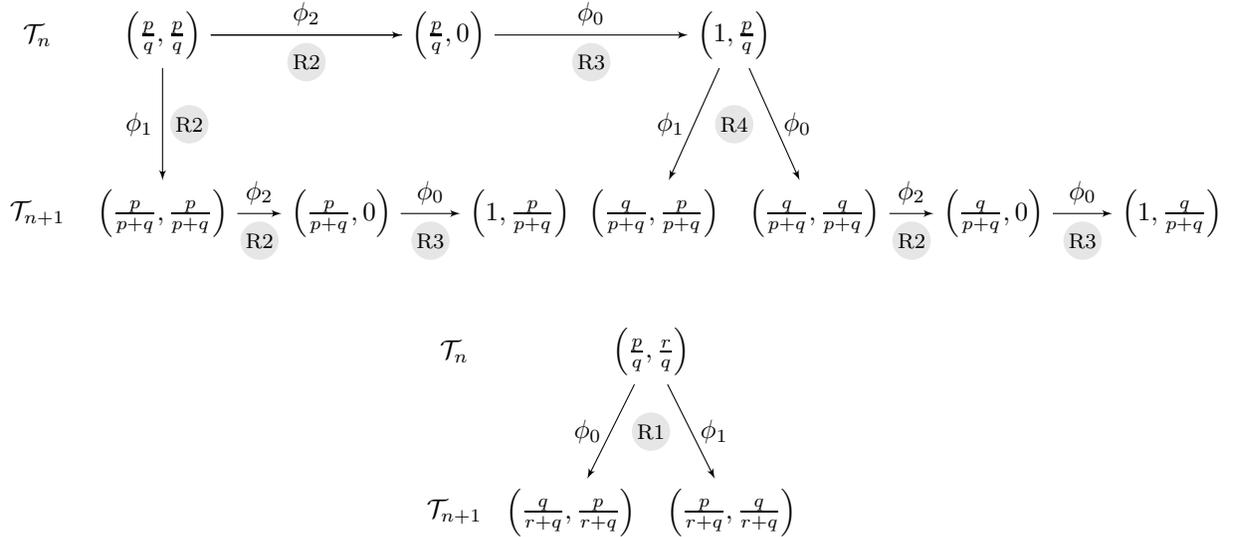

\begin{lemma}\label{lemma:count_Tn}
    For all $n\geq 0$ we have
    \[
        \#\mathcal{B}_n = 3\cdot 2^n\quad\text{and}\quad
        \#\mathcal{I}_n = n 2^{n-1},
    \]
    with the points of $\mathcal{B}_n$ equally distributed on the
    three sides of $\triangle$. As a consequence, the number of points
    of each level of the tree is given by
    \[
        \#\mathcal{T}_n = 3\cdot 2^n + n 2^{n-1}.
    \]
\end{lemma}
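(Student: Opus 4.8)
The plan is to prove the formulas by induction on $n$, using the four rules (R1)–(R4) to track how each point of level $n$ contributes to level $n+1$. The key observation is that the rules tell us not just about counterimages but specifically about how the \emph{level} changes, so we should reformulate them as a bookkeeping of how many new boundary and interior points each type of point produces.

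First I would classify the points of the tree into three types according to the rules: interior points of the form $\triple{p}{r}{q}$ (call these type-$\mathcal{I}$), and two kinds of boundary points, those of the form $\triple{p}{p}{q}$ on the diagonal side $\{x=y\}$ (type $\mathcal{B}^{\mathrm{d}}$), those of the form $\singy{p}{q}{0}$ on the bottom side $\{y=0\}$ (type $\mathcal{B}^{\mathrm{b}}$), and those of the form $\singx{1}{p}{q}$ on the right side $\{x=1\}$ (type $\mathcal{B}^{\mathrm{r}}$). From the rules one reads off the level-preserving moves: a $\mathcal{B}^{\mathrm{d}}$-point produces via $\phi_2$ a $\mathcal{B}^{\mathrm{b}}$-point at the \emph{same} level (R2), which via $\phi_0$ produces a $\mathcal{B}^{\mathrm{r}}$-point at the same level (R3). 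The level-increasing moves are: a $\mathcal{B}^{\mathrm{d}}$-point produces via $\phi_1$ one new $\mathcal{B}^{\mathrm{d}}$-point at level $n+1$; a $\mathcal{B}^{\mathrm{r}}$-point produces via $\phi_0$ a new $\mathcal{B}^{\mathrm{d}}$-point and via $\phi_1$ a new $\mathcal{I}$-point at level $n+1$ (R4); and an $\mathcal{I}$-point produces via $\phi_0$ and $\phi_1$ two new $\mathcal{I}$-points at level $n+1$ (R1). The crucial structural fact I need is that each level $n$ with $n\ge 0$ has its $\#\mathcal{B}_n$ boundary points split evenly into $2^n$ points of each of the three types, and that no point is produced twice—i.e. the map from (level-$n$ points $+$ choice of branch) to (level-$(n+1)$ points) is a bijection onto $\mathcal{T}_{n+1}$. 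Granting this, the recursions are immediate: each $\mathcal{B}^{\mathrm{d}}$- and $\mathcal{B}^{\mathrm{r}}$-point at level $n$ contributes one $\mathcal{B}^{\mathrm{d}}$-point to level $n+1$ (and the $\mathcal{B}^{\mathrm{b}}$-points contribute nothing new), so $\#\mathcal{B}^{\mathrm{d}}_{n+1} = \#\mathcal{B}^{\mathrm{d}}_n + \#\mathcal{B}^{\mathrm{r}}_n = 2\cdot 2^n$, and then by (R2)–(R3) $\#\mathcal{B}^{\mathrm{b}}_{n+1}=\#\mathcal{B}^{\mathrm{r}}_{n+1}=\#\mathcal{B}^{\mathrm{d}}_{n+1}=2^{n+1}$, giving $\#\mathcal{B}_{n+1}=3\cdot 2^{n+1}$. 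For the interior count, level $n+1$ receives $2\#\mathcal{I}_n$ points from (R1) plus $\#\mathcal{B}^{\mathrm{r}}_n = 2^n$ points from (R4), so $\#\mathcal{I}_{n+1}=2\cdot n2^{n-1}+2^n=(n+1)2^n$, completing the induction. The base case $n=0$ is checked directly from $\mathcal{T}_0$: $\#\mathcal{B}_0=3$ with one point on each side, $\#\mathcal{I}_0=0$.

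The hard part will be justifying the two structural claims that make the bookkeeping valid: first, that the three types of boundary points are \emph{equidistributed} across the three sides at every level (this is what lets us pass from the recursion for $\#\mathcal{B}^{\mathrm{d}}$ to the full $\#\mathcal{B}_n=3\cdot 2^n$), and second, that the construction produces \emph{no repetitions}, so that the counts are genuine cardinalities rather than mere upper bounds. The equidistribution I would fold into the induction hypothesis and verify it is preserved by the rules, tracking carefully that the $\phi_2$ and $\phi_0$ moves in (R2)–(R3) bijectively carry the diagonal-side points to the bottom-side points and then to the right-side points within the same level—so $\mathcal{B}_n$ really is partitioned as $\mathcal{B}^{\mathrm{d}}_n \sqcup \mathcal{B}^{\mathrm{b}}_n \sqcup \mathcal{B}^{\mathrm{r}}_n$ with equal parts. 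For the no-repetition claim, the natural approach is to observe that the three inverse branches $\phi_0,\phi_1,\phi_2$ of $\tilde S$ have disjoint images ($\Gamma_0\setminus\Lambda$, $\Gamma_1\setminus\Lambda$, and $\Lambda$ up to the diagonal identifications), so a point and its ``generation history'' determine the branch taken at each step; combined with injectivity of each branch, distinct points at level $n$ with distinct branch choices yield distinct points downstream. A point requiring slightly more care is that a point can be reached by a level-preserving move \emph{and} also be the root of level-increasing moves, so one must be sure the within-level moves (R2), (R3) do not create a point already counted—but this again follows from the image-disjointness of $\phi_0,\phi_1,\phi_2$ together with the fact that within-level moves strictly change the side of $\partial\triangle$ on which the point lies.
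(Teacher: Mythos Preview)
Your proposal is correct and follows essentially the same inductive bookkeeping as the paper's proof: both track the boundary points by side of $\partial\triangle$, use the level-increasing moves from $\mathcal{B}^{\mathrm{d}}_n$ and $\mathcal{B}^{\mathrm{r}}_n$ to produce the $2^{n+1}$ diagonal points at level $n+1$, then propagate via the level-preserving moves (R2)--(R3) to the other two sides, and both obtain $\#\mathcal{I}_{n+1}=2\#\mathcal{I}_n+\#\mathcal{B}^{\mathrm{r}}_n$ from (R1) and (R4). The only notable difference is that you explicitly isolate and justify the no-repetition claim via image-disjointness of $\phi_0,\phi_1,\phi_2$, whereas the paper tacitly assumes distinctness of the generated points; your version is in this respect slightly more complete.
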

\begin{proof}
    We argue by induction on $n\geq 0$. If $n=0$ we have
    $\mathcal{B}_0 = \mathcal{T}_0$ and
    $\mathcal{I}_0=\emptyset$. Thus $\#\mathcal{B}_0=3$, with one
    point on each side of $\triangle$, and $\#\mathcal{I}_0=0$: the
    base case is proved. Suppose $\#\mathcal{B}_n = 3\cdot 2^n$, with
    $2^n$ points on each side of $\triangle$,
    $\#\mathcal{I}_n = n 2^{n-1}$ for some $n\geq 0$ and consider the
    subsequent level of the tree. The points of $\mathcal{B}_{n+1}$
    can be obtained from those of $\mathcal{B}_n$ as follows (refer to
    Figure~\ref{fig:rules}):
    \begin{itemize}
      \item $\mathcal{B}_n$ contains $2^n$ points on
        $\trianglecl\cap \{x=y\}$, each of which gives $3$ points in
        $\mathcal{B}_{n+1}$, one per side, applying \ref{rule2} with
        $\phi_1$, followed by \ref{rule2} with $\phi_2$, and
        \ref{rule3};
      \item $\mathcal{B}_n$ contains $2^n$ points on
        $\trianglecl\cap \{x=1\}$, each of which gives $3$ points in
        $\mathcal{B}_{n+1}$, one per side, applying \ref{rule4},
        followed by \ref{rule2} with $\phi_2$, and \ref{rule3}.
    \end{itemize}
    Note that the other $2^n$ points contained in $\mathcal{B}_n$ lie
    on the line $\{y=0\}$, and these points are all mapped back inside
    $\mathcal{B}_n$, in light of rule \ref{rule3}. Therefore,
    $\#\mathcal{B}_{n+1} = 2^n\cdot 3 + 2^n\cdot 3 = 3\cdot
    2^{n+1}$. Furthermore, by construction, we have $2^{n+1}$ points
    of $\mathcal{B}_{n+1}$ on each side of the triangle, so that the
    points of $\mathcal{B}_{n+1}$ are equally distributed on the three
    sides of $\triangle$.  The points of $\mathcal{I}_{n+1}$ are
    obtained from those of $\mathcal{T}_n$ in this way:
    \begin{itemize}
      \item each point in $\mathcal{I}_n$ generates two points in
        $\mathcal{I}_{n+1}$, according to rule \ref{rule1};
      \item each point in $\mathcal{B}_n\cap \{x=1\}$ gives one point
        in $\mathcal{I}_{n+1}$, using rule \ref{rule4}.
    \end{itemize}
    As a consequence
    $\#\mathcal{I}_{n+1} = n2^{n-1}\cdot 2 + 2^n = (n+1)2^n$. The
    inductive step is proved and the proof is complete.
\end{proof}

\begin{lemma}\label{lemma:char_Bn}
    Let $F:[0,1]\rightarrow [0,1]$ be the Farey map. For all $n\geq 0$
    we have
    \[
        \mathcal{B}_n = \set{\triple{p}{p}{q}, \singy{p}{q}{0},
          \singx{1}{p}{q} \,:\, \frac pq \in F^{-n}\left(\frac
            12\right)}.
    \]
\end{lemma}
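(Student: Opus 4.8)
The plan is to argue by induction on $n\geq 0$, using the explicit inverse branches of the Farey map, $F_0(x)=\frac{x}{1+x}$ and $F_1(x)=\frac{1}{1+x}$, so that the two children in the Farey tree of a fraction $\frac pq\in F^{-n}\!\left(\frac12\right)$ are $F_0\!\left(\frac pq\right)=\frac{p}{p+q}$ and $F_1\!\left(\frac pq\right)=\frac{q}{p+q}$, whence $F^{-(n+1)}\!\left(\frac12\right)=\bigcup_{p/q\in F^{-n}(1/2)}\left\{\frac{p}{p+q},\,\frac{q}{p+q}\right\}$ by definition of iterated preimage. The base case $n=0$ is a direct check: $F^{-0}\!\left(\frac12\right)=\left\{\frac12\right\}$ and, by definition, $\mathcal{B}_0=\mathcal{T}_0=\left\{\triple{1}{1}{2},\,\singy{1}{2}{0},\,\singx{1}{1}{2}\right\}$, which is exactly the claimed set.

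For the inductive step I would assume the formula for $\mathcal{B}_n$ and read off $\mathcal{B}_{n+1}$ from rules \ref{rule1}--\ref{rule4}, keeping track of which applications of $\phi_0,\phi_1,\phi_2$ raise the level. First I would identify the diagonal points of $\mathcal{B}_{n+1}$, those of the form $\triple{p'}{p'}{q'}$. By the inductive hypothesis every diagonal point of $\mathcal{B}_n$ is $\triple{p}{p}{q}$ with $\frac pq\in F^{-n}\!\left(\frac12\right)$, and rule \ref{rule2} applied to it via $\phi_1$ produces the level-$(n+1)$ point $\triple{p}{p}{p+q}=\left(F_0\!\left(\tfrac pq\right),F_0\!\left(\tfrac pq\right)\right)$; likewise every right-hand point of $\mathcal{B}_n$ is $\singx{1}{p}{q}$ with $\frac pq\in F^{-n}\!\left(\frac12\right)$, and rule \ref{rule4} applied to it via $\phi_0$ produces the level-$(n+1)$ point $\triple{q}{q}{p+q}=\left(F_1\!\left(\tfrac pq\right),F_1\!\left(\tfrac pq\right)\right)$. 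Since $\gcd(p,q)=1$ forces $\gcd(p,p+q)=\gcd(q,p+q)=1$, these fractions are automatically in lowest terms, consistently with the convention used for the tree. As $\frac pq$ runs over $F^{-n}\!\left(\frac12\right)$ the resulting fractions fill out exactly $F^{-(n+1)}\!\left(\frac12\right)$, so the diagonal part of $\mathcal{B}_{n+1}$ is $\left\{\triple{p'}{p'}{q'}:\frac{p'}{q'}\in F^{-(n+1)}\!\left(\frac12\right)\right\}$.

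It then remains to attach to each diagonal point $\triple{p'}{p'}{q'}$ of $\mathcal{B}_{n+1}$ its two companions on the other sides, which occur at the \emph{same} level: rule \ref{rule2} via $\phi_2$ gives $\singy{p'}{q'}{0}\in\mathcal{B}_{n+1}$, and then rule \ref{rule3} via $\phi_0$ gives $\singx{1}{p'}{q'}\in\mathcal{B}_{n+1}$. To finish the set equality I would observe that no further boundary points appear at level $n+1$: rule \ref{rule1} and the $\phi_1$-branch of rule \ref{rule4} only create interior points, while the $\phi_2$-branch of rule \ref{rule2} and rule \ref{rule3} do not raise the level. A count against Lemma~\ref{lemma:count_Tn}, which gives $\#\mathcal{B}_{n+1}=3\cdot 2^{n+1}$ equally split among the three sides while $\#F^{-(n+1)}\!\left(\frac12\right)=2^{n+1}$, confirms that nothing is missing or repeated and closes the induction.

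I expect the only real difficulty to be the bookkeeping: one must keep straight which inverse branch of $\tilde S$ acts on which side of $\triangle$ and which Farey inverse branch it corresponds to — in particular that the diagonal child of $\triple{p}{p}{q}$ produced by \ref{rule2} realises $F_0$ whereas the diagonal child produced from $\singx{1}{p}{q}$ by \ref{rule4} realises $F_1$ — and must use the level-changing pattern encoded in \ref{rule1}--\ref{rule4} consistently so that each element of $\mathcal{B}_{n+1}$ is accounted for exactly once.
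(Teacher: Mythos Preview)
Your proposal is correct and follows essentially the same approach as the paper: both argue by induction, both identify the two diagonal children $\triple{p}{p}{p+q}$ and $\triple{q}{q}{p+q}$ via rules \ref{rule2} and \ref{rule4} as corresponding to the two Farey inverse branches, both then attach the companions on the other sides via \ref{rule2}--\ref{rule3}, and both close with the cardinality count from Lemma~\ref{lemma:count_Tn}. The only cosmetic difference is that the paper phrases the induction via the forward Farey map (starting from $\frac rs\in F^{-(n+1)}(\tfrac12)$ and splitting according to whether $\frac rs\le\tfrac12$), whereas you phrase it via the inverse branches $F_0,F_1$; these are the same computation read in opposite directions.
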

\begin{proof}
    We claim that it is suffices to prove that for all $n\geq 0$
    \begin{equation}\label{Bn}
        \set{\triple{p}{p}{q} \,:\, \frac pq \in F^{-n}\left(\frac
            12\right)}\subseteq \mathcal{B}_n.
    \end{equation}
    Indeed by \ref{rule2} and \ref{rule3}, it easily follows that
    $\set{\triple{p}{p}{q}, \singy{p}{q}{0}, \singx{1}{p}{q} \,:\,
      \frac pq \in F^{-n}\left(\frac 12\right)}\subseteq
    \mathcal{B}_n$ for all $n\geq 0$, and since from
    Lemma~\ref{lemma:count_Tn} we have $\#\mathcal{B}_n=3\cdot 2^n$
    for all $n\geq 0$, the claim is proved. We now argue by induction
    to prove that \eqref{Bn} holds for all $n\geq 0$. If $n=0$ we have
    $\triple{1}{1}{2}\in \mathcal{B}_0$, thus the base case is
    proved. Now suppose that \eqref{Bn} holds for some $n\geq 0$ and let
    $\frac rs\in F^{-(n+1)}\left(\frac 12\right)$, so that
    $F\left(\frac rs\right)\in F^{-n}\left(\frac 12\right)$. In order
    to prove that $\triple{r}{r}{s}\in \mathcal{B}_{n+1}$ we
    distinguish between two cases.
    \begin{itemize}
      \item If $0\leq \frac rs \leq \frac 12$ then
        $F\left(\frac rs\right)=\frac r{s-r}$, and by the induction
        hypothesis $\triple{r}{r}{s-r}\in \mathcal{B}_n$. By rule
        \ref{rule2} we then have that
        $\phi_1\triple{r}{r}{s-r} = \triple{r}{r}{s}\in
        \mathcal{B}_{n+1}$.
      \item If $\frac 12 <\frac rs\leq 1$ then
        $F\left(\frac rs\right)=\frac {s-r}r$. By the induction
        hypothesis we have that
        $\triple{s-r}{s-r}{r}\in \mathcal{B}_n$ and, by construction,
        we also have that $\singx{1}{s-r}{r}\in \mathcal{B}_n$. Hence
        rule \ref{rule4} yields that
        $\phi_0\singx{1}{s-r}{r} = \triple{r}{r}{s}\in
        \mathcal{B}_{n+1}$.
    \end{itemize}
\end{proof}

We are now in a position to prove the first main result of this
section.

\begin{theorem} \label{thm:tree-compl}
    The tree defined by the level sets $\mathcal{T}_n$ is
    complete, that is,
    \[
        \bigcup_{n\geq -1} \mathcal{T}_n = \Q^2 \cap \trianglecl,
    \]
    and every pair of rational numbers appears in the tree
    exactly once.
\end{theorem}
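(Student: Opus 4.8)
The plan is to derive both assertions from a single structural fact: \emph{every rational pair that is not a vertex has a unique predecessor in the tree, namely its image under $\tilde S$ read through the appropriate inverse branch}. I would first dispose of the boundary. By Lemma~\ref{lemma:char_Bn}, for each $n\ge 0$ the set $\mathcal{B}_n$ is in bijection with $F^{-n}\!\left(\frac 12\right)$ via the three maps $\frac pq\mapsto\triple{p}{p}{q}$, $\frac pq\mapsto\singy{p}{q}{0}$, $\frac pq\mapsto\singx{1}{p}{q}$, whose images lie on the three sides of $\triangle$. Since $\bigcup_{n\ge 0}F^{-n}\!\left(\frac 12\right)=\Q\cap(0,1)$, with each such rational at a single level of the Farey tree, $\bigcup_{n\ge 0}\mathcal{B}_n$ is precisely the set of rational points of $\partial\triangle$ having a coordinate in $(0,1)$, each occurring once; together with the three vertices of $\mathcal{T}_{-1}$ this accounts for every rational point of $\partial\triangle$ exactly once. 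As $\accentset{\circ}{\triangle}$ and $\partial\triangle$ are disjoint, it then remains only to treat interior rational pairs.

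The core step is the predecessor statement. The inverse branches $\phi_0\colon\trianglecl\setminus\{x=y\}\to\Gamma_0$, $\phi_1\colon\trianglecl\setminus\Lambda\to\Gamma_1\setminus\Lambda$ and $\phi_2\colon\{x=y\}\to\Lambda$ are injective and their images $\Gamma_0$, $\Gamma_1\setminus\Lambda$, $\Lambda$ partition $\trianglecl$; hence each $P\in\trianglecl$ lies in the image of exactly one $\phi_i$, and then $P=\phi_i(\tilde S(P))$, and no point other than $\tilde S(P)$ can have $P$ as a $\phi_i$-image. Inspecting rules \ref{rule1}--\ref{rule4}, one sees they use only $\phi_0,\phi_1,\phi_2$, and that a node $Q$ of each of the four possible kinds (interior; on $\{x=y\}$; on $\{y=0\}$; on $\{x=1\}$) triggers exactly one rule, which produces precisely the points $\phi_i(Q)$ for all $i$ at which $\phi_i$ is defined on $Q$; moreover these are pairwise distinct (in each formula one of the two generated points lies on $\{x=y\}$ or $\{y=0\}$ and the other does not, or else $p\ne q$ separates their first coordinates). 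It follows that a non-vertex rational pair $P$ lies in the tree \emph{if and only if} $\tilde S(P)$ does, and in that case $P$ occupies one, and only one, child position below the (necessarily unique) occurrence of $\tilde S(P)$. Here one must pay attention to the common edge $\{x+y=1\}$ of $\Gamma_0$ and $\Gamma_1$, which lies in $\Gamma_1$: the ``missing'' $\phi_0$-preimage of a diagonal point reappears instead as the $\phi_1$-child of a point of $\{x=1\}$ via \ref{rule4}, and the rules are designed to make exactly this happen.

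Next I would check that the $\tilde S$-orbit of every rational pair reaches $\mathcal{T}_{-1}=\{(0,0),(1,0),(1,1)\}$ after finitely many steps; write $\ell(P)$ for the number of steps, so that $\ell(\tilde S(P))=\ell(P)-1$ for non-vertex $P$. For an interior pair $P=\triple{p}{r}{q}$ written with common denominator $q$ (so $q>p>r>0$) a one-line computation gives $\tilde S(P)=S(P)$ equal to $\triple{r}{q-p}{p}$ when $p+r>q$, and to $\triple{p}{r}{q-r}$ or $\singx{1}{r}{p}$ when $p+r<q$ or $p+r=q$; in every case the denominator strictly drops, so the orbit leaves $\accentset{\circ}{\triangle}$ in finitely many steps, and it can only do so onto $\{x=1\}$ --- equivalently, rational pairs have finite triangle sequences, as recalled in Section~\ref{sec:ts}. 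From a boundary point the orbit cycles through the sides as $\{x=1\}\to\{y=0\}\to\{x=y\}$, and on $\{x=y\}$ the map $\tilde S$ realises the Farey map $F$ on the underlying fraction (directly if it is $\le\frac 12$, after a detour through the other two sides if it is $>\frac 12$); since $F$ lowers the Farey depth, the orbit reaches $(1/2,1/2)$ and then $(1,1)$. Hence $\ell$ is finite and one concludes by strong induction on $\ell(P)$: the six points of $\mathcal{T}_{-1}\cup\mathcal{T}_0$ are distinct and, by Lemma~\ref{lemma:char_Bn} (whose associated fractions all lie in $(0,1)$), none of them reappears at a later level, so each occurs exactly once; while for any other rational pair $P$ one has $\ell(P)\ge 1$, so $P$ occurs in the tree (by the boundary case together with the descent above, tracing the rules back up) and occurs exactly once (by the predecessor statement applied to $\tilde S(P)$, to which the inductive hypothesis applies). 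This gives completeness and the uniqueness of occurrences simultaneously.

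The step I expect to be the main obstacle is the predecessor statement of the second paragraph: one must verify, case by case on the four kinds of node and with the degenerate vertices and the shared edge $\{x+y=1\}$ of $\Gamma_0$ and $\Gamma_1$ kept in view, that rules \ref{rule1}--\ref{rule4} --- which deliberately do \emph{not} take all set-theoretic $\tilde S$-preimages, since $\phi_0$ is undefined on $\{x=y\}$ and $\phi_1,\phi_2$ on parts of $\partial\triangle$ --- reproduce exactly the clean predecessor structure coming from the partition $\Gamma_0\sqcup(\Gamma_1\setminus\Lambda)\sqcup\Lambda=\trianglecl$. Everything else is bookkeeping of denominators together with the already-established completeness of the Farey tree.
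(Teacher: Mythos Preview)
Your proposal is correct and follows essentially the same route as the paper: boundary points via Lemma~\ref{lemma:char_Bn} and the completeness of the Farey tree, interior points via termination of the triangle sequence (your denominator-descent is exactly this), and uniqueness via the injectivity and disjoint images of the local inverses. The paper compresses all of this into four lines --- for completeness it just observes that an interior rational pair is an $S$-preimage of some $\singx{1}{a}{b}$, and for uniqueness it merely says ``clear, considering the geometric action of the maps $\phi_i$'' --- so your explicit predecessor statement (the partition $\Gamma_0\sqcup(\Gamma_1\setminus\Lambda)\sqcup\Lambda$ matched against the four rules) and your induction on the hitting time $\ell(P)$ are a rigorous unpacking of exactly what the paper leaves to the reader.
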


\begin{proof}
    We have shown in Lemma~\ref{lemma:char_Bn} that every pair of
    rational numbers of the form $\singx{1}{a}{b}$, with
    $0\leq \frac ab\leq 1$, appears in some set $\mathcal{B}_n$. Let
    $\triple{p}{r}{q}$ be an arbitrary pair of rational numbers in the
    interior of the triangle $\triangle$. Then, as noted in Section
    \ref{sec:ts}, this point has a terminating triangle sequence, say
    $(\alpha_1,\,\ldots,\,\alpha_m)$. Then it follows that
    $S^{\sum_{i=1}^m \alpha_i+m-1}\triple{p}{r}{q} = \singx{1}{a}{b}$,
    for some rational number $\frac ab$. Thus, as a backward image of
    the point $\singx{1}{a}{b}$ under $S$, our original, arbitrary,
    point must lie in the tree. Moreover, that each point appears
    exactly once is clear, considering the geometric action of the
    maps $\phi_i$.
\end{proof}

At this point, we will begin the description of a tree of points from
the triangle using a mediant operation on pairs of rational
numbers. This will then be shown to be equivalent to the description
of our two-dimensional Farey tree given above in terms of
counterimages.

\begin{definition}
    \label{def:mediant}
    Let $\triple{p}{r}{q}$ and $\triple{p'}{r'}{q'}$ be two couples of
    fractions. We define their \emph{mediant} to be
    \[
        \triple{p}{r}{q} \oplus \triple{p'}{r'}{q'} \coloneqq
        \left(\frac pq \oplus \frac{p'}{q'},\frac rq \oplus
          \frac{r'}{q'}\right)= \couple{p+p'}{q+q'}{r+r'}{q+q'}.
    \]
\end{definition}

\noindent Note that we require that the two fractions of each couple
have the same denominator, in order that the mediant of two points
lies on the open segment joining the two points. In what follows we
always assume that the two fractions of each couple are reduced to
their least common denominator.

\begin{definition}\label{def:farey_sum}
    Let $\mathfrak{S}\subseteq \R^2$ be a line segment, and let
    $\mathfrak{R}\subseteq \mathfrak{S}\cap \Q^2$ be a finite subset
    of rational points on $\mathfrak{S}$. Let
    $r\coloneqq \#\mathfrak{R}$, with $2\leq r<+\infty$, and write
    $\mathfrak{R}=\{\mathfrak{r}_i \,:\, i=1,\,\ldots,\,r\}$ with
    $\mathfrak{r}_i\leq_{lex} \mathfrak{r}_{i+1}$ for all
    $i=1,\,\ldots,\,r-1$, where $\leq_{lex}$ is the lexicographic
    order on $\R^2$. We define the \emph{Farey sum of $\mathfrak{R}$}
    to be the set $\mathfrak{R}^\oplus$, where
    \[
        \mathfrak{R}^\oplus \coloneqq \{\mathfrak{r}_i\oplus
        \mathfrak{r}_{i+1} \,:\, i=1,\,\ldots,\,r-1\} \cup
        \mathfrak{R}.
    \]
\end{definition}

To simplify the presentation, in what follows the maps $\phi_0$ and
$\phi_1$ are extended to $\trianglecl$.

\begin{lemma}\label{lemma:preserve_Fsum}
    The maps $\phi_0$ and $\phi_1$ preserve the mediant of any two
    rational pairs in their respective domains.
\end{lemma}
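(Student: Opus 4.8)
The plan is a direct computation using the explicit linear-fractional formulas for $\phi_0$ and $\phi_1$, exploiting the normalisation that both fractions in each pair share a common denominator. Take two rational pairs written over common denominators, say $P = \triple{p}{r}{q}$ and $P' = \triple{p'}{r'}{q'}$. By Definition~\ref{def:mediant} their mediant is $M = \triple{p+p'}{r+r'}{q+q'}$, and the point of the lemma is that $\phi_j(P) \oplus \phi_j(P') = \phi_j(M)$ for $j=0,1$. First I would record, from the formulas $\phi_0(x,y) = \left(\frac{1}{1+y},\frac{x}{1+y}\right)$ and $\phi_1(x,y) = \left(\frac{x}{1+y},\frac{y}{1+y}\right)$, what these maps do to a pair written over a common denominator: $\phi_1\triple{p}{r}{q} = \triple{p}{r}{q+r}$ and $\phi_0\triple{p}{r}{q} = \triple{q}{p}{q+r}$ (these are exactly the numerator/denominator transformations appearing in rules~\ref{rule1}--\ref{rule4}, so they are already implicit in the paper). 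The key observation is that each of these is a \emph{linear} map on the integer triple $(p,r,q)$ — no gcd-reduction is forced, because if $\gcd(p,r,q)=1$ and similarly for $(p',r',q')$ then the image triples are again the genuine common-denominator representatives (or at worst differ by a harmless common factor that cancels in the mediant).

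The main step is then purely algebraic. For $\phi_1$: applying it to $P$, $P'$, $M$ gives triples $(p,r,q+r)$, $(p',r',q'+r')$, $(p+p',r+r',q+q'+r+r')$ respectively; the mediant of the first two is $(p+p', r+r', (q+r)+(q'+r'))$, which equals the third triple coordinatewise. Hence $\phi_1(P)\oplus\phi_1(P') = \phi_1(M)$. For $\phi_0$: the triples are $(q,p,q+r)$, $(q',p',q'+r')$, $(q+q', p+p', q+q'+r+r')$; again the mediant of the first two is $(q+q', p+p', (q+r)+(q'+r'))$, matching the third. So in both cases the identity reduces to the trivial fact that the componentwise sum of two integer triples commutes with the fixed linear substitution defining $\phi_j$ on triples. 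The only point needing a sentence of care is the reduction-to-least-common-denominator convention: I would check that when $(p,r,q)$ and $(p',r',q')$ are each in lowest terms, the triple $\phi_j$ produces is still the correct representative, or note that the mediant operation of Definition~\ref{def:mediant} is insensitive to scaling each input triple by a positive integer, so no loss of generality is incurred.

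I do not expect a genuine obstacle here; the content of the lemma is precisely that $\phi_0,\phi_1$ act linearly on projective/homogeneous coordinates, under which "mediant" is just vector addition. The one thing to be slightly careful about — and the place a referee might object — is whether $\phi_0$ and $\phi_1$, extended to all of $\trianglecl$ as announced just before the lemma, are still well-defined on the relevant rational pairs and whether the common-denominator hypothesis is genuinely preserved; I would dispatch this by the remark that scaling an input triple $(p,r,q) \mapsto (\lambda p, \lambda r, \lambda q)$ leaves both $\phi_j$ (as a map of points in $\trianglecl$) and $\oplus$ unchanged, so one may freely pass between the reduced representative and any convenient multiple, and then the linearity computation above closes the argument.
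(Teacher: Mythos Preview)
Your proposal is correct and takes essentially the same approach as the paper: a direct computation using the explicit action of $\phi_0$ and $\phi_1$ on a pair written over a common denominator, verifying that the resulting triples satisfy the mediant identity coordinatewise. The paper's version is terser (it writes out only the $\phi_0$ case and leaves $\phi_1$ to the reader, and does not linger over the common-denominator convention), but the argument is identical in substance.
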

\begin{proof}
    Let $\triple{p}{r}{q}$ and $\triple{p'}{r'}{q'}$ be two rational
    points in the domain of $\phi_0$. Then
    \[
        \phi_0\left(\triple{p}{r}{q}\oplus \triple{p'}{r'}{q'}\right) =
        \phi_0 \triple{p+p'}{r+r'}{q+q'} = \triple{q+q'}{p+p'}{r+q+r'+q'}
    \]
    and
    \[
        \phi_0\triple{p}{r}{q}\oplus \phi_0\triple{p'}{r'}{q'} =
        \triple{q}{p}{r+q} \oplus \triple{q'}{p'}{r'+q'} =
        \triple{q+q'}{p+p'}{r+q+r'+q'}.
    \]
    An analogous computation can be done for the map $\phi_1$: we
    leave the details to the reader.
\end{proof}

\noindent Let us observe here that, since $\phi_0$ and $\phi_1$
preserve the mediant operation and are also monotonic along line
segments (with respect to the lexicographic order), we have that for
$i=0,1$,
\begin{align}\label{eq:f_Roplus}
    \begin{split}
        \phi_i(\mathfrak{R})^\oplus &=
        \set{\phi_i(\mathfrak{r}_i)\oplus \phi_i(\mathfrak{r}_{i+1})
          \,:\, i=1,\,\ldots,\,r-1}\cup
        \phi_i(\mathfrak{R})=\\
        &= \set{ \phi_i(\mathfrak{r}_i\oplus \mathfrak{r}_{i+1}) \,:\,
          i=1,\,\ldots,\,r-1}\cup \phi_i(\mathfrak{R}) =
        \phi_i\left(\mathfrak{R}^\oplus\right).
    \end{split}
\end{align}

In order to give the definition of a tree of mediants, we first define
a sequence of measurable partitions $(\mathscr{P}_n)_{n\geq 0}$ of
$\trianglecl$, such that $\mathscr{P}_n$ consists of $2^n$
subtriangles of $\trianglecl$ and each $\mathscr{P}_n$ is a refinement
of the previous $\mathscr{P}_{n-1}$. Let $\mathscr{P}_0$ be the whole
triangle $\trianglecl$. The three vertices of $\trianglecl$ are
labelled with ``$0$'', ``$1$'' and ``$2$'' as follows:
\[
    v_0=(0,0)=\triple{0}{0}{1},\quad v_1=(1,0)=\triple{1}{0}{1},\quad
    v_2=(1,1)=\triple{1}{1}{1}.
\]
Taking the Farey sum between $v_0$ and $v_2$ one obtains
\[
    v_0\oplus v_2 = \triple{1}{1}{2}.
\]
We partition the triangle $\trianglecl$ into two subtriangles by the
line segment joining $v_1$ and $v_0\oplus v_2$. This determines the
partition $\mathscr{P}_1$. Moreover, we label the vertices of the two
subtriangles according to the geometric rule shown in
Figure~\ref{fig:relabel}: that is, the new vertex is labelled ``2'' in
both the subtriangles, the other vertices of the subtriangle
containing the old vertex ``0'' remain as they were, whereas in the
subtriangle containing the old vertex ``2'', this ``2'' becomes a
``1'' and the remaining vertex is labelled ``0'' (note that, in this
second subtriangle, this can be seen as a rotation of the old
labels). We now proceed inductively. Suppose we have the partition
$\mathscr{P}_n$, consisting of $2^n$ triangles. Each triangle of
$\mathscr{P}_n$ is partitioned into two subtriangles by the line
segment joining the vertex labelled ``1'' with the mediant of the
vertex ``0'' and the vertex ``2''. This gives us the next partition
$\mathscr{P}_{n+1}$. Figure~\ref{fig:partition_012} shows the
partitions $\mathscr{P}_0$, $\mathscr{P}_1$, and $\mathscr{P}_2$.

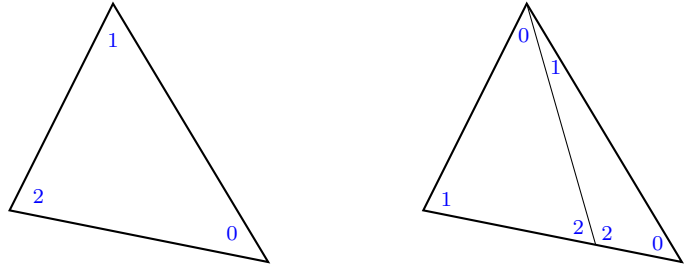
\begin{figure}[h]
    \begin{tikzpicture}[scale=2.75]
    \def\t{2}
    \def\a{0.175}
    \def\b{0.07}
    \def\c{0.1}
    \def\f{0.8}
    \draw[thick] (0,0) -- (1/2,1) -- (5/4,-1/4) -- cycle;
    \node[blue] at ($(5/4,-1/4)+(-2.5*\b,2*\b)$) {\footnotesize $0$};
    \node[blue] at ($(1/2,1)+(0,-\a)$) {\footnotesize $1$};
    \node[blue] at ($(0,0)+(4/5*\a,\b)$) {\footnotesize $2$};

    \draw[thick] ($(\t,0)+(0,0)$) -- ($(\t,0)+(1/2,1)$) -- ($(\t,0)+(5/4,-1/4)$) -- cycle;
    \draw[thin] ($(\t,0)+(1/2,1)$) -- ($(\t,0)+(5/6,-1/6)$);

    \node[blue] at ($(\t,0)+(5/4,-1/4)+(-5/3*\b,4/3*\b)$) {\footnotesize $0$};
    \node[blue] at ($(\t,0)+(1/2,1)+(0.8*\a,-1.75*\a)$) {\footnotesize $1$};
    \node[blue] at ($(\t,0)+(5/6,-1/6)+(8/25*\a,4/5*\b)$) {\footnotesize $2$};
    \node[blue] at ($(\t,0)+(1/2,1)+(-\f*1/4*\b,-\f*1.1*\a)$) {\footnotesize $0$};
    \node[blue] at ($(\t,0)+(0,0)+(\f*4/5*\a,\f*\b)$) {\footnotesize $1$};
    \node[blue] at ($(\t,0)+(5/6,-1/6)+(-\f*3/5*\a,\f*3/2*\b)$) {\footnotesize $2$};

\end{tikzpicture}
    \caption{Partition of a triangle of $\mathscr{P}_n$ into two
      subtriangles and relabelling of the
      vertices.}\label{fig:relabel}
\end{figure}

\begin{figure}[h]
    \begin{tikzpicture}[scale=3.7]
    \def\t{1.5}
    \def\a{0.175}
    \def\b{0.07}
    \def\c{0.1}
    \def\f{0.75}
    \draw[thick] (0,0) -- (1,0) -- (1,1) -- cycle;
    \node[blue] at ($(0,0)+(\a,\b)$) {\footnotesize $0$};
    \node[blue] at ($(1,0)+(-\b,\b)$) {\footnotesize $1$};
    \node[blue] at ($(1,1)+(-\b,-\a)$) {\footnotesize $2$};

    \draw[thick] (\t,0) -- (\t+1,0) -- (\t+1,1) -- cycle;
    \draw[thin] (\t+1/2,1/2) -- (\t+1,0);

    \node[blue] at ($(\t,0)+(\a,\b)$) {\footnotesize $0$};
    \node[blue] at ($(\t,0)+(1,0)+(-\a,\b)$) {\footnotesize $1$};
    \node[blue] at ($(\t,0)+(1/2,1/2)+(0,-\c)$) {\footnotesize $2$};
    \node[blue] at ($(\t,0)+(1,0)+(-\b,\a)$) {\footnotesize $0$};
    \node[blue] at ($(\t,0)+(1,1)+(-\b,-\a)$) {\footnotesize $1$};
    \node[blue] at ($(\t,0)+(1/2,1/2)+(\c,0)$) {\footnotesize $2$};

    \draw[thick] (2*\t,0) -- (2*\t+1,0) -- (2*\t+1,1) -- cycle;
    \draw[thin] (2*\t+1/2,1/2) -- (2*\t+1,0);
    \draw[thin] (2*\t+1,1) -- (2*\t+2/3,1/3);
    \draw[thin] (2*\t+1,0) -- (2*\t+1/3,1/3);

    \node[blue] at ($(2*\t,0)+(0,0)+(\f*\a,\f*\b)$) {\footnotesize $0$};
    \node[blue] at ($(2*\t,0)+(1,0)+(-\f*1.75*\a,\f*\b)$) {\footnotesize $1$};
    \node[blue] at ($(2*\t,0)+(1/3,1/3)+(0,-\f*\c)$) {\footnotesize $2$};
    \node[blue] at ($(2*\t,0)+(1,0)+(-2.75*\b,4/5*\a)$) {\footnotesize $0$};
    \node[blue] at ($(2*\t,0)+(1/2,1/2)+(0,-\f*\c)$) {\footnotesize $1$};
    \node[blue] at ($(2*\t,0)+(1/3,1/3)+(\f*\c,1/4*\b)$) {\footnotesize $2$};
    \node[blue] at ($(2*\t,0)+(1,1)+(-4/5*\a,-2.75*\b)$) {\footnotesize $0$};
    \node[blue] at ($(2*\t,0)+(1/2,1/2)+(\f*\c,0)$) {\footnotesize $1$};
    \node[blue] at ($(2*\t,0)+(2/3,1/3)+(-1/4*\b,\f*\c)$) {\footnotesize $2$};
    \node[blue] at ($(2*\t,0)+(1,0)+(-\f*\b,\f*\a)$) {\footnotesize $0$};
    \node[blue] at ($(2*\t,0)+(1,1)+(-\f*\b,-\f*1.75*\a)$) {\footnotesize $1$};
    \node[blue] at ($(2*\t,0)+(2/3,1/3)+(\f*\c,0)$) {\footnotesize $2$};
\end{tikzpicture}
    \caption{From left to right: partitions $\mathscr{P}_0$,
      $\mathscr{P}_1$, and $\mathscr{P}_2$, along with the labelling
      of the vertices.}\label{fig:partition_012}
\end{figure}
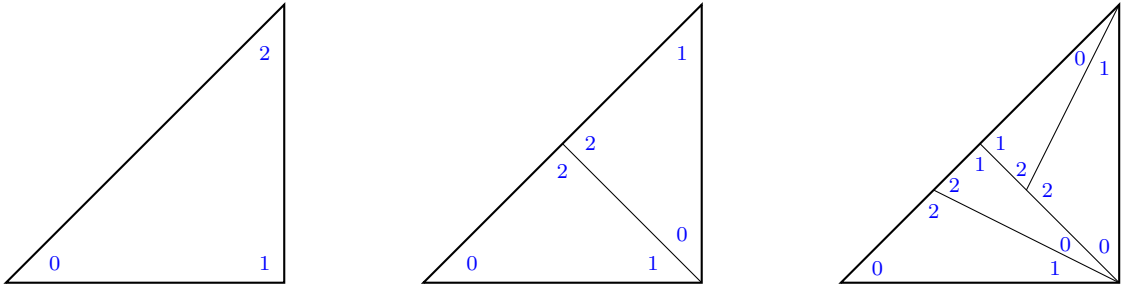

\noindent For the definition of the tree, in a Farey-like way, we
recursively define a sequence $(\mathcal{S}_n)_{n\geq -1}$ of nested
sets of pairs of rationals. First set
$\mathcal{S}_{-1} \coloneqq \left\{(0,0),\,(1,0),\,(1,1)\right\}$ and
consider the partition $\mathscr{P}_0$. The basic idea is to insert
the mediant of each pair of neighbouring points along each side of the
partition $\mathscr{P}_0$. For instance, at the first step we have
three sides (the sides of $\trianglecl$), and we add to
$\mathcal{S}_{-1}$ one point along each side, providing
\[
    \mathcal{S}_0 = \set{(0,0),\singy{1}{2}{0},(1,0),\singx{1}{1}{2},
      (1,1),\triple{1}{1}{2}}.
\]
Again we proceed inductively. Suppose we have the set $\mathcal{S}_n$
and consider the partition $\mathscr{P}_{n+1}$. The set
$\mathcal{S}_{n+1}$ is obtained from $\mathcal{S}_n$ inserting the
mediant of each pair of neighbouring points along each side of the
partition $\mathscr{P}_{n+1}$. In other words, for $n\geq -1$,
\[
    \mathcal{S}_{n+1} \coloneqq \bigcup_{\mathfrak{S}\in
      \mathscr{S}_{n+1}} (\mathfrak{S}\cap \mathcal{S}_n)^\oplus,
\]
where $\mathscr{S}_0$ is the set of the three sides of $\trianglecl$,
and $\mathscr{S}_n$ is obtained from $\mathscr{S}_{n-1}$ by adding the
line segments used to partition the triangles of $\mathscr{P}_{n-1}$
to obtain $\mathscr{P}_n$.

Now we will start to work towards showing that our two trees are in
fact identical. Figure \ref{fig:triangle_levels} at the end of the
section may ease the understanding of the argument. Let $n\geq 0$ and
let $\omega\in \{0,1\}^n$ be a word of length $|\omega|=n$ over the
two symbols ``$0$'' and ``$1$''. We define
\[
    \phi_\omega\coloneqq \phi_{\omega_1}\circ \phi_{\omega_2}\circ
    \cdots \circ \phi_{\omega_n}
    \quad\text{and}\quad
    \triangle_\omega\coloneqq \phi_\omega(\trianglecl).
\]

\begin{lemma}\label{lemma:partition-sides}
    Let $\ell$ be the open line segment joining $(1,0)$ and
    $\couple{1}{2}{1}{2}$, that is
    $\ell\coloneqq \set{(x,1-x) \,:\, \frac 12 <x<1}$. For $n\geq 0$
    the following holds.
    \begin{enumerate}[label={\upshape(\roman*)},wide = 0pt,leftmargin=*]
      \item $\mathscr{P}_n = \{\triangle_\omega\,:\, |\omega|=n\}$,
        and the labelling of the vertices of each $\triangle_\omega$
        is such that the vertex ``$k$'' of
        $\triangle_\omega=\phi_\omega(\trianglecl)$ is
        $\phi_\omega(v_k)$, for $k=0,1,2$.
      \item
        $\mathscr{S}_{n} = \mathscr{S}_0\cup
        \set{\widebar{\phi_\omega(\ell)} \,:\, |\omega|\leq n-1}$.
    \end{enumerate}
\end{lemma}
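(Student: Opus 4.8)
The plan is to prove (i) and (ii) together by induction on $n$, with (i) doing the real work and (ii) following immediately at each stage. First I record the elementary computations
$\phi_0(v_0)=(1,0)$, $\phi_0(v_1)=(1,1)$, $\phi_0(v_2)=\couple{1}{2}{1}{2}$ and $\phi_1(v_0)=(0,0)$, $\phi_1(v_1)=(1,0)$, $\phi_1(v_2)=\couple{1}{2}{1}{2}$, together with the observations that $v_0\oplus v_2=\couple{1}{2}{1}{2}$, that $\widebar\ell$ (the closed segment from $v_1=(1,0)$ to $v_0\oplus v_2$) is exactly the segment along which $\Gamma_0$ and $\Gamma_1$ meet, and that $v_2\in\Gamma_0$ while $v_0\in\Gamma_1$. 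The base case $n=0$ of the induction is then immediate: the empty word gives $\phi_\emptyset=\mathrm{id}$, so $\triangle_\emptyset=\trianglecl=\mathscr{P}_0$ with its vertices carrying the labels $v_k\mapsto k$; and $\mathscr{S}_0$ is by definition the set of the three sides of $\trianglecl$, while $\set{\widebar{\phi_\omega(\ell)}\,:\,|\omega|\le -1}=\emptyset$.

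For the inductive step, assume (i) and (ii) for some $n\ge 0$ and fix a word $\omega$ with $|\omega|=n$. Each $\phi_i$ is the restriction to $\trianglecl$ of a projective transformation (in homogeneous coordinates $[x:y:1]\mapsto[1:x:1+y]$ and $[x:y:1]\mapsto[x:y:1+y]$ respectively), whose homogenising coordinate $1+y$ is bounded away from $0$ on $\trianglecl$; hence $\phi_\omega$ is a homeomorphism of $\trianglecl$ onto $\triangle_\omega$ sending line segments to line segments, and by Lemma~\ref{lemma:preserve_Fsum} it preserves mediants. By (i) for $n$ the vertices ``$0$'', ``$1$'', ``$2$'' of $\triangle_\omega$ are $\phi_\omega(v_0),\phi_\omega(v_1),\phi_\omega(v_2)$, so the mediant of the vertices ``$0$'' and ``$2$'' equals $\phi_\omega(v_0)\oplus\phi_\omega(v_2)=\phi_\omega(v_0\oplus v_2)=\phi_\omega\couple{1}{2}{1}{2}$. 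Consequently the segment splitting $\triangle_\omega$ when passing from $\mathscr{P}_n$ to $\mathscr{P}_{n+1}$ is $\phi_\omega(\widebar\ell)=\widebar{\phi_\omega(\ell)}$, and it divides $\triangle_\omega=\phi_\omega(\trianglecl)=\phi_\omega(\Gamma_0)\cup\phi_\omega(\Gamma_1)=\triangle_{\omega 0}\cup\triangle_{\omega 1}$ into the two subtriangles $\triangle_{\omega 0}$ and $\triangle_{\omega 1}$, so that $\mathscr{P}_{n+1}=\set{\triangle_{\omega'}\,:\,|\omega'|=n+1}$. Since $v_2\in\Gamma_0$ and $v_0\in\Gamma_1$, the half $\triangle_{\omega 0}$ is the one containing the old vertex ``$2$'' and $\triangle_{\omega 1}$ the one containing the old vertex ``$0$''; applying $\phi_\omega$ to the identities $\phi_0(v_0)=v_1$, $\phi_0(v_1)=v_2$, $\phi_0(v_2)=v_0\oplus v_2$ and $\phi_1(v_0)=v_0$, $\phi_1(v_1)=v_1$, $\phi_1(v_2)=v_0\oplus v_2$ recorded above, one checks that the vertex ``$k$'' of $\triangle_{\omega i}$, namely $\phi_\omega(\phi_i(v_k))=\phi_{\omega i}(v_k)$, is precisely the vertex to which the relabelling rule of Figure~\ref{fig:relabel} attaches the label ``$k$''. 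This establishes (i) for $n+1$.

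Part (ii) for $n+1$ then follows: by definition $\mathscr{S}_{n+1}$ is $\mathscr{S}_n$ together with the line segments used to subdivide the triangles of $\mathscr{P}_n$, and the previous paragraph shows that for $\triangle_\omega\in\mathscr{P}_n$ (i.e.\ $|\omega|=n$) this segment is $\widebar{\phi_\omega(\ell)}$; combining with the inductive hypothesis $\mathscr{S}_n=\mathscr{S}_0\cup\set{\widebar{\phi_\omega(\ell)}\,:\,|\omega|\le n-1}$ gives $\mathscr{S}_{n+1}=\mathscr{S}_0\cup\set{\widebar{\phi_\omega(\ell)}\,:\,|\omega|\le n}$, which is the claim. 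The only part of the argument that is not a mechanical unravelling of the definitions is the label bookkeeping in the inductive step: one must verify that the purely geometric relabelling of Figure~\ref{fig:relabel} is intertwined by $\phi_\omega$, that is, performing ``split and relabel'' on $\triangle_\omega$ yields exactly the transport by $\phi_\omega$ of ``split and relabel'' performed on $\trianglecl$. That this holds is guaranteed by the base-case identities for $\phi_0(v_k)$ and $\phi_1(v_k)$, together with the facts that $v_0\in\Gamma_1$, $v_2\in\Gamma_0$, and that $\phi_\omega$ preserves mediants; everything else is routine.
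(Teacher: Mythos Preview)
Your proof is correct and follows essentially the same approach as the paper's: induction on $n$, using that $\phi_\omega$ preserves mediants (Lemma~\ref{lemma:preserve_Fsum}) to identify the splitting segment of $\triangle_\omega$ as $\widebar{\phi_\omega(\ell)}$, and checking the relabelling by computing $\phi_0(v_k)$ and $\phi_1(v_k)$. The only organisational difference is that you run the induction from $n=0$ directly, whereas the paper verifies $n=0$ and $n=1$ separately before inducting from $n\ge 1$; your version is slightly cleaner, and your explicit remarks on $\phi_\omega$ being projective (hence line-preserving) and on which subtriangle contains the old vertex ``$0$'' versus ``$2$'' make the label bookkeeping more transparent than the paper's ``immediately follows by computing the images''.
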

\begin{proof}
    (i) The statement is trivially true when $n=0$. We argue by
    induction on $n\geq 1$. From the definition of the maps $\phi_0$
    and $\phi_1$ it is straightforward to see that
    \[
        \mathscr{P}_1 = \set{\phi_0(\trianglecl),\phi_1(\trianglecl)}
    \]
    and that the relabelling of the vertices agrees with the geometric
    action of the two maps. This proves the case $n=1$. We also
    observe that the partition $\mathscr{P}_1$ is obtained through the
    line segment $\widebar{\ell}$. For the inductive step, suppose
    that, for a certain $n\geq 1$,
    $\mathscr{P}_n = \{\triangle_\omega \,:\, |\omega|=n\}$ and that
    the labelling of the vertices of each $\triangle_\omega$ is
    induced by $\phi_\omega$ as in (i). Consider a triangle
    $\triangle_\omega=\phi_{\omega}(\trianglecl)\in \mathscr{P}_n$,
    and note that
    \[
        \triangle_\omega=\phi_\omega(\trianglecl) = \phi_\omega(
        \phi_0(\trianglecl)\cup \phi_1(\trianglecl))=\phi_{\omega
          0}(\trianglecl)\cup \phi_{\omega 1}(\trianglecl).
    \]
    This partition is obtained through $\widebar{\phi_\omega(\ell)}$
    and we now prove that it agrees with the definition of
    $\mathscr{P}_{n+1}$. Indeed, from Lemma~\ref{lemma:preserve_Fsum},
    $\phi_\omega$ preserves the mediant, so that
    $\widebar{\phi_\omega(\ell)}$ joins $\phi_\omega(v_1)$ with
    $\phi_\omega(v_0\oplus
    v_2)=\phi_\omega(v_0)\oplus\phi_\omega(v_2)$. Thus
    $\phi_{\omega 0}(\trianglecl),\,\phi_{\omega 1}(\trianglecl)\in
    \mathscr{P}_{n+1}$. This proves
    $\{\triangle_\omega \,:\, |\omega|=n+1\}\subseteq
    \mathscr{P}_{n+1}$, and the two sets are in fact the same since
    they have the same cardinality. It remains to show that the
    labelling of the vertices of $\phi_{\omega 0}(\trianglecl)$ and
    $\phi_{\omega 1}(\trianglecl)$ according to the definition of
    $\mathscr{P}_{n+1}$ is induced by $\phi_{\omega 0}$ and
    $\phi_{\omega 1}$. This immediately follows by computing the
    images of the vertices of $\trianglecl$ under $\phi_{\omega 0}$
    and $\phi_{\omega 1}$.%
    \\[0.2cm] (ii) From (i) we have that
    $\set{\widebar{\phi_\omega(\ell)} \,:\, |\omega|= n}$ contains the
    line segments needed to pass from $\mathscr{P}_n$ to
    $\mathscr{P}_{n+1}$.
\end{proof}

In light of (ii) of the previous Lemma we have, for $n\geq -1$,
\[
    \mathcal{S}_{n+1} =%
    \bigcup_{\mathfrak{S}\in \mathscr{S}_{n+1}} (\mathfrak{S}\cap
    \mathcal{S}_n)^\oplus =%
    \bigcup_{\mathfrak{S}\in \mathscr{S}_{0}} (\mathfrak{S}\cap
    \mathcal{S}_n)^\oplus \cup \bigcup_{|\omega|\leq n}
    (\widebar{\phi_{\omega}(\ell)} \cap \mathcal{S}_n)^\oplus.
\]
By Lemma~\ref{lemma:char_Bn} and by the characterisation of the levels
of the Farey tree in terms of Stern-Brocot sets, it is easy to verify
that
\[
    \bigcup_{\mathfrak{S}\in \mathscr{S}_{0}} (\mathfrak{S}\cap
    \mathcal{S}_n)^\oplus = \mathcal{B}_{\leq n+1}.
\]
Hence
\begin{equation}\label{eq:S_n+1}
    \mathcal{S}_{n+1} = \mathcal{B}_{\leq n+1} \cup
    \bigcup_{|\omega|\leq n} (\widebar{\phi_{\omega}(\ell)} \cap
    \mathcal{S}_n)^\oplus,
\end{equation}
and this leads us towards studying the interior points of the
counterimages tree in order to prove that the two trees coincide level
by level.

\begin{proposition}\label{prop:char_In}
    For $n\geq 1$, the following properties hold.
    \begin{enumerate}[label={\upshape(\roman*)},wide = 0pt,leftmargin=*]
      \item
        $\ell \cap \mathcal{I}_n =
        \phi_1\left(\{x=1\}\cap\mathcal{B}_{n-1}\right)$ and
        $\#\left(\ell \cap\mathcal{I}_n \right) = 2^{n-1}$.
      \item Let $\mathfrak{p}$ and $\mathfrak{q}$ be the two endpoints
        of $\ell$, then
        \[
            \{\mathfrak{p}, \mathfrak{q}\}\cup \left(\ell\cap
              \mathcal{I}_{\leq n}\right) = \left(\{\mathfrak{p},
              \mathfrak{q}\}\cup \left(\ell\cap \mathcal{I}_{\leq
                  n-1}\right)\right)^\oplus,
        \]
        that is, the interior points up to level $n$ on $\ell$ are
        obtained from those up to level $n-1$ by inserting mediants of
        neighbouring points, where here the endpoints of $\ell$ are
        also included.
    \end{enumerate}
    Let $\omega\in \{0,1\}^*$ be a binary word of finite length
    $|\omega|\leq n-1$. Then the following properties hold.
    \begin{enumerate}[label={\upshape(\roman*)},wide = 0pt,leftmargin=*]
        \setcounter{enumi}{2}
      \item
        $\phi_\omega(\ell)\cap \mathcal{I}_n = \phi_{\omega}\left(\ell
          \cap \mathcal{I}_{n-|\omega|}\right)$ and
        $\#\left(\phi_\omega(\ell)\cap \mathcal{I}_n\right) =
        2^{n-|\omega|-1}$.
      \item Let
        $\mathfrak{p}_\omega\coloneqq \phi_{\omega}(\mathfrak{p})$ and
        $\mathfrak{q}_\omega\coloneqq \phi_{\omega}(\mathfrak{q})$ be
        the two endpoints of $\phi_\omega(\ell)$. Then
        \[
            \{\mathfrak{p}_\omega, \mathfrak{q}_\omega\}\cup
            \left(\phi_\omega(\ell) \cap \bigcup_{k=|\omega|+1}^{n}
              \mathcal{I}_k\right) = \left(\{\mathfrak{p}_\omega,
              \mathfrak{q}_\omega\}\cup \left(\phi_\omega(\ell) \cap
                \bigcup_{k=|\omega|+1}^{n-1} \mathcal{I}_k
              \right)\right)^\oplus.
        \]
    \end{enumerate}
\end{proposition}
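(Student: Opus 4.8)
The plan is to exploit that $\phi_1$ maps the side $\{x=1\}$ of $\triangle$ bijectively onto $\widebar{\ell}$, carrying the Stern--Brocot structure of the boundary points on $\{x=1\}$ over to the interior points of the counterimages tree that lie on $\ell$; parts (iii)--(iv) will then follow from (i)--(ii) by transporting everything through $\phi_\omega$. I would start from two observations. First, inspecting the rules \ref{rule1}--\ref{rule4}, the only ways to create a new point of $\mathcal{I}_n$ from a point of level $n-1$ are \ref{rule1} (applied to a point of $\mathcal{I}_{n-1}$, through $\phi_0$ and $\phi_1$) and the $\phi_1$-branch of \ref{rule4} (applied to a point of $\{x=1\}\cap\mathcal{B}_{n-1}$), so that for every $n\geq 1$
\[
    \mathcal{I}_n = \set{\phi_0(\mathfrak z),\,\phi_1(\mathfrak z)\,:\,\mathfrak z\in\mathcal{I}_{n-1}} \cup \phi_1(\{x=1\}\cap\mathcal{B}_{n-1}).
\]
Second, since $\phi_0$ and $\phi_1$ send the interior of $\triangle$ into itself, repeated use of \ref{rule1} gives the \emph{level-shift rule}: if $\mathfrak z\in\mathcal{I}_m$ and $|\omega|=k$, then $\phi_\omega(\mathfrak z)\in\mathcal{I}_{m+k}$.

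For part (i): if $\mathfrak z=\triple{p}{r}{q}\in\mathcal{I}_{n-1}$, then $0<r<p<q$, and a one-line computation shows that neither $\phi_0(\mathfrak z)=\triple{q}{p}{r+q}$ nor $\phi_1(\mathfrak z)=\triple{p}{r}{r+q}$ can lie on the line $\{x+y=1\}$ containing $\ell$, since that would force $p=r$, respectively $p=q$. On the other hand, for $\singx{1}{p}{q}\in\{x=1\}\cap\mathcal{B}_{n-1}$, so that $0<p<q$, the point $\phi_1\singx{1}{p}{q}=\triple{q}{p}{p+q}$ does lie on $\ell$. Intersecting the displayed description of $\mathcal{I}_n$ with $\ell$ therefore yields $\ell\cap\mathcal{I}_n=\phi_1(\{x=1\}\cap\mathcal{B}_{n-1})$, and the cardinality $2^{n-1}$ follows from the injectivity of $\phi_1$ together with $\#(\{x=1\}\cap\mathcal{B}_{n-1})=2^{n-1}$ (Lemma~\ref{lemma:count_Tn}). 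For part (ii) I would put $G_m\coloneqq\{\mathfrak p,\mathfrak q\}\cup(\ell\cap\mathcal{I}_{\leq m})$. Using $\mathcal{I}_{-1}=\mathcal{I}_0=\emptyset$ and $\{\mathfrak p,\mathfrak q\}=\phi_1(\{(1,0),(1,1)\})=\phi_1(\{x=1\}\cap\mathcal{B}_{-1})$, part (i) gives $G_m=\phi_1(\{x=1\}\cap\mathcal{B}_{\leq m-1})$. By Lemma~\ref{lemma:char_Bn} the set $\{x=1\}\cap\mathcal{B}_{\leq m-1}$ is identified with the Stern--Brocot set $\mathcal{F}_{m-1}$ via $\singx{1}{p}{q}\leftrightarrow\frac pq$, an identification under which the mediant of pairs and the lexicographic order restrict to the ordinary mediant and order of fractions; hence the recursion $\mathcal{F}_{m-1}=(\mathcal{F}_{m-2})^\oplus$ becomes $\{x=1\}\cap\mathcal{B}_{\leq m-1}=(\{x=1\}\cap\mathcal{B}_{\leq m-2})^\oplus$, and applying $\phi_1$ with \eqref{eq:f_Roplus} gives $G_m=(G_{m-1})^\oplus$, which is (ii).

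Parts (iii) and (iv) would then be formal consequences. For (iii), the inclusion $\phi_\omega(\ell\cap\mathcal{I}_{n-|\omega|})\subseteq\phi_\omega(\ell)\cap\mathcal{I}_n$ is immediate from the level-shift rule; conversely, if $\mathfrak w\in\phi_\omega(\ell)\cap\mathcal{I}_n$ then $\mathfrak z\coloneqq\phi_\omega^{-1}(\mathfrak w)\in\ell$ is a rational pair in the interior of $\triangle$, hence by completeness (Theorem~\ref{thm:tree-compl}) $\mathfrak z\in\mathcal{I}_m$ for some $m$, and the level-shift together with the fact that every pair occurs in the tree exactly once forces $m=n-|\omega|$; the cardinality is then $\#(\ell\cap\mathcal{I}_{n-|\omega|})=2^{n-|\omega|-1}$ by (i), using $|\omega|\le n-1$. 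For (iv), combining (iii) with $\mathcal{I}_{-1}=\mathcal{I}_0=\emptyset$ one rewrites the two sides of the asserted identity as $\phi_\omega(G_{n-|\omega|})$ and $(\phi_\omega(G_{n-|\omega|-1}))^\oplus$ respectively; then (ii) gives $G_{n-|\omega|}=(G_{n-|\omega|-1})^\oplus$, and a last application of \eqref{eq:f_Roplus}, iterated along the mediant-preserving, monotone composition $\phi_\omega=\phi_{\omega_1}\circ\cdots\circ\phi_{\omega_{|\omega|}}$ (so that $\phi_\omega(\mathfrak R^\oplus)=\phi_\omega(\mathfrak R)^\oplus$), closes the argument.

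I expect the only step that is not pure bookkeeping to be the one in part (i): ruling out that an interior point of level $n-1$ happens to produce a point of $\ell$ under $\phi_0$ or $\phi_1$. This amounts exactly to the impossibility of the equalities $p=r$ or $p=q$ for a pair $\triple{p}{r}{q}$ strictly inside $\triangle$, which is where strict interiority ($0<r<p<q$) is used. Everything else --- the index shifts, the identification of the boundary levels $\mathcal{B}_n$ with Stern--Brocot sets, and the naturality \eqref{eq:f_Roplus} of the mediant operation under $\phi_0$ and $\phi_1$ --- is routine.
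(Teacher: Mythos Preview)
Your proposal is correct and follows essentially the same route as the paper: identify $\ell\cap\mathcal{I}_n$ with $\phi_1$ of the boundary points on $\{x=1\}$, import the Stern--Brocot recursion via \eqref{eq:f_Roplus}, and then push everything through $\phi_\omega$. Your treatment of (i) is in fact more careful than the paper's, which only states the bijection $\phi_1\colon\{x=1,\,0<y<1\}\to\ell$ and invokes \ref{rule4}, whereas you explicitly rule out that $\phi_0$ or $\phi_1$ of an interior point can land on $\ell$; this is exactly the right way to justify the reverse inclusion.

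The only point where you deviate is in the reverse inclusion of (iii): you appeal to completeness (Theorem~\ref{thm:tree-compl}) and uniqueness to pin down the level of $\phi_\omega^{-1}(\mathfrak w)$, while the paper argues directly from bijectivity of $\phi_\omega$ on $\ell$ and the level-shift rule. Your argument is perfectly valid since Theorem~\ref{thm:tree-compl} is proved earlier and does not rely on this proposition. One could also avoid completeness by a short induction on $|\omega|$, using that $\phi_0(\trianglecl\setminus\{x=y\})=\Gamma_0$ and $\phi_1(\trianglecl)=\Gamma_1$ are disjoint, so the first letter of $\omega$ is determined by the position of $\mathfrak w$; but this is a matter of taste, not of substance.
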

\begin{proof}
    (i) The function $\phi_1$ bijectively maps the open vertical side
    $\{(1,y)\,:\, 0<y<1\}$ of $\trianglecl$ onto $\ell$. Moreover,
    since $n\geq 1$, by \ref{rule4} $\phi_1$ sends points of
    $\mathcal{B}_{n-1}$ to points in $\mathcal{I}_n$. The cardinality
    computation immediately follows from the first part and
    Lemma~\ref{lemma:char_Bn}.\\[0.2cm]
    (ii) From Lemma~\ref{lemma:char_Bn} we know that, for $n\geq 1$,
    the set $\{x=1\}\cap \mathcal{B}_{\leq n-1}$ corresponds to the
    Stern-Brocot set of level $n-1$. More precisely,
    $\{x=1\}\cap \mathcal{B}_{\leq n-1} = \set{(1,y) \,:\, y\in
      \mathcal{F}_{n-1}}$. Thus on the vertical side $\{x=1\}$ of
    $\trianglecl$, the points up to level $n-1$ are obtained from
    those up to level $n-2$ by taking mediants between neighbouring
    points. In other words
    \[
        \{x=1\}\cap \mathcal{B}_{\leq n-1} = \left(\{x=1\}\cap
          \mathcal{B}_{\leq n-2}\right)^\oplus.
    \]
    We now apply $\phi_1$ to both sides of the previous equality,
    getting
    \[
        \phi_1\left(\{x=1\}\cap \mathcal{B}_{\leq n-1}\right) =%
        \bigcup_{k=-1}^{n-1}\phi_1\left(\{x=1\}\cap
          \mathcal{B}_k\right) \overset{\text{(i)}}{=}%
        \{\mathfrak{p}, \mathfrak{q}\} \cup \left(\ell\cap
          \mathcal{I}_{\leq n}\right)
    \]
    and, using~\eqref{eq:f_Roplus},
    \[
        \phi_1\left(\left(\{x=1\}\cap \mathcal{B}_{\leq
              n-2}\right)^\oplus\right) =%
        \left(\phi_1\left(\{x=1\}\cap \mathcal{B}_{\leq
              n-2}\right)\right)^\oplus = \left(\{\mathfrak{p},
          \mathfrak{q}\}\cup \left(\ell\cap \mathcal{I}_{\leq
              n-1}\right)\right)^\oplus.
    \]
    \\[0.2cm] (iii) In case $|\omega|=0$ the first part is trivial and
    the second one has been proved in (i). Thus we can consider
    $1\leq |\omega|\leq n-1$. The function $\phi_\omega$ bijectively
    maps the open segment $\ell$ onto $\phi_\omega(\ell)$. Moreover,
    by applying \ref{rule4} $|\omega|$ times, $\phi_\omega$ maps
    points of $\mathcal{I}_{n-|\omega|}$ to points in
    $\mathcal{I}_n$. For the second part, (i) implies that
    $\#\left(\phi_\omega(\ell)\cap
      \mathcal{I}_n\right)=\#\left(\ell\cap
      \mathcal{I}_{n-|\omega|}\right)= 2^{n-|\omega|-1}$. %
    \\[0.2cm]
    (iv) From (ii) we know that for all $n\geq 1$ the interior points
    up to level $n$ on $\ell$ are obtained from those up to level
    $n-1$ by inserting mediants of neighbouring points, also
    considering the endpoints of $\ell$. Since $\phi_\omega$ preserves
    mediants, we can conclude applying $\phi_\omega$ to both sides of
    the equality in (ii).
\end{proof}

The above proposition characterises the location in $\trianglecl$ of
the interior points of our tree. In particular, it holds that
\[
    \mathcal{I}_n = \bigcup_{|\omega|\leq n-1}
    \left(\phi_\omega(\ell)\cap\mathcal{I}_n\right),
\]
that is, the interior points of level $n$ are located along the
backward images of $\ell$ under compositions of $\phi_0$ and $\phi_1$
of length $\leq n-1$. To prove this, note that the inclusion
``$\supseteq$'' is trivial and that the two sets have the same
cardinality. Indeed, using Proposition~\ref{prop:char_In}-(iii),
\[
    \#\left(\bigcup_{|\omega|\leq n-1}
      \left(\phi_\omega(\ell)\cap\mathcal{I}_n\right)\right) =%
    \sum_{s=0}^{n-1}\sum_{|\omega|=s}
    \#\left(\phi_\omega(\ell)\cap\mathcal{I}_n\right) =%
    \sum_{s=0}^{n-1}\sum_{|\omega|=s} 2^{n-s-1}=n2^{n-1}.
\]
As last step, we now write the set of the interior points up to level
$n+1$ in a convenient way. For $n\geq 1$,
\begin{align}\label{eq:I_n+1}
    \begin{split}
        \mathcal{I}_{\leq n+1} &= %
        \bigcup_{|\omega|\leq n} \left(\phi_\omega(\ell)\cap
          \mathcal{I}_{\leq n+1}\right) =%
        \bigcup_{|\omega|\leq n} \left(\{\mathfrak{p}_\omega,
          \mathfrak{q}_\omega\} \cup \left(\phi_\omega(\ell)\cap
            \mathcal{I}_{\leq n}\right)\right)^\oplus=\\
        &= \bigcup_{|\omega|\leq n}
        \left(\widebar{\phi_\omega(\ell)}\cap
          \left(\{\mathfrak{p}_\omega, \mathfrak{q}_\omega\} \cup
            \mathcal{I}_{\leq n}\right)\right)^\oplus.
    \end{split}
\end{align}

\begin{theorem} \label{thm:tree-mediant}
    For all $n\geq 0$ we have
    $\mathcal{T}_n=\mathcal{S}_n\setminus \mathcal{S}_{n-1}$, that is
    the tree defined by counterimages and that defined by Farey sums
    coincide level by level.
\end{theorem}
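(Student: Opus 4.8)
The plan is to derive the theorem from the single identity
\[
    \mathcal{S}_n = \mathcal{B}_{\leq n}\cup\mathcal{I}_{\leq n}\qquad (n\geq -1),
\]
with the conventions $\mathcal{B}_{-1}=\mathcal{T}_{-1}$ and $\mathcal{I}_{-1}=\emptyset$. Once this is known, the theorem follows at once: by Theorem~\ref{thm:tree-compl} every pair of rationals lies on exactly one level, so the families $\{\mathcal{B}_k\}$ and $\{\mathcal{I}_k\}$ are pairwise disjoint and a boundary point is never an interior point; hence $\mathcal{S}_n\setminus\mathcal{S}_{n-1}=(\mathcal{B}_{\leq n}\cup\mathcal{I}_{\leq n})\setminus(\mathcal{B}_{\leq n-1}\cup\mathcal{I}_{\leq n-1})=\mathcal{B}_n\cup\mathcal{I}_n=\mathcal{T}_n$. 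I would prove the displayed identity by induction on $n$. The cases $n=-1,0$ are immediate from the definitions (note $\mathcal{T}_0=\mathcal{B}_0$, since its three points are the midpoints of the sides of $\trianglecl$), and the case $n=1$ follows directly from \eqref{eq:S_n+1} with $n=0$: the only side not in $\mathscr{S}_0$ is $\widebar{\ell}$, whose only $\mathcal{S}_0$-points are its endpoints $(1,0)$ and $\triple{1}{1}{2}$, and inserting their mediant produces precisely the unique point of $\mathcal{I}_1$.

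For the inductive step, fix $n\geq 1$ and assume $\mathcal{S}_k=\mathcal{B}_{\leq k}\cup\mathcal{I}_{\leq k}$ for all $k\leq n$. Comparing \eqref{eq:S_n+1},
\[
    \mathcal{S}_{n+1}=\mathcal{B}_{\leq n+1}\cup\bigcup_{|\omega|\leq n}\bigl(\widebar{\phi_\omega(\ell)}\cap\mathcal{S}_n\bigr)^\oplus,
\]
with \eqref{eq:I_n+1},
\[
    \mathcal{I}_{\leq n+1}=\bigcup_{|\omega|\leq n}\Bigl(\widebar{\phi_\omega(\ell)}\cap\bigl(\{\mathfrak{p}_\omega,\mathfrak{q}_\omega\}\cup\mathcal{I}_{\leq n}\bigr)\Bigr)^\oplus,
\]
I see that it is enough to prove, for every word $\omega$ with $|\omega|\leq n$, the set equality
\[
    \widebar{\phi_\omega(\ell)}\cap\mathcal{S}_n \;=\; \widebar{\phi_\omega(\ell)}\cap\bigl(\{\mathfrak{p}_\omega,\mathfrak{q}_\omega\}\cup\mathcal{I}_{\leq n}\bigr).
\]
Indeed, the Farey sum $\mathfrak{R}^\oplus$ of a finite set of collinear rational points depends only on $\mathfrak{R}$ as a set (the representatives being fixed by the least-common-denominator convention), so this equality makes the two unions above coincide, and then \eqref{eq:S_n+1} yields $\mathcal{S}_{n+1}=\mathcal{B}_{\leq n+1}\cup\mathcal{I}_{\leq n+1}$, closing the induction.

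The core of the argument is therefore this last set equality, and to establish it I would split $\widebar{\phi_\omega(\ell)}=\phi_\omega(\ell)\cup\{\mathfrak{p}_\omega,\mathfrak{q}_\omega\}$ and handle the open segment and the endpoints separately. Since $\ell$ lies in the interior of $\trianglecl$ and each $\phi_i$ maps the interior of $\trianglecl$ into the interior of $\Gamma_i$, which is contained in the interior of $\trianglecl$, the open segment $\phi_\omega(\ell)$ lies in the interior of $\trianglecl$; hence it contains no boundary point of the tree, and by the inductive hypothesis $\phi_\omega(\ell)\cap\mathcal{S}_n=\phi_\omega(\ell)\cap\mathcal{I}_{\leq n}$. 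For the endpoints, $\mathfrak{p}_\omega=\phi_\omega(v_1)$ and, using Lemma~\ref{lemma:preserve_Fsum}, $\mathfrak{q}_\omega=\phi_\omega(v_0\oplus v_2)=\phi_\omega(v_0)\oplus\phi_\omega(v_2)$; since $v_1\in\mathcal{T}_{-1}$ and $v_0\oplus v_2=\triple{1}{1}{2}\in\mathcal{T}_0$ are tree points and, by the rules \ref{rule1}--\ref{rule4}, each application of $\phi_0$ or $\phi_1$ sends a tree point to a tree point of level at most one higher, both $\mathfrak{p}_\omega$ and $\mathfrak{q}_\omega$ are tree points of level at most $|\omega|\leq n$, so they lie in $\mathcal{B}_{\leq n}\cup\mathcal{I}_{\leq n}=\mathcal{S}_n$. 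Consequently both sides of the equality equal $\{\mathfrak{p}_\omega,\mathfrak{q}_\omega\}\cup(\phi_\omega(\ell)\cap\mathcal{I}_{\leq n})$. I expect the delicate point to be exactly this endpoint bookkeeping: one must verify, including the degenerate situations where a $\phi_i$ fixes a vertex of $\trianglecl$ or carries it onto the diagonal $\{x=y\}$, that $\phi_\omega(v_1)$ and $\phi_\omega(v_0)\oplus\phi_\omega(v_2)$ really are tree points that have already appeared by level $n$, so that no point is lost or gained along $\widebar{\phi_\omega(\ell)}$ when passing between $\mathcal{S}_n$ and $\{\mathfrak{p}_\omega,\mathfrak{q}_\omega\}\cup\mathcal{I}_{\leq n}$.
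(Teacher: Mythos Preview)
Your proposal is correct and follows essentially the same route as the paper: reduce the statement to the identity $\mathcal{S}_n=\mathcal{B}_{\leq n}\cup\mathcal{I}_{\leq n}$, prove it by induction, and in the inductive step combine \eqref{eq:S_n+1} with \eqref{eq:I_n+1} via the key equality $\widebar{\phi_\omega(\ell)}\cap\mathcal{S}_n=\widebar{\phi_\omega(\ell)}\cap(\{\mathfrak{p}_\omega,\mathfrak{q}_\omega\}\cup\mathcal{I}_{\leq n})$. Your treatment of the endpoints is in fact more explicit than the paper's: the paper simply asserts that on $\widebar{\phi_\omega(\ell)}$ the only possible points of $\mathcal{B}_{\leq n}$ are $\mathfrak{p}_\omega,\mathfrak{q}_\omega$, whereas you also verify (correctly, tracking levels through the rules) that $\mathfrak{p}_\omega=\phi_\omega(v_1)$ and $\mathfrak{q}_\omega=\phi_\omega(v_0\oplus v_2)$ are themselves tree points of level $\le n$, which is needed for the reverse inclusion.
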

\begin{proof}
    It suffices to show that, for all $n\geq 0$,
    $\mathcal{S}_{n}=\mathcal{T}_{\leq n}=\mathcal{B}_{\leq n}\cup
    \mathcal{I}_{\leq n}$. We argue by induction on $n\geq 0$. Since
    $\mathcal{S}_0=\mathcal{T}_{-1}\cup \mathcal{T}_0$, the base case
    is proved. Now suppose that, for a certain $n\geq 0$,
    $\mathcal{S}_n=\mathcal{B}_{\leq n}\cup \mathcal{I}_{\leq n}$. By
    applying the inductive hypothesis we have
    \begin{align*}
        \mathcal{S}_{n+1} &\overset{\eqref{eq:S_n+1}}{=}%
        \mathcal{B}_{\leq n+1} \cup \bigcup_{|\omega|\leq n}
        (\widebar{\phi_{\omega}(\ell)} \cap \mathcal{S}_n)^\oplus =%
        \mathcal{B}_{\leq n+1} \cup \bigcup_{|\omega|\leq n}
        \left(\widebar{\phi_{\omega}(\ell)} \cap
          \left(\mathcal{B}_{\leq n}
            \cup \mathcal{I}_{\leq n} \right)\right)^\oplus =\\
        &= \mathcal{B}_{\leq n+1} \cup \bigcup_{|\omega|\leq n}
        \left(\widebar{\phi_{\omega}(\ell)} \cap
          \left(\{\mathfrak{p}_\omega,\mathfrak{q}_\omega\}\cup
            \mathcal{I}_{\leq n} \right)\right)^\oplus,
    \end{align*}
    where the last equality holds since, along
    $\widebar{\phi_{\omega}(\ell)}$, the points are all in
    $\mathcal{I}_{\leq n}$, with the possible exception of the
    endpoints of $\phi_{\omega}(\ell)$, namely $\mathfrak{p}_\omega$
    and $\mathfrak{q}_\omega$. Equation~\eqref{eq:I_n+1} allows us to
    conclude that
    $\mathcal{S}_{n+1} = \mathcal{B}_{\leq n+1}\cup \mathcal{I}_{\leq
      n+1}$, and the inductive step is proved.
\end{proof}

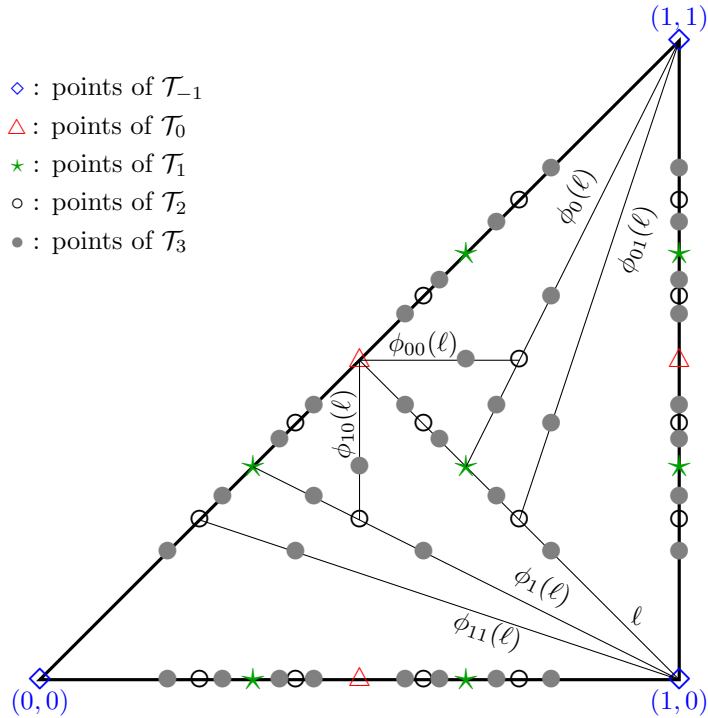
\begin{figure}[h]
    \begin{tikzpicture}[scale=8.5]
    \def\s{2}
    \def\o{0.075}
    \def\l{0.06}
    \draw[very thick] (0,0) -- (1,0) -- (1,1) -- cycle;

    \node[right] at (-\l,1-\o) {\textcolor{blue}{$\diamond$} : points of $\mathcal{T}_{-1}$};
    \node[right] at (-\l,1-\l-\o) {\textcolor{red}{\hspace{-0.05cm}\footnotesize$\triangle$\hspace{-0.05cm}} : points of $\mathcal{T}_{0}$};
    \node[right] at (-\l,1-2*\l-\o) {\textcolor{dgreen}{$\star$} : points of $\mathcal{T}_{1}$};
    \node[right] at (-\l,1-3*\l-\o) {$\circ$ : points of $\mathcal{T}_{2}$};
    \node[right] at (-\l,1-4*\l-\o) {\textcolor{gray}{$\bullet$} : points of $\mathcal{T}_3$};

    \node[blue] at (0,0) {\huge $\diamond$};
    \node[blue] at (1,0) {\huge $\diamond$};
    \node[blue] at (1,1) {\huge $\diamond$};
    \node[blue,below] at (0,0) {$(0,0)$};
    \node[blue,below] at (1,0) {$(1,0)$};
    \node[blue,above] at (1,1) {$(1,1)$};

    \node[red] at (1/2,0)   {$\triangle$};
    \node[red] at (1,1/2)   {$\triangle$};
    \node[red] at (1/2,1/2) {$\triangle$};

    \draw (1,0) -- (1/2,1/2);
    \node[dgreen] at (1/3,0)   {\huge $\star$};
    \node[dgreen] at (2/3,0)   {\huge $\star$};
    \node[dgreen] at (1,1/3)   {\huge $\star$};
    \node[dgreen] at (1,2/3)   {\huge $\star$};
    \node[dgreen] at (1/3,1/3) {\huge $\star$};
    \node[dgreen] at (2/3,2/3) {\huge $\star$};
    \node[dgreen] at (2/3,1/3) {\huge $\star$};

    \draw (1,1) -- (2/3,1/3);
    \draw (1,0) -- (1/3,1/3);
    \node at (1/4,0) {\huge $\circ$};
    \node at (2/5,0) {\huge $\circ$};
    \node at (3/5,0) {\huge $\circ$};
    \node at (3/4,0) {\huge $\circ$};
    \node at (1,1/4) {\huge $\circ$};
    \node at (1,2/5) {\huge $\circ$};
    \node at (1,3/5) {\huge $\circ$};
    \node at (1,3/4) {\huge $\circ$};
    \node at (1/4,1/4) {\huge $\circ$};
    \node at (2/5,2/5) {\huge $\circ$};
    \node at (3/5,3/5) {\huge $\circ$};
    \node at (3/4,3/4) {\huge $\circ$};
    \node at (3/5,2/5) {\huge $\circ$};
    \node at (3/4,1/4) {\huge $\circ$};
    \node at (3/4,2/4) {\huge $\circ$};
    \node at (2/4,1/4) {\huge $\circ$};

    \draw (1,0) -- (1/4,1/4);
    \draw (1/2,1/2) -- (1/2,1/4);
    \draw (1/2,1/2) -- (3/4,1/2);
    \draw (1,1) -- (3/4,1/4);
    \node[gray] at (1/5,0) {\huge $\bullet$};
    \node[gray] at (2/7,0) {\huge $\bullet$};
    \node[gray] at (3/8,0) {\huge $\bullet$};
    \node[gray] at (3/7,0) {\huge $\bullet$};
    \node[gray] at (4/7,0) {\huge $\bullet$};
    \node[gray] at (5/8,0) {\huge $\bullet$};
    \node[gray] at (5/7,0) {\huge $\bullet$};
    \node[gray] at (4/5,0) {\huge $\bullet$};
    \node[gray] at (1,1/5) {\huge $\bullet$};
    \node[gray] at (1,2/7) {\huge $\bullet$};
    \node[gray] at (1,3/8) {\huge $\bullet$};
    \node[gray] at (1,3/7) {\huge $\bullet$};
    \node[gray][gray] at (1,4/7) {\huge $\bullet$};
    \node[gray] at (1,5/8) {\huge $\bullet$};
    \node[gray] at (1,5/7) {\huge $\bullet$};
    \node[gray] at (1,4/5) {\huge $\bullet$};
    \node[gray] at (1/5,1/5) {\huge $\bullet$};
    \node[gray] at (2/7,2/7) {\huge $\bullet$};
    \node[gray] at (3/8,3/8) {\huge $\bullet$};
    \node[gray] at (3/7,3/7) {\huge $\bullet$};
    \node[gray] at (4/7,4/7) {\huge $\bullet$};
    \node[gray] at (5/8,5/8) {\huge $\bullet$};
    \node[gray] at (5/7,5/7) {\huge $\bullet$};
    \node[gray] at (4/5,4/5) {\huge $\bullet$};
    \node[gray] at (4/7,3/7) {\huge $\bullet$};
    \node[gray] at (5/8,3/8) {\huge $\bullet$};
    \node[gray] at (5/7,2/7) {\huge $\bullet$};
    \node[gray] at (4/5,1/5) {\huge $\bullet$};
    \node[gray] at (3/7,2/7) {\huge $\bullet$};
    \node[gray] at (3/5,1/5) {\huge $\bullet$};
    \node[gray] at (5/7,3/7) {\huge $\bullet$};
    \node[gray] at (4/5,3/5) {\huge $\bullet$};
    \node[gray] at (2/3,1/2) {\huge $\bullet$};
    \node[gray] at (1/2,1/3) {\huge $\bullet$};
    \node[gray] at (4/5,2/5) {\huge $\bullet$};
    \node[gray] at (2/5,1/5) {\huge $\bullet$};

    \node[right] at (0.91,0.1) {$\ell$};
    \node[above,rotate=atan(2)] at (0.87,2*0.87-1) {$\phi_0(\ell)$};
    \node[above,rotate=-atan(1/2)] at (0.77,-1/2*0.77+1/2) {$\phi_1(\ell)$};
    \node at (1/2+0.1,1/2+0.025) {$\phi_{00}(\ell)$};
    \node[rotate=90] at (1/2-0.025,1/2-0.1) {$\phi_{10}(\ell)$};
    \node[rotate=atan(-1/3)] at (0.7,-1/4*0.7+1/4) {$\phi_{11}(\ell)$};
    \node[below,rotate=atan(3)] at (0.9,3*0.9-2) {$\phi_{01}(\ell)$};
\end{tikzpicture}
    \caption{The first four levels of the tree generated through the
      local inverses of the map $\tilde{S}$ represented as set of
      points of $\trianglecl$.}\label{fig:triangle_levels}
\end{figure}

\begin{remark}
    At this point it is natural to ask, given the map
    $S$ and the tree defined here, whether or not a version of the
    Minkowski question mark function could be defined in this
    setting. We recall, briefly, that the original Minkowski question
    mark function was introduced as another way of demonstrating the
    Lagrange property of continued fractions, in that it maps every
    rational number to the subset of dyadic rationals (that is, those
    having denominators containing only powers of 2) and every
    quadratic irrational to the remaining rational numbers (see
    \cite{min, mink?}, and for other 1-dimensional analogues,
    \cite{SMJJM, SM1}). These functions are now known as {\em slippery
      Devil's staircases} for the fact that they are strictly
    increasing but nevertheless singular with respect to the Lebesgue
    measure.

    This natural question has been studied in \cite{mink} for the map $S$ and many possible generalisations. A higher dimensional version of the Minkowski function for a different map has been introduced in  \cite{Panti}.
    \end{remark}


\appendix

\section{Some results on the local inverses of $V$}
\label{sec:matrices}

In this appendix we prove some properties of the local inverses of the
map $V$, needed for the argument of Section~\ref{subsec:proof_pde}. We
recall that $V$ is the induced map of $S$ on the set
$A=\{(x,y)\in \Gamma_0 \,:\, S(x,y)\in \Gamma_0\}$, and that each local
inverse of $V$ is a linear fractional map, as in~\eqref{form-psi}. In
general, a linear fractional map $\psi$ of the form
\[
    \psi(x,y) =  \left( \frac{r_1 +s_1 x + t_1 y}{r+sx+ty},\ \frac{r_2
        +s_2 x + t_2 y}{r+sx+ty}\right),
\]
where the cofficients are non-negative integers, can be expressed in
projective coordinates by the $3\times 3$ matrix
\[
    M_{\psi} \coloneqq
    \begin{pmatrix}
       r & s & t\\
       r_1 & s_1 & t_1 \\
       r_2 & s_2 & t_2
    \end{pmatrix}
\]
by associating a point $(\frac{x}{z},\frac{y}{z})\in \R^2$ to a vector $v=( z,\, x,\, y)^t$, so that $\psi\left( \frac{x}{z},\frac{y}{z} \right)$ is associated to the vector $M_\psi v$. For instance, the two inverse maps $\phi_0$ and $\phi_1$ have matrices
\[
    M_0 \coloneqq M_{\phi_0} =
    \begin{pmatrix}
    1 & 0 & 1 \\
    1 & 0 & 0 \\
    0 & 1 & 0
    \end{pmatrix}
    \qquad\text{and}\qquad M_1 \coloneqq M_{\phi_1} =
    \begin{pmatrix}
    1 & 0 & 1 \\
    0 & 1 & 0 \\
    0 & 0 & 1
    \end{pmatrix}.
\]
Note that the composition of linear fractional maps translates into
the left multiplication of their matrices. As a consequence, since
both $M_0$ and $M_1$ have unit determinant, every product involving
these two matrices also has unit determinant. To every linear
fractional map as above, we associate the vectors
$v_1(\psi),\,v_2(\psi),\,v(\psi) \in \R^3$ corresponding to the rows
of the associated matrix $M_\psi$. In other words,
\[
    v_1(\psi) = \begin{pmatrix}
    r_1 \\
    s_1 \\
    t_1
\end{pmatrix},
    \qquad
    v_2(\psi) =
    \begin{pmatrix}
    r_2 \\
    s_2 \\
    t_2
\end{pmatrix},
\qquad
    v(\psi) =
    \begin{pmatrix}
    r \\
    s \\
    t
\end{pmatrix}.
\]

In what follows, we use the notation $\norm{\cdot}$ for the Euclidean
norm and $\norm{\cdot}_1$ for the $1$-norm on $\R^3$. The two norms
are equivalent and, in particular, for all $v\in \R^3$ holds
\begin{equation}
    \label{norme-equiv}
    \| v\| \le \|v\|_1 \le \sqrt{3} \|v\|.
\end{equation}
Moreover, to each $v\in \R^3\setminus\{0\}$ with non-negative
components we associate the normalised vector $P_v$ given by
$P_v \coloneqq \frac{v}{\| v\|_1}$.
\begin{lemma}\label{lem-geo}
    For any $v,w\in \R^3\setminus\{0\}$ with non-negative components,
    $ \| v \times w\| \le \sqrt{3} \|v\|\|w\|\|P_v - P_w\|.  $
\end{lemma}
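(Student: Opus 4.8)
The plan is to pass to the normalised vectors $P_v$ and $P_w$ and then exploit the bilinearity and antisymmetry of the cross product. First I would note that $v=\norm{v}_1 P_v$ and $w=\norm{w}_1 P_w$, so that $v\times w=\norm{v}_1\norm{w}_1\,(P_v\times P_w)$ and it suffices to bound $\norm{P_v\times P_w}$. Since $P_v\times P_v=0$, bilinearity gives $P_v\times P_w=P_v\times(P_w-P_v)$, whence $\norm{P_v\times P_w}\le\norm{P_v}\,\norm{P_v-P_w}$; writing instead $P_v\times P_w=(P_v-P_w)\times P_w$ gives symmetrically $\norm{P_v\times P_w}\le\norm{P_w}\,\norm{P_v-P_w}$. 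Multiplying these two inequalities and taking square roots yields the geometric-mean bound
\[
  \norm{P_v\times P_w}\le\sqrt{\norm{P_v}\,\norm{P_w}}\;\norm{P_v-P_w}.
\]

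Next I would translate this back in terms of $v$ and $w$. Since $\norm{P_v}=\norm{v}/\norm{v}_1$, one has $\norm{v}_1\sqrt{\norm{P_v}}=\sqrt{\norm{v}_1\,\norm{v}}$, and the right-hand inequality in \eqref{norme-equiv}, namely $\norm{v}_1\le\sqrt3\,\norm{v}$, gives $\sqrt{\norm{v}_1\,\norm{v}}\le 3^{1/4}\norm{v}$ (and likewise for $w$). Combining everything,
\[
  \norm{v\times w}=\norm{v}_1\norm{w}_1\,\norm{P_v\times P_w}\le\sqrt{\norm{v}_1\norm{v}}\,\sqrt{\norm{w}_1\norm{w}}\,\norm{P_v-P_w}\le\sqrt3\,\norm{v}\,\norm{w}\,\norm{P_v-P_w},
\]
which is the desired estimate.

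There is essentially no serious obstacle in this argument; the only point requiring a little care is to distribute the factor $\sqrt3$ evenly between $v$ and $w$. Using the geometric-mean bound $\norm{P_v\times P_w}\le\sqrt{\norm{P_v}\norm{P_w}}\,\norm{P_v-P_w}$ rather than the cruder $\norm{P_v\times P_w}\le\norm{P_v}\,\norm{P_v-P_w}$ is precisely what keeps the constant equal to $\sqrt3$ instead of $3$. The non-negativity of the components of $v$ and $w$ enters only in guaranteeing that the normalised vectors $P_v,P_w$ are well defined, as in the setup preceding the lemma.
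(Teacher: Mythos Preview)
Your proof is correct and takes a genuinely different route from the paper's. The paper argues trigonometrically: it writes $\|v\times w\|=\|v\|\|w\|\sin\theta_{v,w}$, identifies $\sin\theta_{v,w}=\lambda_{w,v}/\|P_w\|$ where $\lambda_{w,v}$ is the component of $P_w$ orthogonal to $P_v$, and then uses $\|w\|_1\le\sqrt3\,\|w\|$ on the single factor $1/\|P_w\|=\|w\|_1/\|w\|$ together with the geometric observation $\lambda_{w,v}\le\|P_v-P_w\|$. Your argument is purely algebraic: you rescale to $P_v,P_w$, use bilinearity and antisymmetry to get both $\|P_v\times P_w\|\le\|P_v\|\,\|P_v-P_w\|$ and $\|P_v\times P_w\|\le\|P_w\|\,\|P_v-P_w\|$, and then take the geometric mean to split the $\sqrt3$ evenly between $v$ and $w$. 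The paper's approach makes the geometry visible (hence the accompanying figure) and shows where the non-negativity hypothesis is actually used, namely to ensure $\theta_{v,w}\in[0,\pi/2]$; your approach is slicker and dimension-agnostic in spirit, and the geometric-mean step is a nice way to recover the sharp constant $\sqrt3$ that a one-sided bound would miss. Either is perfectly adequate for the application in the paper.
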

\begin{proof}
    Let $\theta_{v,w}\in \left[0,\frac{\pi}2\right]$ be the angle
    between the two vectors $v$ and $w$, so that
    $\|v\times w\| = \|v\|\|w\|\sin\theta_{v,w}$, and between the two
    vectors $P_v$ and $P_w$. Let
    $\lambda_{w,u}\coloneqq\| P_w -(P_w\cdot P_v) P_v\|$, the modulus
    of the component of $P_w$ orthogonal to $P_v$ (see
    Figure~\ref{fig:lemma_vxw}). Then by simple geometric
    considerations we have
    \[
        \sin\theta_{v,w} = \frac{\lambda_{w,v}}{\|P_w\|} =
        \frac{\lambda_{w,v}}{\|w\|}\|w\|_1
        \overset{\eqref{norme-equiv}}{\leq} \sqrt{3}\lambda_{w,v} \leq
        \sqrt{3}\|P_v - P_w\|,
    \]
\end{proof}

\begin{figure}[h!]
    \begin{tikzpicture}[scale=2.5]
    \def\al{30}
    \def\l{1.5}

    \draw[thick,-latex'] (0,0) -- (2.5,0) node[below]{\footnotesize $v$};
    \draw[thick,-latex'] (0,0) -- ({2*cos(\al)},{2*sin(\al)}) node[above]{\footnotesize $w$};

    \draw[thick,-latex',DBlu] (0,0) -- (\l,0) node[below]
    {\footnotesize $P_v$}; \draw[thick,-latex',DBlu] (0,0) --
    ({2*\l *cos(\al)/2},{2*\l *sin(\al)/2}) node[above left]
    {\footnotesize $P_w$};

    \draw[thin,densely dotted] ({2*\l *cos(\al)/2},{2*\l *sin(\al)/2}) --
    ({2*\l *cos(\al)/2},0);

    \node[below left] at ({2*\l *cos(\al)/2},{2*\l *sin(\al)/4})
    {\footnotesize $\lambda_{w,v}$};

    \draw[thick,latex'-] (\l,0)--({2*\l *cos(\al)/2},{2*\l *sin(\al)/2});

    \node[rotate=-(180-\al)/2,above] at
    ({(\l +2*\l *cos(\al)/2)/2},{\l *sin(\al)/2}) {\footnotesize $P_v-P_w$};

    \centerarcfill[fill=red,opacity=0.2](0,0)(0:\al:0.4);
    \node[red] at (0.55,0.13) {\footnotesize $\theta_{v,w}$};
\end{tikzpicture}
    \caption{Graphical representation of vectors $P_v$ and $P_w$, along
      with the quantities involved in the proof of
      Lemma~\ref{lem-geo}.}\label{fig:lemma_vxw}
\end{figure}
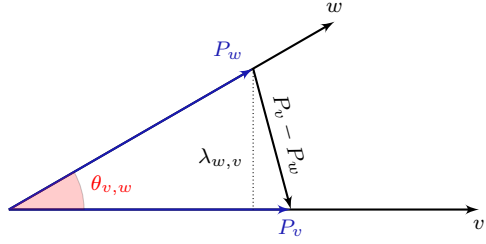

\begin{lemma}\label{lem:matrix_ineq}
    Let $\Phi$ be an arbitrary composition of the maps $\phi_0$ and
    $\phi_1$. Then the matrix $M_\Phi$ satisfies
    $\|v(\Phi)\|_1\geq \|v_1(\Phi)\|_1$ and
    $\|v(\Phi)\|_1\geq \|v_2(\Phi)\|_1$.
\end{lemma}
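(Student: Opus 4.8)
Here is how I would approach Lemma~\ref{lem:matrix_ineq}. The idea is to rephrase the two inequalities as a single invariance statement for a cone in $\R^3$ and then run a short induction on the length of the composition.

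First I would use the fact recalled above that composition of the maps $\phi_i$ corresponds to multiplication of the associated matrices $M_i$, so that if $\Phi=\phi_{\omega_1}\circ\cdots\circ\phi_{\omega_k}$ then $M_\Phi=M_{\omega_1}\cdots M_{\omega_k}$. Since $M_0$ and $M_1$ have non-negative entries, so does every such product $M_\Phi$; hence for each row $w$ of $M_\Phi$ one has $\|w\|_1=\langle w,\mathbf 1\rangle$ with $\mathbf 1\coloneqq(1,1,1)^t$. Thus $\|v(\Phi)\|_1$, $\|v_1(\Phi)\|_1$ and $\|v_2(\Phi)\|_1$ are exactly the first, second and third components of the vector $c\coloneqq M_\Phi\mathbf 1\in\R^3$, and the lemma becomes the assertion that the first component of $c$ is at least as large as each of the other two.

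Next I would introduce the cone $\mathcal K\coloneqq\{(a,b,d)^t\in\R^3\,:\,0\le b\le a\text{ and }0\le d\le a\}$ and check that $M_0\mathcal K\subseteq\mathcal K$ and $M_1\mathcal K\subseteq\mathcal K$. This is immediate from the explicit forms of $M_0$ and $M_1$: one computes $M_0(a,b,d)^t=(a+d,\,a,\,b)^t$ and $M_1(a,b,d)^t=(a+d,\,b,\,d)^t$, and in both vectors the first entry $a+d$ dominates the other two, since $d\ge 0$ gives $a+d\ge a\ge b$ while $a+d\ge d$ is clear from $a\ge 0$.

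Finally, since $\mathbf 1\in\mathcal K$, an induction on $k=|\omega|$ — peeling off the leftmost factor $M_{\omega_1}$ from $M_\Phi=M_{\omega_1}M_{\Phi'}$, applying the inductive hypothesis to the shorter composition $\Phi'$ to get $M_{\Phi'}\mathbf 1\in\mathcal K$, and then using $M_{\omega_1}\mathcal K\subseteq\mathcal K$ — shows that $c=M_\Phi\mathbf 1\in\mathcal K$. This is precisely the inequality $\|v(\Phi)\|_1\ge\max\{\|v_1(\Phi)\|_1,\|v_2(\Phi)\|_1\}$, so the lemma follows. I do not expect any genuine obstacle here: the only step that requires a small idea is guessing the invariant cone $\mathcal K$, after which everything reduces to the two elementary matrix computations above.
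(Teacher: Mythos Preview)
Your proof is correct and follows essentially the same approach as the paper: both argue by induction on the length of the composition and check that left-multiplication by $M_0$ or $M_1$ preserves the desired inequalities between the row sums. Your packaging via the invariant cone $\mathcal K$ and the observation that the row sums are the components of $M_\Phi\mathbf 1$ is a clean reformulation, but the underlying computation is identical to the paper's direct verification on the rows of $M_0M_\Phi$ and $M_1M_\Phi$.
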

\begin{proof}
    We argue by induction on the length $l\geq 1$ of $\Phi$ as a
    composition of maps. If $l=1$, then $\Phi$ is either $\phi_0$ or
    $\phi_1$, and in both cases the thesis is true. For the inductive
    step, let $l\geq 1$ and suppose that the thesis is true for a
    certain $\Phi$ of length $l$. Let
    \[
        M_{\Phi} = \begin{pmatrix}
           r & s & t\\
       r_1 & s_1 & t_1 \\
       r_2 & s_2 & t_2
        \end{pmatrix}
    \]
    be the matrix of $\Phi$. We have
    \[
        M_{\phi_0\circ\Phi} = M_0M_\Phi =
        \begin{pmatrix}
            r+r_2 & s+s_2 & t+t_2\\
            r & s & t \\
            r_1 & s_1 & t_1
        \end{pmatrix}
        \quad\text{and}\quad M_{\phi_1\circ\Phi} = M_1M_\Phi =
        \begin{pmatrix}
            r+r_2 & s+s_2 & t+t_2 \\
            r_1 & s_1 & t_1 \\
            r_2 & s_2 & t_2
        \end{pmatrix}.
    \]
    For the first matrix it holds that
    \[
        \|v(\phi_0\circ\Phi)\|_1=r+s+t+r_2+s_2+t_2\geq r+s+t =
        \|v_1(\phi_0\circ \Phi)\|_1
    \]
    since $r_2,\,s_2,\,t_2\geq 0$, and that
    \[
        \|v(\phi_0\circ\Phi)\|_1\geq r+s+t\geq
        r_1+s_1+t_1=\|v_2(\phi_0\circ \Phi)\|_1
    \]
    by the inductive assumption. Analogous estimates hold for
    $M_{\phi_1\circ \Phi}$.
\end{proof}

\begin{lemma}\label{lem-matrici}
    Let $\Phi$ be an arbitrary composition of the maps $\phi_0$ and
    $\phi_1$.
    \begin{enumerate}[label={\upshape(\roman*)},wide =
        0pt,leftmargin=*]
      \item If $D\Phi$ denotes the Jacobian matrix of $\Phi$, then
        \[ \begin{aligned}
            \max \left\{ \sup_A\left(\left|(D\Phi)_{11} \right| +
                \left|(D\Phi)_{21}\right|\right),\, \sup_A\left(
                \left|(D\Phi)_{12} \right| +
                \left| (D\Phi)_{22}\right|\right)\right\}\leq  \\
            &\hspace{-5cm}\le 27\sqrt{3} \left(\norm{P_{v(\Phi)}-P_{v_1(\Phi)}}+
              \norm{P_{v(\Phi)}-P_{v_2(\Phi)}}\right).\nonumber
        \end{aligned} \]
      \item For $k=0,1$, let $D_k$ be the Jacobian matrix of
        $\phi_k\circ \Phi$, then
        \[ \begin{aligned}
            \max \left\{ \sup_A\left(\left|(D_k)_{11} \right| +
                \left|(D_k)_{21}\right|\right),\, \sup_A\left(
                \left|(D_k)_{12} \right| + \left|
                  (D_k)_{22}\right|\right)\right\}\leq \\
            &\hspace{-5cm}\le 27\sqrt{3}
            \left(\norm{P_{v(\Phi)+v_2(\Phi)}-P_{v_1(\Phi)}}+
              \norm{P_{v(\Phi)+v_2(\Phi)}-P_{v_2(\Phi)}}\right).\nonumber
        \end{aligned} \]
        In particular the worst case is realised for $k=0$.
      \item We have that
        \[
            \norm{P_{v(\Phi)+v_2(\Phi)}-P_{v_1(\Phi)}}+
            \norm{P_{v(\Phi)+v_2(\Phi)}-P_{v_2(\Phi)}} \leq
            \norm{P_{v(\Phi)}-P_{v_1(\Phi)}}+
            \norm{P_{v(\Phi)}-P_{v_2(\Phi)}}.
        \]
    \end{enumerate}
\end{lemma}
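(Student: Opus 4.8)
The plan is to read all three items off the explicit Jacobian formula \eqref{form-dpsi}, the geometric estimate of Lemma~\ref{lem-geo}, the norm comparisons of Lemma~\ref{lem:matrix_ineq}, and one elementary remark about normalised sums of non-negative vectors: if $v,w\in\R^3$ have non-negative components, then $v+w=\norm{v}_1 P_v+\norm{w}_1 P_w$, hence $P_{v+w}=(1-\lambda)P_v+\lambda P_w$ with $\lambda=\frac{\norm{w}_1}{\norm{v}_1+\norm{w}_1}\in[0,1]$; thus $P_{v+w}$ lies on the segment $[P_v,P_w]$, and it is at least as close to $P_v$ as to $P_w$ as soon as $\norm{w}_1\le\norm{v}_1$. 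For part (i), writing $v=v(\Phi)$, $v_1=v_1(\Phi)$, $v_2=v_2(\Phi)$, the four numerators in \eqref{form-dpsi} are, up to signs, the coordinates of the cross products $v\times v_1$ and $v\times v_2$; for instance $(D\Phi)_{11}=\frac{(v\times v_1)_3-(v\times v_1)_1\,y}{(r+sx+ty)^2}$, the first column carrying a $y$ and the second an $x$. Since $0\le x,y\le 1$ on $A$, both quantities $|(D\Phi)_{11}|+|(D\Phi)_{21}|$ and $|(D\Phi)_{12}|+|(D\Phi)_{22}|$ are at most $\frac{\norm{v\times v_1}_1+\norm{v\times v_2}_1}{(r+sx+ty)^2}$. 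Then $\norm{v\times v_i}_1\le\sqrt3\,\norm{v\times v_i}\le 3\,\norm{v}\,\norm{v_i}\,\norm{P_v-P_{v_i}}$ by Lemma~\ref{lem-geo}, $\norm{v_i}\le\norm{v_i}_1\le\norm{v}_1$ by Lemma~\ref{lem:matrix_ineq}, and $(r+sx+ty)^2\ge\frac19(r+s+t)^2\ge\frac19\norm{v}^2$ on $A$ (as in the proof of (h5), using $x\ge\frac12$ and $y\ge\frac13$ there); collecting constants leaves the bound $27\sqrt3\,(\norm{P_v-P_{v_1}}+\norm{P_v-P_{v_2}})$.

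For part (ii), exactly as in the proof of Lemma~\ref{lem:matrix_ineq} the matrix of $\phi_k\circ\Phi$ is $M_kM_\Phi$, so $v(\phi_k\circ\Phi)=v+v_2$ for both $k$, while $(v_1(\phi_1\circ\Phi),v_2(\phi_1\circ\Phi))=(v_1,v_2)$ and $(v_1(\phi_0\circ\Phi),v_2(\phi_0\circ\Phi))=(v,v_1)$. Applying part (i) to $\phi_1\circ\Phi$ gives at once the claimed bound $27\sqrt3\,(\norm{P_{v+v_2}-P_{v_1}}+\norm{P_{v+v_2}-P_{v_2}})$. Applying part (i) to $\phi_0\circ\Phi$ gives instead $27\sqrt3\,(\norm{P_{v+v_2}-P_v}+\norm{P_{v+v_2}-P_{v_1}})$; since $\norm{v_2}_1\le\norm{v}_1$ by Lemma~\ref{lem:matrix_ineq}, the remark above yields $\norm{P_{v+v_2}-P_v}\le\norm{P_{v+v_2}-P_{v_2}}$, so the $k=0$ estimate is again dominated by the common right-hand side, and it is this extra comparison that singles out the $\phi_0$ branch.

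Part (iii) is pure convexity, independent of the particular $\Phi$: with $P_{v+v_2}=(1-\lambda)P_v+\lambda P_{v_2}$ as above one has $\norm{P_{v+v_2}-P_v}=\lambda\norm{P_v-P_{v_2}}$ and $\norm{P_{v+v_2}-P_{v_2}}=(1-\lambda)\norm{P_v-P_{v_2}}$, while $\norm{P_{v+v_2}-P_{v_1}}\le\norm{P_v-P_{v_1}}+\lambda\norm{P_v-P_{v_2}}$ by the triangle inequality; adding the first and last of these and noting that the $\lambda$- and $(1-\lambda)$-terms recombine into $\norm{P_v-P_{v_2}}$ gives the asserted inequality. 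I expect the only real work to be part (i)—matching the entries of \eqref{form-dpsi} with the cross-product coordinates and chasing the numerical constants—after which parts (ii) and (iii) are short, the underlying picture being simply that $P_{v+v_2}$ sits on the segment $[P_v,P_{v_2}]$, nearer the endpoint whose defining vector has the larger $1$-norm.
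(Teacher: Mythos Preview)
Your proof is correct and follows essentially the same route as the paper's: the cross-product reading of the numerators in \eqref{form-dpsi}, the lower bound $(r+sx+ty)\ge\frac13(r+s+t)$ on $A$, the chain Lemma~\ref{lem-geo} $\to$ \eqref{norme-equiv} $\to$ Lemma~\ref{lem:matrix_ineq} for the constants, the row identification via $M_kM_\Phi$ for part~(ii), and the convex-combination/triangle-inequality argument for part~(iii). The paper also records the geometric picture behind (iii) (perimeter of a subtriangle versus the ambient triangle), but your analytic verification coincides with its written computation.
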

\begin{proof}
    (i) The map $\Phi$ is a composition of $\phi_0$ and $\phi_1$, thus
    of the form~\eqref{form-dpsi}. Since we are interested in the local inverses of $V$, we look at the supremum of the Jacobian matrix of $\Phi$ on $A$. Hence we have
    \begin{align*}
        \sup_A\left( \left| (D\Phi)_{11} \right| + \left|
            (D\Phi)_{21}\right| \right) &\leq 9 \cdot
        \frac{\abs{r_1s-rs_1} + \abs{s_1t-st_1} +
          \abs{r_2s-rs_2} + \abs{s_2t-st_2}}{(r+s+t)^2}\leq \\
        &\leq 9\cdot \frac{\norm{v(\Phi)\times v_1(\Phi)}_1 +
          \norm{v(\Phi)\times v_2(\Phi)}_1}{\norm{v(\Phi)}^2}\leq\\
        &\overset{\eqref{norme-equiv}}{\leq} 9\sqrt{3}\cdot
        \frac{\norm{v(\Phi)\times v_1(\Phi)} + \norm{v(\Phi)\times
            v_2(\Phi)}}{\norm{v(\Phi)}^2}\leq\\
        &\overset{\textrm{Lem.~\ref{lem-geo}}}{\leq} 27\cdot
        \frac{\norm{v_1(\Phi)}\norm{P_{v(\Phi)}-P_{v_1(\Phi)}} +
          \norm{v_2(\Phi)}\norm{P_{v(\Phi)}-P_{v_2(\Phi)}}}{\norm{v(\Phi)}}\leq\\
        &\overset{\eqref{norme-equiv}}{\leq}27\sqrt{3}\cdot
        \frac{\norm{v_1(\Phi)}_1\norm{P_{v(\Phi)}-P_{v_1(\Phi)}} +
          \norm{v_2(\Phi)}_1\norm{P_{v(\Phi)}-P_{v_2(\Phi)}}}{\norm{v(\Phi)}_1}.
    \end{align*}
    From Lemma~\ref{lem:matrix_ineq} we have
    $\norm{v_1(\Phi)}_1\leq \norm{v(\Phi)}_1$ and
    $\norm{v_2(\Phi)}_1\leq \norm{v(\Phi)}_1$, so that
    \[
      \sup_A\, \left( \left| (D\Phi)_{11} \right| +
        \left| (D\Phi)_{21}\right| \right)\leq
      27\sqrt{3}\cdot
      \left(\norm{P_{v(\Phi)}-P_{v_1(\Phi)}}+ \norm{P_{v(\Phi)}-P_{v_2(\Phi)}}\right)
    \]
    The same estimate holds for
    $\sup_A\left( \left| (D\Phi)_{12} \right| + \left|
        (D\Phi)_{22}\right| \right)$ and thus (i) is proved.\\[0.2cm]
    (ii) The matrices associated to the maps $\phi_k\circ \Phi$ for
    $k=0,1$ are
    \[
        M_0M_\Phi =
        \begin{pmatrix}
            v(\Phi)+v_2(\Phi)\\v(\Phi)\\v_1(\Phi)
        \end{pmatrix}
        \quad\text{and}\quad%
        M_1M_\psi =
        \begin{pmatrix}
            v(\Phi)+v_2(\Phi)\\v_1(\Phi)\\v_2(\Phi)
        \end{pmatrix}.
    \]
    Applying (i) to the map $\phi_k\circ \Phi$ we have
    \begin{align*}
        \max \left\{\sup_A\left( \left| (D_0)_{11} \right| + \left|
              (D_0)_{21}\right| \right),\, \sup_A \left(\left|
              (D_0)_{12}
            \right| + \left| (D_0)_{22}\right| \right)\right\} \leq\\
        &\hspace{-5cm}\leq 27\sqrt{3}
        \left(\norm{P_{v(\Phi)+v_2(\Phi)}-P_{v(\Phi)}}
          +\norm{P_{v(\Phi)+v_2(\Phi)}-P_{v_1(\Phi)}}\right)
    \end{align*}
    and
    \begin{align*}
        \max \left\{ \sup_A\left( \left| (D_1)_{11} \right| + \left|
              (D_1)_{21}\right| \right),\,\sup_A \left( \left|
              (D_1)_{12}
            \right| + \left| (D_1)_{22}\right| \right) \right\} \leq\\
        &\hspace{-5cm}\leq 27\sqrt{3}
        \left(\norm{P_{v(\Phi)+v_2(\Phi)}-P_{v_1(\Phi)}}+
          \norm{P_{v(\Phi)+v_2(\Phi)}-P_{v_2(\Phi)}}\right).
    \end{align*}
    To finish the proof it suffices to show that
    \begin{equation}\label{eq:dist_v_v2}
        \norm{P_{v(\Phi)+v_2(\Phi)}-P_{v(\Phi)}}\leq
        \norm{P_{v(\Phi)+v_2(\Phi)}-P_{v_2(\Phi)}}.
    \end{equation}
    To this end, note that $P_{v(\Phi)+v_2(\Phi)}$ is a convex
    combination of $P_{v(\Phi)}$ and $P_{v_2(\Phi)}$, in particular
    \[
        P_{v(\Phi)+v_2(\Phi)} =
        \frac{\norm{v(\Phi)}_1}{\norm{v(\Phi)}_1 + \norm{v_2(\Phi)}_1}
        P_{v(\Phi)} + \frac{\norm{v_2(\Phi)}_1}{\norm{v(\Phi)}_1 +
          \norm{v_2(\Phi)}_1}P_{v_2(\Phi)},
    \]
    and since from Lemma~\ref{lem:matrix_ineq} we have
    $\norm{v_2(\Phi)}_1 \leq \norm{v(\Phi)}_1$, \eqref{eq:dist_v_v2}
    easily follows.\\[0.2cm]
    (iii) The three points $P_{v(\Phi)}$, $P_{v_1(\Phi)}$ and
    $P_{v_2(\Phi)}$ belong to the standard 2-symplex in $\R^3$ and
    define a triangle $\triangle_\Phi$ since they are linearly
    independent. Furthermore, (ii) implies that
    $P_{v(\Phi)+v_2(\Phi)} = \lambda P_{v(\Phi)} + (1-\lambda)
    P_{v_2(\Phi)}$ for some $\lambda\in \left(\frac 12,1\right)$. Also
    $P_{v(\Phi)+v_2(\Phi)}$, $P_{v_1(\Phi)}$ and $P_{v_2(\Phi)}$
    define a triangle $\triangle_\Phi'$, which is a subtriangle of
    $\triangle_\Phi$. In particular, $\triangle_\Phi$ and
    $\triangle_\Phi'$ have a common side and the non-common vertex
    $P_{v(\Phi)+v_2(\Phi)}$ belongs to the side of $\triangle_\Phi$
    with vertices $P_{v(\Phi)}$ and $P_{v_2(\Phi)}$. The inequality to
    prove easily follows from this geometric interpretation, since
    perimeter of the subtriangle $\triangle_\Phi'$ is less than or
    equal to the perimeter of $\triangle_\Phi$. Besides this
    geometrical approach, an analytic estimate easily follows from the
    triangle inequality:
    \begin{align*}
        \norm{P_{v(\Phi)+v_2(\Phi)}-P_{v_1(\Phi)}} &+
        \norm{P_{v(\Phi)+v_2(\Phi)}-P_{v_2(\Phi)}}= \\ &=
        \norm{\lambda P_{v(\Phi)} + (1-\lambda)
          P_{v_2(\Phi)}-P_{v_1(\Phi)}} +
        \lambda \norm{P_{v(\Phi)}- P_{v_2(\Phi)}}= \\
        &=
        \norm{-(1-\lambda)(P_{v(\Phi)}-P_{v_2(\Phi)})+P_{v(\Phi)}-P_{v_1(\Phi)}}
        + \lambda \norm{P_{v(\Phi)}- P_{v_2(\Phi)}}\leq \\
        &\leq
        (1-\lambda)\norm{P_{v(\Phi)}-P_{v_2(\Phi)}}+\norm{P_{v(\Phi)}-P_{v_1(\Phi)}}
        + \lambda \norm{P_{v(\Phi)}- P_{v_2(\Phi)}}=\\
        &=\norm{P_{v(\Phi)}-P_{v_1(\Phi)}}+
        \norm{P_{v(\Phi)}-P_{v_2(\Phi)}}.
    \end{align*}
\end{proof}

\begin{lemma}\label{lem-decr}
    Let $\psi_{i_1,\,\dots,\,i_n} : A\rightarrow
    C_{i_1,\,\dots,\,i_n}$ be a local inverse of $V^n$. Then
    \[
    \norm{P_{v(\psi_{i_1,\,\dots,\,i_n})}-P_{v_1(\psi_{i_1,\,\dots,\,i_n})}}
    +
    \norm{P_{v(\psi_{i_1,\,\dots,\,i_n})}-P_{v_2(\psi_{i_1,\,\dots,\,i_n})}}
    \leq \tilde d(n)
    \]
    where
    \[
        \tilde d(n)\coloneqq \norm{P_{v(\phi_0^{n})}-P_{v_1(\phi_0^{n})}} +
        \norm{P_{v(\phi_0^{n})}-P_{v_2(\phi_0^{n})}}.
    \]
\end{lemma}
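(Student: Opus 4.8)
The claim is the bound $\delta(\psi_{i_1,\dots,i_n})\le\tilde d(n)=\delta(\phi_0^n)$, where I write $\delta(\Phi):=\|P_{v(\Phi)}-P_{v_1(\Phi)}\|+\|P_{v(\Phi)}-P_{v_2(\Phi)}\|$ for a composition $\Phi$ of the maps $\phi_0,\phi_1$. The plan is to show that $\phi_0^n$ is the extremal case: among all compositions occurring as local inverses of $V^n$, the quantity $\delta$ is largest for the ``slowest'' branch $\phi_0^n$. Two monotonicity facts are already available from Lemma~\ref{lem-matrici}: part~(iii) gives $\delta(\phi_1\circ\Phi)\le\delta(\Phi)$, and inequality~\eqref{eq:dist_v_v2} inside the proof of part~(ii) gives $\delta(\phi_0\circ\Phi)\le\delta(\phi_1\circ\Phi)$, so that
\[
\delta(\phi_0\circ\Phi)\le\delta(\phi_1\circ\Phi)\le\delta(\Phi)
\]
for every $\Phi$; that is, $\delta$ never increases when a symbol is prepended. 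I would also keep Lemma~\ref{lem:matrix_ineq} in hand, which says that $v(\Phi)$ always has the largest $1$-norm among the three rows of $M_\Phi$. Geometrically this means that the point $P_{v(\Phi)+v_2(\Phi)}$, which becomes the new distinguished vertex after left-composing $\phi_0$, lies in the half of the segment $[P_{v(\Phi)},P_{v_2(\Phi)}]$ nearest $P_{v(\Phi)}$, since $P_{v(\Phi)+v_2(\Phi)}=\lambda P_{v(\Phi)}+(1-\lambda)P_{v_2(\Phi)}$ with $\lambda=\|v(\Phi)\|_1\big/\big(\|v(\Phi)\|_1+\|v_2(\Phi)\|_1\big)\in[\tfrac12,1)$.

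Next I would exploit the structure of the induced map. Since $A\subseteq\Gamma_0$ and $\tilde C_2=\emptyset$, every local inverse $\psi_i$ of $V$ is, read as a word in $\phi_0,\phi_1$, either $\phi_0$ itself (return time $1$) or a word beginning with $\phi_0\circ\phi_0$ (return time $\ge 3$); in all cases it begins with $\phi_0$. Consequently
\[
\psi_{i_1,\dots,i_n}=\psi_{i_1}\circ\cdots\circ\psi_{i_n}=\phi_0\circ\Theta_1\circ\phi_0\circ\Theta_2\circ\cdots\circ\phi_0\circ\Theta_n
\]
for suitable (possibly empty) words $\Theta_1,\dots,\Theta_n$ in $\phi_0,\phi_1$; in particular $\psi_{i_1,\dots,i_n}$ contains at least $n$ copies of $\phi_0$, and $\phi_0^n$ is itself a local inverse of $V^n$ (on the cylinder where every return time equals $1$), so it belongs to the family to be compared.

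The heart of the proof is an induction on $n$ that makes the extremality of $\phi_0^n$ precise. The inductive hypothesis should carry more than the value of $\delta$: one tracks the whole normalised configuration $\big(P_{v(\Phi)},P_{v_1(\Phi)},P_{v_2(\Phi)}\big)$ together with the $1$-norms $\|v(\Phi)\|_1\ge\max\{\|v_1(\Phi)\|_1,\|v_2(\Phi)\|_1\}$, and formulates a domination relation ``$\preceq$'' with the following properties: (a) if $\Psi\preceq\phi_0^k$ then $\delta(\Psi)\le\delta(\phi_0^k)$; (b) $\preceq$ is preserved (or strengthened) under left-composition by any symbol, i.e.\ $\Psi\preceq\phi_0^k$ implies $\phi_b\circ\Psi\preceq\phi_0^k$ for $b=0,1$; and (c) $\Psi\preceq\phi_0^k$ implies $\phi_0\circ\Psi\preceq\phi_0^{k+1}$. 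Granting this, the inductive step is immediate: writing the outermost block as $\phi_0\circ\Theta_1$ and using the inductive hypothesis $\psi_{i_2,\dots,i_n}\preceq\phi_0^{n-1}$, property (b) gives $\Theta_1\circ\psi_{i_2,\dots,i_n}\preceq\phi_0^{n-1}$ and then property (c) gives $\psi_{i_1,\dots,i_n}=\phi_0\circ\Theta_1\circ\psi_{i_2,\dots,i_n}\preceq\phi_0^n$, whence $\delta(\psi_{i_1,\dots,i_n})\le\delta(\phi_0^n)=\tilde d(n)$ by (a). The base case $n=1$ is direct from the monotonicity above, since a single block is $\phi_0\circ\Theta_1$ and $\delta(\phi_0\circ\Theta_1)\le\delta(\Theta_1)\le\delta(\phi_0)=\tilde d(1)$.

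The main obstacle is pinning down the relation $\preceq$ and, above all, verifying property (c) — the stability of the domination under one step of $\phi_0$. This is the delicate point, because $\phi_0\circ$ moves the vertex $P_{v_2}$ to $P_{v+v_2}$ on $[P_v,P_{v_2}]$ and then relabels the vertices cyclically, and the position of $P_{v+v_2}$ on that segment — hence the rate at which the configuration contracts — depends on the $1$-norms $\|v\|_1,\|v_2\|_1$ and not only on the normalised vertices, so a naive ``nested triangles'' comparison does not suffice. The purpose of Lemma~\ref{lem:matrix_ineq} — and in fact of the strict inequality $\|v(\Phi)\|_1>\|v_2(\Phi)\|_1$, which holds for every nonempty composition — is precisely to confine $P_{v+v_2}$ to the $P_v$-half of the segment, and verifying (c) amounts to checking that, under this constraint, one $\phi_0$-step applied to a configuration dominated by that of $\phi_0^k$ lands inside the region governed by $\phi_0^{k+1}$. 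Once $\preceq$ is set up, properties (a) and (b) are routine, and together with the block decomposition above the induction closes.
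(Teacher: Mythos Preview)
Your proposal is not a proof but a plan, and the plan leaves open precisely the step that carries all the weight. You introduce an abstract domination relation $\preceq$ required to satisfy properties (a), (b), (c), and then say that ``the main obstacle is pinning down the relation $\preceq$ and, above all, verifying property~(c)'' --- but you never define $\preceq$ and never verify~(c). Everything else (the block decomposition $\psi_{i_1,\dots,i_n}=\phi_0\circ\Theta_1\circ\cdots\circ\phi_0\circ\Theta_n$, the monotonicity $\delta(\phi_0\circ\Phi)\le\delta(\phi_1\circ\Phi)\le\delta(\Phi)$, the base case) is routine bookkeeping; the only nontrivial content of the lemma is exactly the comparison you defer. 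As you yourself note, the position of $P_{v+v_2}$ on the segment $[P_v,P_{v_2}]$ depends on the $1$-norms and not just on the normalised vertices, so any workable $\preceq$ would have to encode norm information as well as geometry --- and you give no candidate for it. Until $\preceq$ is written down and (c) is checked, nothing has been proved.

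The paper's argument is much shorter and does not pass through any auxiliary relation. It uses only what is already in Lemma~\ref{lem-matrici}: part~(iii) (combined with the computation in~(ii)) shows that the estimation function $\delta$ is non-increasing under left-composition by either $\phi_0$ or $\phi_1$; since every local inverse of $V^n$ is a composition of at least $n$ of these maps and $\phi_0^n$ is the shortest such composition, the paper concludes directly that $\phi_0^n$ realises the largest value of $\delta$ among local inverses of $V^n$, invoking~(ii) for the ``worst case''. Your elaborate inductive scheme is an attempt to make this last step more explicit, which is a reasonable instinct --- the passage from ``$\delta$ decreases under prepending'' to ``$\phi_0^n$ is extremal'' does deserve a word of justification --- but your attempt does not get there.
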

\begin{proof}
    As outlined in Section~\ref{subsec:proof_pde},
    $\psi_{i_1,\,\ldots,\,i_n} = \psi_{i_1}\circ\dots\circ\psi_{i_n}$
    and, for $h=1,\,\ldots,\,n$, $\psi_{i_h}=\phi_0\circ \Phi_{i_h}$,
    where $\Phi_{i_h}$ is empty or a composition of the maps $\phi_0$ and $\phi_1$ beginning with $\phi_0$. In Proposition~\ref{lem-matrici}~(iii) we
    proved that the estimation function introduced in (i) is
    decreasing with respect to the number of compositions of the maps
    $\phi_0$ or $\phi_1$. The inequality of this lemma follows, since
    $\phi_0^n$ contains the least possible number of compositions of
    the maps $\phi_0$ and $\phi_1$ compatible with the definition of
    the local inverses, and by Proposition \ref{lem-matrici}-(ii) realises the worst case.
\end{proof}

\begin{lemma}\label{lem-dn}
    Let $(\tilde d(n))_{n\geq 0}$ be the real sequence introduced in
    Lemma~\ref{lem-decr}. Then $\lim_{n\rightarrow+\infty} \tilde d(n) = 0$.
\end{lemma}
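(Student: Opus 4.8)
The plan is to read off the asymptotics of $\tilde d(n)$ from the matrix picture set up at the beginning of this appendix. Since composition of linear fractional maps corresponds to left multiplication of the associated matrices, $M_{\phi_0^{n}}=M_0^{n}$, and by definition the three vectors $v(\phi_0^{n})$, $v_1(\phi_0^{n})$, $v_2(\phi_0^{n})$ are exactly the first, second and third rows of $M_0^{n}$. Thus $\tilde d(n)$ measures how far apart, after normalising in the $1$-norm, the three rows of $M_0^{n}$ sit on the standard simplex, and it will be enough to prove that each of these normalised rows converges, to one and the same limit.

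The key point is that $M_0$ is a non-negative integer matrix which is primitive: its associated digraph on three vertices is strongly connected and carries the self-loop coming from the entry $(M_0)_{11}=1$, hence is aperiodic; equivalently, a direct computation gives
\[
    M_0^{4}=\begin{pmatrix} 3 & 1 & 2 \\ 2 & 1 & 1 \\ 1 & 1 & 1 \end{pmatrix},
\]
which is strictly positive. Its characteristic polynomial is $\lambda^{3}-\lambda^{2}-1$; one checks (both local extrema of $\lambda^3-\lambda^2-1$ are negative) that it has a unique real root $\beta\approx 1.4656$, with $\beta>1$, the remaining two roots being complex conjugates of modulus $\beta^{-1/2}<1$ (their product with $\beta$ is the constant term). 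So $\beta$ is the Perron eigenvalue of $M_0$.

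By the Perron--Frobenius theorem for primitive matrices, $\beta^{-n}M_0^{n}$ converges as $n\to\infty$ to a rank-one matrix whose three rows are all strictly positive multiples of the left Perron eigenvector $\mathbf l$ of $M_0$. In particular, for $i=1,2,3$ the $i$-th row of $M_0^{n}$ equals $\beta^{n}$ times a vector converging to a positive multiple of $\mathbf l$, so dividing it by its (necessarily positive, and $\asymp\beta^{n}$) $1$-norm kills the scalar and gives
\[
    P_{v(\phi_0^{n})},\quad P_{v_1(\phi_0^{n})},\quad P_{v_2(\phi_0^{n})}\ \longrightarrow\ \frac{\mathbf l}{\norm{\mathbf l}_1}.
\]
Consequently each of the two terms $\norm{P_{v(\phi_0^{n})}-P_{v_1(\phi_0^{n})}}$ and $\norm{P_{v(\phi_0^{n})}-P_{v_2(\phi_0^{n})}}$ in the definition of $\tilde d(n)$ tends to $0$, which is exactly the claim.

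I do not expect a genuine obstacle here: everything reduces to primitivity of the $3\times 3$ matrix $M_0$ and the classical Perron--Frobenius convergence. If one prefers to avoid quoting Perron--Frobenius, the same conclusion follows by hand: from $M_0^{n+1}=M_0M_0^{n}$ the rows $R^{(n)}_j$ ($j=1,2,3$) of $M_0^{n}$ obey $R^{(n+1)}_1=R^{(n)}_1+R^{(n)}_3$, $R^{(n+1)}_2=R^{(n)}_1$, $R^{(n+1)}_3=R^{(n)}_2$, so that $w_n\coloneqq R^{(n)}_1$ satisfies the linear recurrence $w_n=w_{n-1}+w_{n-3}$ while $v(\phi_0^{n})=w_n$, $v_1(\phi_0^{n})=w_{n-1}$, $v_2(\phi_0^{n})=w_{n-2}$; since the dominant characteristic root $\beta$ strictly exceeds the moduli of the other two, $w_n/\beta^{n}\to\mathbf a$ for a fixed vector $\mathbf a$ with non-negative entries, whence $P_{w_n}\to\mathbf a/\norm{\mathbf a}_1$ and likewise for the shifts $w_{n-1},w_{n-2}$, giving again $\tilde d(n)\to 0$.
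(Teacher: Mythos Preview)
Your argument is correct. Both you and the paper establish that the three normalised rows of $M_0^{n}$ converge to a common limit on the simplex, which is exactly what $\tilde d(n)\to 0$ means. The difference is one of packaging: the paper works explicitly, identifying the entries of $M_0^{n}$ with the Narayana (``cows'') sequence $f_n$ given by $f_{n+3}=f_{n+2}+f_n$, and then uses the known ratio limit $f_{n+1}/f_n\to\gamma$ (the real root of $x^3-x^2-1$) to show that each coordinate of $P_{v}-P_{v_1}$ and $P_{v}-P_{v_2}$ tends to $0$. Your primary route is more structural, deducing the same conclusion from primitivity of $M_0$ and Perron--Frobenius; your alternative recurrence argument in the last paragraph is essentially the paper's computation in vector form. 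The Perron--Frobenius route is cleaner and makes clear that nothing special about this particular matrix is needed beyond primitivity, while the paper's explicit computation is entirely self-contained and yields, in principle, a rate of convergence. One small point: in your final paragraph you should say that $\mathbf a$ has \emph{strictly positive} entries (which follows from $M_0^{4}>0$), so that the normalisation $\mathbf a/\norm{\mathbf a}_1$ is well defined.
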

\begin{proof}
    Arguing by induction on $n\geq 0$, it is easy to prove that
    \[
    M_{\phi_0^n} = M_0^n =
    \begin{pmatrix}
    f_{n+4} & f_{n+2} & f_{n+3}\\
    f_{n+3} & f_{n+1} & f_{n+2}\\
    f_{n+2} & f_{n} & f_{n+1}
    \end{pmatrix},
    \]
    where $(f_n)_{n\geq 0}$ is recursively defined to be
    \[
    \begin{cases}
    f_0=0\\f_1=1\\f_2=0\\
    f_{n+3} = f_{n+2}+f_n & \text{for }n\geq 1
    \end{cases}.
    \]
    The sequence $(\nu_n)_{n\geq 0}$, $\nu_n\coloneqq f_{n+4}$, is
    also referred to as the Narayana's cows sequence. It is known that
    this sequence has a ratio limit, \emph{i.e.} there exists
    $\lim_{n\rightarrow \infty}\frac{\nu_{n+1}}{\nu_n} \eqqcolon
    \gamma <+\infty$ \cite{vernon}\footnote{More precisely, $\gamma$
      is the only real root of the characteristic equation
      $x^3-x^2-1=0$.}. Note that for $n\geq 4$ and for $r\geq 1$ we
    have
    $\frac{f_{n+r}}{f_n} = \prod_{j=0}^{r-1}
    \frac{f_{n+1+j}}{f_{n+j}}$, so that
    \begin{equation}\label{eq:narayana}
        \lim_{n\rightarrow +\infty} \frac{f_{n+r}}{f_n} = \gamma^r.
    \end{equation}
    For $n\geq 0$ we have
    \begin{align*}
        P_{v_1(\phi_0^{n})} &= \frac
        1{f_{n+1}+f_{n+2}+f_{n+3}}
        \begin{pmatrix}f_{n+3}\\f_{n+1}\\f_{n+2}\end{pmatrix} =
        \frac 1{f_{n+5}}
        \begin{pmatrix}f_{n+3}\\f_{n+1}\\f_{n+2}\end{pmatrix}\\
        P_{v_2(\phi_0^{n})} &= \frac
        1{f_{n}+f_{n+1}+f_{n+2}}
        \begin{pmatrix}f_{n+2}\\f_{n}\\f_{n+1}\end{pmatrix} =
        \frac 1{f_{n+4}}
        \begin{pmatrix}f_{n+2}\\f_{n}\\f_{n+1}\end{pmatrix},\\
        P_{v(\phi_0^{n})} &= \frac
        1{f_{n+2}+f_{n+3}+f_{n+4}}
        \begin{pmatrix}f_{n+4}\\f_{n+2}\\f_{n+3}\end{pmatrix} =
        \frac
        1{f_{n+6}}
        \begin{pmatrix}f_{n+4}\\f_{n+2}\\f_{n+3}\end{pmatrix}
    \end{align*}
    so that
    \begin{align*}
        \tilde d(n)&\leq \norm{P_{v(\phi_0^{n})}-P_{v_1(\phi_0^{n})}}_1 +
        \norm{P_{v(\phi_0^{n})}-P_{v_2(\phi_0^{n})}}_1 =\\
        &= \sum_{k=0}^2
        \left(\abs{\frac{f_{n+k+2}}{f_{n+6}}-\frac{f_{n+k+1}}{f_{n+5}}}
          +
          \abs{\frac{f_{n+k+2}}{f_{n+6}}-\frac{f_{n+k}}{f_{n+4}}}\right).
    \end{align*}
    Using \eqref{eq:narayana}, for each $k=0,\,1,\,2$ we have
    $\frac{f_{n+k+2}}{f_{n+6}}-\frac{f_{n+k+1}}{f_{n+5}}\rightarrow
    \gamma^{4-k}-\gamma^{4-k}=0$ and analogously
    $\frac{f_{n+k+2}}{f_{n+6}}-\frac{f_{n+k}}{f_{n+4}} \rightarrow
    0$ as $n\rightarrow+\infty$. This proves that
    $\lim_{n\rightarrow+\infty} \tilde d(n)= 0$.
\end{proof}

\begin{proof}[Proof of Proposition \ref{prop-estimate-dpsi}] It follows directly from Lemma~\ref{lem-matrici}-(i), \ref{lem-decr} and~\ref{lem-dn} with $d(n)=27 \sqrt{3}\, \tilde d(n)$.
\end{proof}

\section{The wandering rate of the set $A$}
\label{sec:varying-seq}

The set $A$ is defined in \eqref{the-set-A} and it is the triangle
with vertices $Q_1=(\frac 12, \frac 12)$, $Q_2= (\frac 23, \frac 13)$
and $Q_3=(1,1)$, with the sides $Q_1Q_2$ and $Q_2Q_3$ not included. We
consider the wandering rate $w_n(A)$ for $n\geq 1$, which is defined
to be
\[
    w_n(A) \coloneqq \sum_{k=0}^{n-1} \mu(A \cap \{\varphi >k\}),
\]
where $\varphi$ is the first-return time function in $A$. Extending
the function $\varphi$ to all $\trianglecl$ by
\[
    \varphi(x,y) \coloneqq \min \left\{ n\ge 1\, :\, S^n(x,y)\in
      A\right\}
\]
we obtain the \emph{hitting time} function of $A$, which is
well-defined and finite $\mu$-almost everywhere since the system
$(\trianglecl, \mu, S)$ is conservative and ergodic. We now recall
that, for $k\geq 1$,
\[
    \mu(A \cap \{ \varphi >k\}) = \mu(A^\complement  \cap \{ \varphi =k\}),
\]
where $A^\complement \coloneqq \trianglecl \setminus A$ \cite[Lemma 1]{zwei}. We
thus study the diverging sequence
$\sum_{k=1}^n \mu(A^\complement  \cap \{ \varphi =k\})$. The first step is to
study the structure of $A^\complement  \cap \{ \varphi =k\}$ for $k\geq 1$, the
set of points in $A^\complement $ which hit $A$ for the first time after exactly
$k$ iterations of the map $S$. This set can be expressed in terms of
the local inverse of $S$ as follows. Let
\[
    \Omega_k \coloneqq \set{\omega \in \{0,1\}^k \,:\, \pi_{\omega_i
        \omega_{i+1}}=1\ \forall i=0,\,\ldots,\,k-2,\ \omega_{k-1}=1},
    \quad\text{where }
    \Pi= (\pi_{ij})_{i,j=0,1} = \begin{pmatrix} 0 & 1\\
        1 & 1\end{pmatrix}.
\]
In this way, $\Omega_k$ is the set of binary words of length $k$, which all
end with a ``$1$'', and in which the string ``$00$'' never appears. Then
\[
    A^\complement  \cap \{\varphi=k\} =%
    \bigcup_{\omega\in \Omega_k} \phi_\omega(A) =%
    \bigcup_{\omega \in \Omega_k} \phi_{\omega_0} \circ
    \phi_{\omega_1} \circ \dots \circ \phi_{\omega_{k-2}} \circ \phi_1
    (A).
\]
Indeed, a point in
$\phi_{\omega_0} \circ \phi_{\omega_1} \circ \dots \circ
\phi_{\omega_{k-2}} \circ \phi_1 (A)$ has symbolic code given by
$\omega_0 \omega_1\dots\omega_{k-2} 1 00$ with the word ``$00$'' not
appearing in the first $k$ symbols. This is equivalent to saying that
such a point does not visit $A$ in the first $k-1$ iterations, hence
the point is in $A^\complement  \cap\{ \varphi =k\}$. The converse also obviously
holds. Note that, in case $k=1$, we have
$A^\complement \cap \{\varphi=1\}=\phi_1(A)$.

We first obtain an estimate from above for the wandering rate. In what
follows, we write $a_n \lesssim b_n$ if and only if $a_n=O(b_n)$.

\begin{proposition}\label{prop:w-above}
    The wandering rate $w_n(A)$ satisfies
    $w_n(A) \lesssim \log^2 n$.
\end{proposition}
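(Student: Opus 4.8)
The plan is to dominate $w_n(A)$ by the $\mu$-measure of a single sublevel set of the hitting time to $A$, and then to estimate that measure using the asymptotics of the sequence $\mu(\triangle_m)$ (the same computation already invoked for $\sigma$-finiteness).

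First I would rewrite the wandering rate. Since $\varphi\ge 1$ everywhere one has $\mu(A\cap\{\varphi>0\})=\mu(A)$, and the identity $\mu(A\cap\{\varphi>k\})=\mu(A^\complement\cap\{\varphi=k\})$ recalled above gives
\[
    w_n(A)=\mu(A)+\sum_{k=1}^{n-1}\mu\big(A^\complement\cap\{\varphi=k\}\big)
    =\mu(A)+\mu\big(A^\complement\cap\{\varphi\le n-1\}\big)\le\mu(A)+\mu\big(\{\varphi\le n-1\}\big),
\]
where $\varphi$ denotes the hitting time of $A$ extended to $\trianglecl$. As $\mu(A)<\infty$, it is enough to prove $\mu(\{\varphi\le n-1\})\lesssim\log^2 n$.

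The crucial observation is that an orbit starting in $\triangle_m$ takes at least $m$ steps to reach $A$. Indeed, iterating the relation $S(\triangle_k)=\triangle_{k-1}$, for $(x,y)\in\triangle_m$ one has $S^j(x,y)\in\triangle_{m-j}$ for $0\le j\le m$; for $0\le j\le m-1$ the triangle $\triangle_{m-j}$ is contained in $\Gamma_1$, which is disjoint from $\Gamma_0$, hence from $A\subseteq\Gamma_0$. Therefore $\varphi\ge m$ on $\triangle_m$, so $\mu(\triangle_m\cap\{\varphi\le n-1\})=0$ whenever $m\ge n$. Since the $\triangle_m$ together with the $\mu$-null set $\Lambda$ partition $\trianglecl$, this yields
\[
    \mu\big(\{\varphi\le n-1\}\big)=\sum_{m=0}^{n-1}\mu\big(\triangle_m\cap\{\varphi\le n-1\}\big)\le\sum_{m=0}^{n-1}\mu(\triangle_m).
\]

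It then remains to bound $\sum_{m=0}^{n-1}\mu(\triangle_m)$. A direct integration of the invariant density $h(x,y)=\frac1{xy}$ over $\triangle_m$ (equivalently, the computation of $\mu(\triangle_m)$ carried out in \cite{garr2}) yields $\mu(\triangle_0),\mu(\triangle_1)<\infty$ and $\mu(\triangle_m)=\log\!\big(1+\tfrac1m\big)\,\log(m+1)+O(m^{-2})\asymp\frac{\log m}{m}$ as $m\to\infty$. Consequently $\sum_{m=0}^{n-1}\mu(\triangle_m)=O(1)+\sum_{m=2}^{n-1}O\!\big(\tfrac{\log m}{m}\big)=O(\log^2 n)$, and together with the two previous displays this gives $w_n(A)\lesssim\log^2 n$. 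The only step requiring genuine care is the asymptotics $\mu(\triangle_m)\asymp\frac{\log m}{m}$, which must be made quantitative (not merely finite, as in Proposition~\ref{prop-mis-inv}) so as to produce the exponent $2$; everything else is bookkeeping with the partition $\{\triangle_m\}$, the relation $S(\triangle_k)=\triangle_{k-1}$, and the disjointness $\Gamma_0\cap\Gamma_1=\emptyset$. (Note that the matrix estimates of Appendix~\ref{sec:matrices} are not needed for the upper bound; they will however be the natural tool for the matching lower bound of Proposition~\ref{prop:w-below}.)
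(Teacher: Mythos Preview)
Your argument is correct and follows essentially the same route as the paper's: both reduce to the bound $w_n(A)\le \mu(A)+\sum_{m=0}^{n-1}\mu(\triangle_m)$ via the observation that points in $\triangle_m$ need at least $m$ iterates of $S$ to reach $A\subseteq\Gamma_0$, and then estimate the sum. The only cosmetic difference is that the paper evaluates $\sum_{m=0}^{n-1}\mu(\triangle_m)$ in one stroke via the strip isomorphism as $\int_0^n \frac{\log(1+v)}{v}\,dv\lesssim\log^2 n$, whereas you first extract the termwise asymptotics $\mu(\triangle_m)\asymp\frac{\log m}{m}$ and then sum; the content is the same.
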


\begin{proof}
    Using the properties of the map $S$ and its local inverses, one
    immediately  verifies that
    \[
        \bigcup_{k=1}^n \left(A^\complement  \cap \{ \varphi =k\}\right) \subseteq
        \bigcup_{k=0}^{n-1} \triangle_k,
    \]
    where $\{\triangle_k\}_{k\geq 0}$ is the partition represented in
    Figure \ref{fig:partition}. Hence
    \[
        w_n(A) \le \sum_{k=0}^{n-1} \mu(\triangle_k) .
    \]
    Using now the dynamical system defined in Section \ref{sec:ess} on
    the strip $\Sigma$, we have $\mu(\triangle_k) = \rho(\Sigma_k)$
    for all $k\geq 0$, so that
    \[
        w_n(A) \le \sum_{k=0}^{n-1} \mu(\triangle_k) =
        \sum_{k=0}^{n-1} \rho(\Sigma_k) = \sum_{k=0}^{n-1}
        \int_k^{k+1} \left( \int_0^1 \frac{1}{1+uv} du\right) dv =
        \int_0^n \frac{\log(1+v)}{v} dv \lesssim \log^2 n.
    \]
\end{proof}

To obtain an estimate from below, we use the matrix
representation of the local inverses defined in
Appendix~\ref{sec:matrices}.

\begin{lemma} \label{lem:integral} For a map
    $\psi = \phi_{\omega_0} \circ \phi_{\omega_1} \circ \dots \circ
    \phi_{\omega_{k-2}} \circ \phi_1$ with matrix representation
    \[
        M_\psi = \begin{pmatrix}
            r & s & t \\
            r_1 & s_1 & t_1 \\
            r_2 & s_2 & t_2
        \end{pmatrix}
    \]
    it holds that
    \[
        \frac{m(A)}{(r_1+s_1+t_1)(r_2+s_2+t_2)(r+s+t)} \le
        \mu(\psi(A)) \le \frac{27
          m(A)}{(r_1+s_1+t_1)(r_2+s_2+t_2)(r+s+t)}
    \]
\end{lemma}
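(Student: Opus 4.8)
The plan is to evaluate $\mu(\psi(A))$ directly through the change of variables formula and then to squeeze the resulting integrand between two explicit constants. Recall that $\mu$ has density $h(x,y)=\frac1{xy}$ with respect to Lebesgue measure, so that $\mu(\psi(A))=\int_{\psi(A)}\frac{dx\,dy}{xy}$. First I would substitute $(x,y)=\psi(u,v)$ with $(u,v)\in A$, writing $\psi(u,v)=(\psi_1(u,v),\psi_2(u,v))$ in the form~\eqref{form-psi} and using Proposition~\ref{prop-det} to write $J\psi(u,v)=(r+su+tv)^{-3}$. The two powers of $r+su+tv$ coming out of $1/(\psi_1\psi_2)$ then cancel against two of the three powers in the Jacobian, leaving
\[
    \mu(\psi(A))=\int_A\frac{du\,dv}{(r_1+s_1u+t_1v)(r_2+s_2u+t_2v)(r+su+tv)}.
\]

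The next step is to bound the integrand uniformly on $A$. For every $(u,v)\in A$ one has $0\le u,v\le1$, so each of the three linear forms is at most its coefficient sum, for instance $r+su+tv\le r+s+t$. For the matching lower bound I would use the description of $A$ recalled at the start of this appendix as the triangle with vertices $Q_1=(\tfrac12,\tfrac12)$, $Q_2=(\tfrac23,\tfrac13)$, $Q_3=(1,1)$: a linear function on a triangle attains its minimum at a vertex, and inspecting the vertices gives $u\ge\tfrac12$ and $v\ge\tfrac13$ on $A$, in particular $u,v\ge\tfrac13$. Since the coefficient of the constant term equals $1\ge\tfrac13$, this yields $r+su+tv\ge\tfrac13(r+s+t)$, and similarly for the other two forms. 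Moreover the three coefficient sums are positive: $r+s+t>0$ by Proposition~\ref{prop-det}, while $r_1+s_1+t_1$ and $r_2+s_2+t_2$ cannot vanish, for otherwise $\psi_1$ or $\psi_2$ would be identically zero and $\psi(A)$ would be Lebesgue-null, contradicting that $\psi$ is injective with nonvanishing Jacobian. Hence the integrand lies between $\bigl[(r_1+s_1+t_1)(r_2+s_2+t_2)(r+s+t)\bigr]^{-1}$ and $27\bigl[(r_1+s_1+t_1)(r_2+s_2+t_2)(r+s+t)\bigr]^{-1}$ throughout $A$.

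Integrating these two constant bounds over $A$ and using $\int_A du\,dv=m(A)$ gives exactly the claimed two-sided estimate. I do not expect any serious obstacle: once Proposition~\ref{prop-det} is in hand the identity for $\mu(\psi(A))$ is a routine change of variables, and the only point needing a little care is the coordinate bound $u,v\ge\tfrac13$ valid on $A$, which is immediate from the explicit vertices of $A$.
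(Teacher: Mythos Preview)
Your proposal is correct and follows essentially the same route as the paper: change variables via $\psi$ using the density $h(x,y)=\tfrac{1}{xy}$ and Proposition~\ref{prop-det} to reach the integral $\int_A \bigl[(r_1+s_1u+t_1v)(r_2+s_2u+t_2v)(r+su+tv)\bigr]^{-1}\,du\,dv$, then bound each linear factor using $\tfrac13\le u,v\le 1$ on $A$. Your remark on the positivity of the row sums is a welcome addition (it also follows from $\det M_\psi=1$, which forbids a zero row), and your phrase ``the coefficient of the constant term equals $1$'' is a bit awkward---what you mean is that in $r\cdot 1+s\cdot u+t\cdot v$ the factor multiplying $r$ is $1\ge\tfrac13$---but the inequality $r+su+tv\ge\tfrac13(r+s+t)$ is of course correct.
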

\begin{proof} By definition of $\mu$, denoting
    $\psi(x,y) = (\psi_1(x,y),\psi_2(x,y))$,
    \[
        \mu(\psi(A)) = \iint_{\psi(A)} \frac{1}{xy} dx dy = \iint_A
        \frac{1}{\psi_1(x,y) \psi_2(x,y)} \left| J\psi (x,y) \right|
        dxdy.
    \]
    Moreover by Proposition \ref{prop-det}, we have
    \[
        \psi_1(x,y) = \frac{r_1 +s_1 x +t_1y}{r +s x +ty} , \quad
        \psi_2(x,y) = \frac{r_2 +s_2 x +t_2y}{r +s x +ty} , \quad
        J\psi (x,y) = \frac{1}{(r +s x +ty)^3}
    \]
    hence
    \[
        \mu(\psi(A)) = \iint_A \frac{1}{(r_1 +s_1 x +t_1y)(r_2 +s_2 x
          +t_2y)(r +s x +ty)} dxdy
    \]
    Since for $(x,y)\in A$ we can use $\frac 12\le x\le 1$ and
    $\frac 13\le y \le 1$, the proof is complete.
\end{proof}

We are then led to study the terms
\begin{equation}\label{termini}
    t_{\omega_0 \omega_1\dots\omega_{k-2} 1} \coloneqq
    \frac{1}{(r_1+s_1+t_1)(r_2+s_2+t_2)(r+s+t)}
\end{equation}
for the maps
$\psi = \phi_{\omega_0} \circ \phi_{\omega_1} \circ \dots \circ
\phi_{\omega_{k-2}} \circ \phi_1$ with $\omega\in \Omega_k$. We shall also write simply
 $t_{\psi}$ to shorten the notation. Thus we consider the sequence
\[
        \tau_n \coloneqq \sum_{k=1}^n \sum_{\omega\in \Omega_k}
        t_{\omega_0 \omega_1\dots\omega_{k-2} 1},
\]
which by Lemma \ref{lem:integral} satisfies
\begin{equation} \label{stima-uno-term}
    m(A)\sum_{\omega\in \Omega_k} t_{\omega_0
      \omega_1\dots\omega_{k-2} 1} \le \mu(A^\complement  \cap \{ \varphi =k\})
    \le 27m(A)\sum_{\omega\in \Omega_k} t_{\omega_0
      \omega_1\dots\omega_{k-2} 1}.
\end{equation}
and then
\begin{equation}\label{stima-uno}
m(A)\, \tau_n \le w_n(A) \le 27\, m(A)\, \tau_n\, .
\end{equation}
Moreover, given a linear fractional map $\psi$ with matrix representation
\[
    M_\psi = \begin{pmatrix}
        r & s & t \\
        r_1 & s_1 & t_1 \\
        r_2 & s_2 & t_2
    \end{pmatrix}
\]
if we introduce the vector
\[
    V_\psi \coloneqq
    \begin{pmatrix}
        r + s + t \\
        r_1 + s_1 + t_1 \\
        r_2 + s_2 + t_2
    \end{pmatrix},
\]
the term $t_\psi$ in \eqref{termini} is the inverse of the
product of the components of $V_\psi$. We also use the notation
$t_{V_\psi}$ for $t_\psi$.

We now define a tree $\VV$ of vectors, in such a way that the $k$-th
level of $\VV$ is associated to the set $A^\complement \cap \{\varphi=k\}$. We
first make a small modification in order to simplify the argument. For
each $k\ge 1$, we consider the subsets
\[
    \Phi_k \coloneqq A^\complement \cap \{ \varphi=k\} \cap \Gamma_1,
\]
so that
\[
    \Phi_k = \bigcup_{\omega \in \Omega_k} \phi_1 \circ
    \phi_{\omega_1} \circ \dots \circ \phi_{\omega_{k-2}} \circ \phi_1
    (A)
\]
Obviously $\Phi_1 = A^\complement \cap \{ \varphi=1\}=\{\phi_1(A)\}$, whereas for
example
\[
    \Phi_2 = \left\{ \phi_1 \circ \phi_1 (A)\right\} \subsetneq
    A^\complement \cap \{ \varphi=2\} = \left\{ \phi_0 \circ \phi_1 (A),\, \phi_1
      \circ \phi_1 (A)\right\}.
\]
We are now ready to introduce the levels of our tree $\VV$. For each
$k\geq 1$ we define
\[
    L_k \coloneqq \set{V_\psi \, :\, \psi = \phi_1 \circ
      \phi_{\omega_1} \circ \dots \circ \phi_{\omega_{k-2}} \circ
      \phi_1}\quad\text{and}\quad \lambda_k \coloneqq \sum_{V \in L_k}
    t_V,
\]
where $t_V$ is the inverse of the product of the components of the
vector $V$. The $k$-th row of $\VV$ is the set $L_k$. We have then
associated two objects to each set $A^\complement \cap \{\varphi=1\}$: the list
of vectors $L_k$ and the quantity $\lambda_k$. For instance,
corresponding to $\Phi_1$ we obtain
\[
    V_1 \coloneqq V_{\phi_1} =
    \begin{pmatrix}
        2 \\ 1 \\ 1
    \end{pmatrix}
\]
and $\lambda_1 = t_1=\frac 12$. The vector $V_1$ is the root of our
tree $\VV$. Then
\[
    L_2 = \set{ V_{\phi_1 \circ \phi_1} = \begin{pmatrix} 3 \\ 1 \\ 1
      \end{pmatrix}} \quad\text{and}\quad L_3 = \set{ V_{\phi_1 \circ
        \phi_1 \circ \phi_1} = \begin{pmatrix}
          4 \\
          1 \\
          1
      \end{pmatrix} , V_{\phi_1 \circ \phi_0 \circ \phi_1}
      = \begin{pmatrix}
          4 \\
          2 \\
          1
      \end{pmatrix}},
\]
as follows by writing the matrix representation of the involved
maps. Furthermore, for the first rows, one easily finds
$\lambda_2 = t_{V_{\phi_1 \circ \phi_1} } = \frac 13$,
$\lambda_3 = t_{V_{\phi_1 \circ \phi_1 \circ \phi_1} } + t_{V_{\phi_1
    \circ \phi_0 \circ \phi_1} } = \frac 14 + \frac 18$, and so on.

Moreover the tree $\VV$ can be generated from the root
vector $V_1$ by the following algorithm, without using the maps
$\psi$. Let us consider the matrices
\[
    M_1 = \begin{pmatrix} 1 & 0 & 1 \\ 0 & 1 & 0 \\ 0 & 0 &
        1 \end{pmatrix} \quad \text{and} \quad M_{10}= \begin{pmatrix}
        1 & 1 & 1 \\ 1 & 0 & 0 \\ 0 & 1 & 0 \end{pmatrix},
\]
that are the matrix representations of the maps $\phi_1$ and
$\phi_1\circ \phi_0$ respectively. Let them act on the vectors of the
tree to generate new vectors. When we apply $M_1$ to a vector
$V\in L_k$, we obtain a vector in $L_{k+1}$, and when we apply
$M_{10}$ we obtain a vector in $L_{k+2}$. Hence, vectors in the $k$-th
row of $\VV$ are generated by applying $M_1$ to all vectors in the
$(k-1)$-th row and $M_{10}$ to all vectors in the $(k-2)$-th
row. Applying this algorithm starting from $L_1=\set{V_1}$, we
immediately obtain for the first rows
\[
    L_2 = \set{ M_1 V_1 =  \begin{pmatrix} 1 & 0 & 1 \\ 0 & 1 & 0 \\ 0 & 0 &
        1 \end{pmatrix}\begin{pmatrix}
          2 \\
          1 \\
          1
      \end{pmatrix} = \begin{pmatrix}
          3 \\
          1 \\
          1
      \end{pmatrix}}
\]
\[
    L_3 = \set{ M_1(M_1 V_1) =  \begin{pmatrix} 1 & 0 & 1 \\ 0 & 1 & 0 \\ 0 & 0 &
        1 \end{pmatrix}\begin{pmatrix}
          3 \\
          1 \\
          1
      \end{pmatrix} = \begin{pmatrix}
          4 \\
          1 \\
          1
      \end{pmatrix},\ M_{10}V_1 = \begin{pmatrix}
        1 & 1 & 1 \\ 1 & 0 & 0 \\ 0 & 1 & 0 \end{pmatrix}\begin{pmatrix}
          2 \\
          1 \\
          1
      \end{pmatrix} = \begin{pmatrix}
          4 \\
          2 \\
          1
      \end{pmatrix}},
\]
as above.

\begin{lemma}\label{lemma:nuova-tau}
    For $n\geq 1$ define
    \[
        \tilde \tau_n \coloneqq \sum_{k=1}^n \lambda_k = \sum_{k=1}^n
        \sum_{V \in L_k} t_V.
    \]
    Then $\tilde \tau_n < \tau_n < \tilde \tau_n +
        \frac{\mu(\Gamma_0)}{m(A)}$ and $\tilde \tau_n \gtrsim \log^2 n$.
\end{lemma}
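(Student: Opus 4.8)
The plan is to prove the two assertions separately. For the comparison between $\tau_n$ and $\tilde\tau_n$, the point is that $\tilde\tau_n=\sum_{k=1}^n\lambda_k$ collects exactly the terms $t_\psi$ coming from the maps $\psi=\phi_{\omega_0}\circ\cdots\circ\phi_1$ with $\omega\in\Omega_k$ and $\omega_0=1$ (these are precisely the maps whose image $\phi_\omega(A)$ lies in $\Gamma_1$, i.e. makes up $\Phi_k$), whereas $\tau_n$ is the same partial sum with no restriction on $\omega_0$. Hence the left inequality $\tilde\tau_n\le\tau_n$ is immediate, as $\tilde\tau_n$ is a partial sum of the positive terms constituting $\tau_n$ (and it is strict as soon as $n\ge 2$, e.g. because the word $01\in\Omega_2$ contributes). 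For the upper bound I would note that a word $\omega\in\Omega_k$ with $\omega_0=0$ necessarily has $\omega_1=1$ (no ``$00$'' occurs), so $\phi_\omega=\phi_0\circ\phi_{\omega'}$ with $\omega'\in\Omega_{k-1}$, $\omega'_0=1$; therefore
\[
\bigcup_{\omega\in\Omega_k,\ \omega_0=0}\phi_\omega(A)=\phi_0(\Phi_{k-1}),\qquad k\ge 2 .
\]
The sets $\Phi_j$ are pairwise disjoint (they sit in distinct level sets of $\varphi$), $\phi_0$ is injective and maps into $\Gamma_0$, so the sets $\phi_0(\Phi_j)$, $j\ge 1$, are disjoint subsets of $\Gamma_0$. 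Invoking the left inequality of Lemma~\ref{lem:integral}, namely $m(A)\,t_\omega\le\mu(\phi_\omega(A))$, and summing,
\[
m(A)\,\bigl(\tau_n-\tilde\tau_n\bigr)=m(A)\sum_{k=2}^{n}\ \sum_{\omega\in\Omega_k,\ \omega_0=0}t_\omega\ \le\ \sum_{j=1}^{n-1}\mu\bigl(\phi_0(\Phi_j)\bigr)=\mu\Bigl(\,\bigsqcup_{j=1}^{n-1}\phi_0(\Phi_j)\Bigr)<\mu(\Gamma_0),
\]
the last inequality being strict because $\bigsqcup_j\phi_0(\Phi_j)\subseteq\phi_0(\Gamma_1)$ is a proper subset of $\Gamma_0$, which has finite measure. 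This yields $\tau_n<\tilde\tau_n+\mu(\Gamma_0)/m(A)$.

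For the lower bound $\tilde\tau_n\gtrsim\log^2 n$, the idea is to exhibit at each level $k$ a sufficiently rich subfamily of $L_k$. Taking only the ``pure'' word $1^{k}$ gives a single vector $(k+1,1,1)^t$ with $t=1/(k+1)$, hence merely $\tilde\tau_n\gtrsim\log n$; the extra factor $\log k$ comes from a two-parameter family. For $a,b\ge 0$ set $k=a+b+3$ and consider the word $1^{a+1}01^{b+1}\in\Omega_k$, that is the map $\psi_{a,b}=\phi_1^{a+1}\circ\phi_0\circ\phi_1^{b+1}$. Using $M_1^{m}=\left(\begin{smallmatrix}1&0&m\\0&1&0\\0&0&1\end{smallmatrix}\right)$ and the matrix $M_0$, a direct multiplication $M_{\psi_{a,b}}=M_1^{a+1}M_0M_1^{b+1}$ gives
\[
M_{\psi_{a,b}}=\begin{pmatrix}1&a+1&b+2\\1&0&b+1\\0&1&0\end{pmatrix},\qquad V_{\psi_{a,b}}=\begin{pmatrix}a+b+4\\b+2\\1\end{pmatrix},\qquad t_{\psi_{a,b}}=\frac{1}{(a+b+4)(b+2)} .
\]
Distinct pairs $(a,b)$ give distinct vectors $V_{\psi_{a,b}}$, hence distinct summands in $\lambda_k=\sum_{V\in L_k}t_V$. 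Summing over the diagonal $a+b=k-3$,
\[
\lambda_k\ \ge\ \sum_{b=0}^{k-3}\frac{1}{(k+1)(b+2)}\ =\ \frac{1}{k+1}\sum_{j=2}^{k-1}\frac1j\ \gtrsim\ \frac{\log k}{k},
\]
so that $\tilde\tau_n=\sum_{k=1}^{n}\lambda_k\gtrsim\sum_{k=3}^{n}\frac{\log k}{k}\gtrsim\log^2 n$ by comparison with $\int\frac{\log x}{x}\,dx$.

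I expect the only genuinely delicate point to be the choice of subfamily for the lower bound: one needs, at level $k$, of order $k$ maps whose values $t_\psi$ add up to about $\frac{\log k}{k}$, and since $t_\psi$ is the reciprocal of the product of the three row-sums of $M_\psi$, the obvious one-parameter families (e.g. the purely-$\phi_1$ words) yield too few terms, while the two-parameter family $1^{a+1}01^{b+1}$ is exactly calibrated to produce a harmonic sum after fixing $k$. The remaining ingredients are routine: the matrix identities are straightforward inductions, the set-theoretic identities for $A^\complement\cap\{\varphi=k\}$, $\Phi_k$ and $\phi_0(\Phi_{k-1})$ are exactly the symbolic descriptions recalled just before the lemma, and the harmonic and logarithmic sums are standard integral comparisons.
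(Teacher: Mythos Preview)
Your proposal is correct and follows essentially the same route as the paper. For the first part, both you and the paper observe that the missing terms in $\tilde\tau_n$ are precisely those with $\omega_0=0$, that the corresponding sets $\phi_\omega(A)$ are pairwise disjoint subsets of $\Gamma_0$, and that the left inequality of Lemma~\ref{lem:integral} converts this into the bound $\mu(\Gamma_0)/m(A)$; your identification of these sets as $\phi_0(\Phi_{k-1})$ is a slightly more explicit packaging of the same argument. For the second part, the paper proves by induction, via the tree algorithm with $M_1$ and $M_{10}$, that $L_k$ contains the vectors $(k+1,j,1)^t$ for $j=1,\dots,k-1$; your explicit family $\psi_{a,b}=\phi_1^{a+1}\circ\phi_0\circ\phi_1^{b+1}$ produces exactly the same vectors for $j=2,\dots,k-1$ (you miss $j=1$, which comes from the pure word $1^k$, but this is immaterial for the asymptotic). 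The two presentations thus arrive at the identical harmonic estimate $\lambda_k\gtrsim\frac{1}{k+1}\sum_{j}\frac1j$ and the same conclusion.
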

\begin{proof}
    The difference between $\tilde \tau_n$ and $\tau_n$ is that for
    each $k=1,\,\ldots,\,n$, in $\tilde \tau_n$ we are not considering
    the terms $t_{V_\psi}$ for the maps
    $\psi = \phi_{\omega_0} \circ \phi_{\omega_1} \circ \dots \circ
    \phi_{\omega_{k-2}} \circ \phi_1$ with $\omega_0=0$. Recalling
    that for such maps $t_{V_\psi} \le \frac{\mu(\psi(A))}{m(A)}$ by
    Lemma~\ref{lem:integral}, that $\psi(A) \subseteq \Gamma_0$ if
    $\omega_0=0$, and that the sets $\psi(A)$ are disjoint for
    different maps $\psi$ by definition, for all $n\ge 1$ we have that
\[
        \tilde \tau_n < \tau_n < \tilde \tau_n +
        \frac{\mu(\Gamma_0)}{m(A)}.
\]
    We prove by induction that each row $L_k$ with $k\ge 2$ contains
    the vectors
    \[
        \begin{pmatrix}
            k+1 \\
            j \\
            1
        \end{pmatrix}\quad j=1,\,\dots,\,k-1.
    \]
    By the definition of $\lambda_k$, this implies that  $\tilde \tau_n \coloneqq \sum_{k=1}^n \lambda_k \ge \sum_{k=1}^n\, \frac{1}{k+1}\, \sum_{j=1}^{k-1}\, \frac 1j \gtrsim \log^2 n$. For
    $k=2$, the row $L_2$ contains only the vector $M_1 V_1$, and the
    base case is proved. Let us assume that the statement is true for
    $r=2,\,\dots,\,k$, then using the algorithm to construct $\VV$, we
    have that $L_{k+1}$ contains the vectors
    \[
        M_1 \begin{pmatrix}
            k+1 \\
            j \\
            1
        \end{pmatrix} = \begin{pmatrix}
            k+2 \\
            j \\
            1
        \end{pmatrix}\quad j=1,\,\dots,\,k-1
        \quad\text{and}\quad
        M_{10} \begin{pmatrix}
            k \\
            1 \\
            1
        \end{pmatrix} = \begin{pmatrix}
            k+2 \\
            k \\
            1
        \end{pmatrix} .
    \]
    Hence the statement is true for $L_{k+1}$.
\end{proof}

\begin{proposition}\label{prop:w-below}
    The wandering rate $w_n(A)$ satisfies
    $w_n(A) \gtrsim \log^2 n$.
\end{proposition}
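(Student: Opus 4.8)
The proof is now essentially a matter of assembling the estimates established above. The plan is to combine the lower bound in \eqref{stima-uno} with the content of Lemma~\ref{lemma:nuova-tau}. Recall that \eqref{stima-uno} gives $w_n(A) \ge m(A)\,\tau_n$ for all $n\ge 1$, where $\tau_n = \sum_{k=1}^n \sum_{\omega\in\Omega_k} t_{\omega_0\omega_1\cdots\omega_{k-2}1}$ is the total mass (up to the constants in Lemma~\ref{lem:integral}) contributed by the sets $A^\complement\cap\{\varphi=k\}$ for $k\le n$.

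First I would invoke Lemma~\ref{lemma:nuova-tau}, which yields $\tau_n > \tilde\tau_n$ together with $\tilde\tau_n \gtrsim \log^2 n$; the latter follows from the explicit lower bound $\tilde\tau_n \ge \sum_{k=1}^n \frac{1}{k+1}\sum_{j=1}^{k-1}\frac 1j$ coming from the vectors $\binom{k+1}{j}{1}$ that the tree $\VV$ is shown to contain in each row $L_k$. Then, chaining these inequalities,
\[
    w_n(A) \;\ge\; m(A)\,\tau_n \;>\; m(A)\,\tilde\tau_n \;\gtrsim\; \log^2 n,
\]
which is exactly the claimed bound. The constant $m(A) = \frac{1}{3}\cdot\frac{1}{2}\cdot\frac{1}{6}$ (the Lebesgue area of the triangle $A$ with vertices $Q_1,Q_2,Q_3$) is positive, so it is harmlessly absorbed into the implicit constant in $\gtrsim$.

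There is no real obstacle remaining at this stage: all the genuine work—the identification of $A^\complement\cap\{\varphi=k\}$ with a union of images $\phi_\omega(A)$ over $\omega\in\Omega_k$, the measure estimates of Lemma~\ref{lem:integral}, the matrix-tree description of the quantities $t_\psi$, and the harmonic-type lower bound for $\tilde\tau_n$ in Lemma~\ref{lemma:nuova-tau}—has already been carried out. Combined with Proposition~\ref{prop:w-above}, this proposition establishes $w_n(A)\asymp \log^2 n$, and hence, via \cite[Lemma 3.7.4]{aaronson:iet} and \cite[Proposition 7]{zwei}, the asymptotics $a_n(S)\asymp \frac{n}{\log^2 n}$ asserted in Theorem~\ref{thm-pde}.

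\begin{proof}
    By \eqref{stima-uno} we have $w_n(A)\ge m(A)\,\tau_n$ for every $n\ge 1$. By Lemma~\ref{lemma:nuova-tau}, $\tau_n > \tilde\tau_n$ and $\tilde\tau_n \gtrsim \log^2 n$. Since $m(A)>0$, it follows that
    \[
        w_n(A) \ge m(A)\,\tau_n > m(A)\,\tilde\tau_n \gtrsim \log^2 n,
    \]
    as claimed.
\end{proof}
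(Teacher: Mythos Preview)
Your proof is correct and follows exactly the paper's approach: combine the lower bound $w_n(A)\ge m(A)\,\tau_n$ from \eqref{stima-uno} with $\tau_n>\tilde\tau_n\gtrsim\log^2 n$ from Lemma~\ref{lemma:nuova-tau}. The parenthetical value you give for $m(A)$ is off (the area of the triangle with vertices $Q_1,Q_2,Q_3$ is $\tfrac{1}{12}$), but this is an aside that plays no role in the argument.
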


\begin{proof}
It follows from \eqref{stima-uno} and Lemma \ref{lemma:nuova-tau}.
\end{proof}

Finally we discuss the property of regular variation for $w_n(A)$. The first remark is that if $w_n(A)$ is regularly varying then it is slowly varying. By \eqref{def:reg-var}, if $w_n(A)$ is regularly varying then there exists $\alpha\in \R$ such that
\[
\lim_{n\to \infty}\, \frac{w_{cn}}{w_n} = c^\alpha
\]
for all $c\in \N$. However by Propositions \ref{prop:w-above} and \ref{prop:w-below}, there exist two constants $k_1,k_2$ with $0<k_1 < 1 < k_2$ such that
\[
k_1\, \frac{\log^2 (cn)}{\log^2 (n)} \le \frac{w_{cn}}{w_n} \le k_2\, \frac{\log^2 (cn)}{\log^2 (n)}
\]
and passing to the limit we obtain
\[
k_1 \le c^\alpha \le k_2
\]
for all $c\in \N$. Hence $\alpha=0$.

A second remark is that we have a sufficient condition on the sequence $\lambda_k$ from Lemma \ref{lemma:nuova-tau} for $w_n(A)$ being slowly varying. Since $\tilde \tau_n(A) \gtrsim \log^2 n$, it is immediate that
\[
\liminf_{k\to \infty} \, \frac{k\, \lambda_k}{\log^2 k} = 0
\]
To have that $w_n(A)$ is slowly varying it is enough that also the limsup vanishes.

\begin{lemma} \label{lem:cond-suff}
If $\lambda_k = o(\frac{\log^2 k}{k})$ then $w_n(A)$ is slowly varying.
\end{lemma}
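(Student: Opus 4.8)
The plan is to show directly that $w_{\lfloor cn\rfloor}(A)/w_n(A)\to 1$ as $n\to\infty$ for every $c>0$, which by \eqref{def:reg-var} (with $\alpha=0$) is exactly the assertion that $w_n(A)$ is slowly varying. Observe first that $w_n(A)=\sum_{k=0}^{n-1}\mu(A\cap\{\varphi>k\})$ is a partial sum of non-negative terms, so $(w_n(A))_{n\ge1}$ is non-decreasing, and by Proposition~\ref{prop:w-below} we have $w_n(A)\gtrsim\log^2 n$, so $w_n(A)\to+\infty$. I would reduce the claim to proving
\[
    w_{\lfloor cn\rfloor}(A)-w_n(A)=o\bigl(w_n(A)\bigr)\qquad\text{for every }c>1.
\]
Indeed the case $c=1$ is trivial; and for $0<c<1$ one applies the case $c'=1/c>1$ to $m=\lfloor cn\rfloor$ (noting that $n-\lfloor c'm\rfloor$ stays bounded as $n$ varies), together with the elementary remark that $w_{\ell+j}(A)/w_\ell(A)\to1$ uniformly for $j$ in any fixed bounded range, since the total increment is at most $j\,\mu(A)$, hence bounded, while $w_\ell(A)\to+\infty$.

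Fix then $c>1$. By the identity $\mu(A\cap\{\varphi>k\})=\mu(A^\complement\cap\{\varphi=k\})$ used throughout the appendix, $w_{\lfloor cn\rfloor}(A)-w_n(A)=\sum_{k=n}^{\lfloor cn\rfloor-1}\mu(A^\complement\cap\{\varphi=k\})$. The key step is to split $\sum_{\omega\in\Omega_k}t_\omega$ according to the first letter $\omega_0$ of $\omega$. Setting $\rho_k\coloneqq\sum_{\omega\in\Omega_k,\ \omega_0=0}t_\omega$, the definition of the level $L_k$ of the tree $\VV$ gives $\lambda_k=\sum_{\omega\in\Omega_k,\ \omega_0=1}t_\omega$, so that $\sum_{\omega\in\Omega_k}t_\omega=\lambda_k+\rho_k$, and \eqref{stima-uno-term} yields $\mu(A^\complement\cap\{\varphi=k\})\le 27\,m(A)(\lambda_k+\rho_k)$. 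Now Lemma~\ref{lemma:nuova-tau} states $\tilde\tau_n<\tau_n<\tilde\tau_n+\mu(\Gamma_0)/m(A)$, and since $\tau_n-\tilde\tau_n=\sum_{k=1}^n\rho_k$ this forces $\sum_{k\ge1}\rho_k<+\infty$; in particular $\sum_{k=n}^{\lfloor cn\rfloor-1}\rho_k\le\sum_{k\ge n}\rho_k\to0$, which is negligible against $w_n(A)\gtrsim\log^2 n$. It therefore remains to control $\sum_{k=n}^{\lfloor cn\rfloor-1}\lambda_k$, and here the hypothesis enters: since $x\mapsto\log^2 x/x$ is decreasing for $x\ge e^2$, from $\lambda_k=o(\log^2 k/k)$ we get that for every $\eps>0$ and all large $n$ one has $\lambda_k<\eps\log^2 k/k\le\eps\log^2 n/n$ for $n\le k<\lfloor cn\rfloor$, hence $\sum_{k=n}^{\lfloor cn\rfloor-1}\lambda_k\le(c-1)\eps\log^2 n$; as $\eps$ is arbitrary, $\sum_{k=n}^{\lfloor cn\rfloor-1}\lambda_k=o(\log^2 n)$. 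Combining the two bounds with Proposition~\ref{prop:w-below} gives $w_{\lfloor cn\rfloor}(A)-w_n(A)=o(\log^2 n)=o(w_n(A))$, as required.

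The step I expect to be the main obstacle is the reduction itself: one cannot obtain slow variation of $w_n(A)$ directly from the two-sided estimate $m(A)\,\tau_n\le w_n(A)\le 27\,m(A)\,\tau_n$ of \eqref{stima-uno} together with slow variation of $\tilde\tau_n$, because the genuinely different multiplicative constants there could in principle make the ratio $w_{\lfloor cn\rfloor}(A)/w_n(A)$ oscillate. The argument has to be run on the differences $w_{\lfloor cn\rfloor}(A)-w_n(A)$, and the decomposition $\sum_{\omega\in\Omega_k}t_\omega=\lambda_k+\rho_k$ with $\sum_k\rho_k<+\infty$ (read off from Lemma~\ref{lemma:nuova-tau}) is precisely what turns the hypothesis $\lambda_k=o(\log^2 k/k)$ into a bound on this difference that beats the lower estimate $w_n(A)\gtrsim\log^2 n$.
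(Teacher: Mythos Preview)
Your proof is correct, and shares with the paper's proof the two essential reductions: passing from ratios to differences (so that the upper bound from \eqref{stima-uno-term} on the increments can be played against the lower bound $w_n(A)\gtrsim\log^2 n$), and using Lemma~\ref{lemma:nuova-tau} to discard the $\omega_0=0$ contribution as the tail of a convergent series. Where you diverge from the paper is in the final step. The paper first reduces from $w_n(A)$ to $\tau_n$ and then to $\tilde\tau_n$, and shows that $\tilde\tau_n$ is slowly varying by a monotonicity argument borrowed from Bingham--Goldie--Teugels: for each $\alpha>0$ the hypothesis $\lambda_{k}=o(\log^2 k/k)$ together with $\tilde\tau_n\gtrsim\log^2 n$ forces $n^{-\alpha}\tilde\tau_n$ to be eventually non-increasing, whence $\tilde\tau_{2n}/\tilde\tau_n\le 2^\alpha$ for every $\alpha>0$. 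You instead estimate the block $\sum_{k=n}^{\lfloor cn\rfloor-1}\lambda_k$ directly, using that $x\mapsto\log^2 x/x$ is decreasing to get the uniform bound $\lambda_k\le\eps\log^2 n/n$ on the whole block, and then sum. Your route is more elementary and self-contained (no external reference needed), and handles all $c>1$ at once rather than bootstrapping from $c=2$; the paper's route is a touch more robust in that it would apply verbatim under the weaker hypothesis $\lambda_k=o(\tilde\tau_k/k)$, without needing an explicit monotone majorant for $\lambda_k$.
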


\begin{proof}
From \eqref{stima-uno-term} we obtain that if $\tau_n$ is slowly varying the same holds for $w_n(A)$. Indeed
\[
    w_{2n}(A) - w_n(A) = \sum_{k=n+1}^{2n} \mu(A^\complement  \cap \{ \varphi
    =k\}) \le 27m(A) \sum_{k=n+1}^{2n} \sum_{\omega\in \Omega_k}
    t_{\omega_0 \omega_1\dots\omega_{k-2} 1} = 27m(A) (\tau_{2n} -
    \tau_n)
\]
and
\[
    w_n(A) = \sum_{k=1}^n \mu(A^\complement  \cap \{ \varphi =k\}) \ge m(A)
    \sum_{k=1}^n \sum_{\omega\in \Omega_k} t_{\omega_0
      \omega_1\dots\omega_{k-2} 1} = m(A) \tau_n.
\]
In conclusion
\[
    1\le \frac{w_{2n}(A)}{w_n(A)} = 1+ \frac{w_{2n}(A) -
      w_n(A)}{w_n(A)} \le 1 + 27 \frac{\tau_{2n}-\tau_n}{\tau_n} .
\]
If $(\tau_n)_{n\geq 1}$ is slowly varying the term
$\frac{\tau_{2n}-\tau_n}{\tau_n}$ is vanishing, and the result
follows.

Moreover from Lemma \ref{lemma:nuova-tau} it is immediate that if $\tilde \tau_n$ is slowly varying then the same is true for $\tau_n$. We are thus reduced to study $\tilde \tau_n$. We first claim that it is enough to show that \eqref{def:reg-var} holds with $\alpha=0$ only for $c=2$ (see for example \cite[Proposition 1.10.1]{regular}). Indeed, for $1<c<2$ we write
\[
    1 \le \frac{\tilde \tau_{\lfloor cn \rfloor}}{\tilde \tau_n} \le
    \frac{\tilde \tau_{2n}}{\tilde \tau_n}.
\]
For $c>2$, let $k\ge 1$ such that $c\le 2^k$, then we write
\[
    1\le \frac{\tilde \tau_{\lfloor cn \rfloor}}{\tilde \tau_n} \le \frac{\tilde \tau_{2^k
        n}}{\tilde \tau_{2^{k-1}n}} \frac{\tilde \tau_{2^{k-1}
        n}}{\tilde \tau_{2^{k-2}n}} \cdots \frac{\tilde \tau_{2 n}}{\tilde \tau_{n}}
\]
and for all $j=1,\,\dots,\,k $ we have
$\frac{\tilde \tau_{2^j n}}{\tilde \tau_{2^{j-1}n}} \to 1$, because it is a
subsequence of $\frac{\tilde \tau_{2n}}{\tilde \tau_{n}}$. Hence \eqref{def:reg-var}
follows again with $\alpha=0$ for $c>2$. We can proceed analogously for the case
$0<c<1$, which completes the proof of the claim.

Moreover we follow the proof of \cite[Theorem 1.5.4]{regular} to show that \eqref{def:reg-var} holds with $\alpha=0$ for $c=2$. Let $\alpha>0$, then by definition the sequence $\phi(n)\coloneqq n^\alpha\, \tilde \tau_n$ is non-decreasing. We also show that the sequence $\psi(n)\coloneqq n^{-\alpha}\, \tilde \tau_n$ is eventually non-increasing. Indeed
\[
\psi(n)-\psi(n+1) = \psi(n)\, \left( 1- \frac{\tilde \tau_{n+1}}{\tilde \tau_n}\, \frac{n^\alpha}{(n+1)^\alpha} \right) = \psi(n)\, \left( 1- \frac{1+\frac{\lambda_{n+1}}{\tilde \tau_n}}{(1+\frac 1n)^\alpha} \right)
\]
and $\lambda_{n+1} = o(\frac{\log^2 (n+1)}{n+1})$ together with $\tilde \tau_n \gtrsim \log^2 n$ implies
\[
1+\frac{\lambda_{n+1}}{\tilde \tau_n} = o \left( \frac{1}{n} \right).
\]
Hence we have that
\[
1+\frac{\lambda_{n+1}}{\tilde \tau_n} < \left(1+\frac 1n\right)^\alpha = 1+\alpha \frac 1n + o \left( \frac{1}{n} \right)
\]
for $n$ big enough. It follows that $\psi(n)-\psi(n+1)\ge 0$ eventually. For $n$ big enough we can then write
\[
2^{-\alpha} = \frac{\tilde \tau_{2n}}{\tilde \tau_n}\, \frac{\phi(n)}{\phi(2n)} \le \frac{\tilde \tau_{2n}}{\tilde \tau_n} \le \frac{\tilde \tau_{2n}}{\tilde \tau_n}\, \frac{\psi(n)}{\psi(2n)} = 2^\alpha
\]
hence
\[
2^{-\alpha} \le \liminf_{n\to \infty}\, \frac{\tilde \tau_{2n}}{\tilde \tau_n}\le \limsup_{n\to \infty}\, \frac{\tilde \tau_{2n}}{\tilde \tau_n}\le 2^\alpha.
\]
Since the previous argument can be repeated for all $\alpha>0$ it follows that
\[
\lim_{n\to \infty}\, \frac{\tilde \tau_{2n}}{\tilde \tau_n} = 1\, .
\]
\end{proof}

Finally, we recall from \cite{aaronson:iet} and \cite{zwei} that for the pointwise dual ergodic system $(\trianglecl, \mu,S)$ the renormalising sequence $a_n(S)$ is defined in terms of the wandering rate of a subset on which the induced map is $\psi$-mixing (see Proposition \ref{prop:cfm-pde}), and $a_n(S)$ is asymptotically independent on the chosen subset with this property. In particular this implies that if $\Gamma_0$ satisfies Proposition \ref{prop:cfm-pde} then we have
\[
a_n(S) \asymp \frac{n}{w_n(\Gamma_0)}
\]
where
\[
w_n(\Gamma_0) = \sum_{k=0}^{n-1}\, \mu(\triangle_k) = \int_0^n\, \frac{\log(1+v)}{v}\, dv
\]
as shown in Proposition \ref{prop:w-above}. Since it is not difficult to show that $w_n(\Gamma_0)$ is slowly varying, then we would have
\[
a_n(S) \sim \frac{n}{\int_0^n\, \frac{\log(1+v)}{v}\, dv}
\]
as discussed in Remark \ref{rem:slowly}. Unfortunately it is not known whether $\Gamma_0$ is a \emph{good set} to which apply Proposition \ref{prop:cfm-pde}.


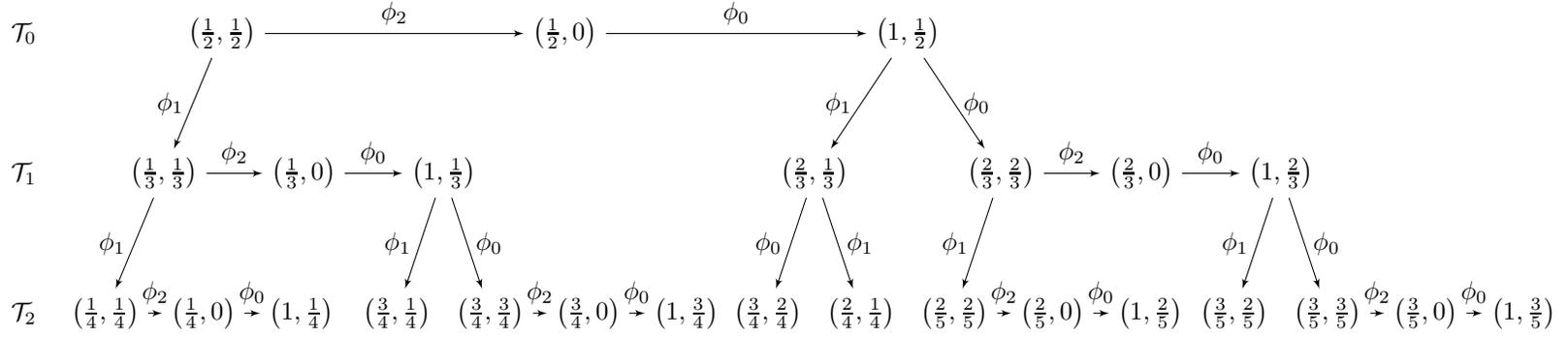
\begin{sidewaysfigure}
    \vspace{15cm}
    \begin{tikzpicture}[
    level 1/.style = {sibling distance=2.5cm},
    level 2/.style = {sibling distance=2.5cm},
    level 3/.style = {sibling distance=4cm},
    level 4/.style = {sibling distance=2cm},
    level distance = 3cm,
    edge from parent/.style = {draw,-latex'},
    every node/.style       = {-latex'},
    scale=0.64
    ]
    \def\d{4.35}
    \def\hd{0} 
    \def\hdd{-0.9}   
    \def\ld{2.5}   

    \node at (-4.25,0) {$\mathcal{T}_0$};
    \node at (-4.25,-3) {$\mathcal{T}_1$};
    \node at (-4.25,-6) {$\mathcal{T}_2$};

    \node {$\triple{1}{1}{2}$} [grow=down]
    child {
      node {$\triple{1}{1}{3}$}
      child {
        node {$\triple{1}{1}{4}$}
        child [grow=right] {
          node at (\hdd,0) {$\singy{1}{4}{0}$}
          child [grow=right] {
            node at (\hdd,0) {$\singx{1}{1}{4}$} [grow=down]
            edge from parent node[above] {$\phi_0$}}
          edge from parent node[above] {$\phi_2$}}
        edge from parent node[left] {$\phi_1$}}
      child [grow=right] {
        node at (\hd,0) {$\singy{1}{3}{0}$}
        child [grow=right] {
          node  at (\hd,0) {$\singx{1}{1}{3}$} [grow=down]
          child {
            node {$\triple{3}{1}{4}$}
            edge from parent node[left] {$\phi_1$}}
          child {
            node {$\triple{3}{3}{4}$}
            child [grow=right] {
              node at (\hdd,0) {$\singy{3}{4}{0}$}
              child [grow=right] {
                node at (\hdd,0) {$\singx{1}{3}{4}$} [grow=down]
                edge from parent node[above] {$\phi_0$}}
              edge from parent node[above] {$\phi_2$}}
            edge from parent node[right] {$\phi_0$}}
          edge from parent node[above] {$\phi_0$}
        }
        edge from parent node[above] {$\phi_2$}
      }
      edge from parent node[anchor=east] {$\phi_1$}
    }
    child [grow=right] {
      node at (\d,0) {$\singy{1}{2}{0}$}
      child [grow=right] {
        node  at (\d,0) {$\singx{1}{1}{2}$} [grow=down]
        child {
          node {$\triple{2}{1}{3}$} [grow=down]
          child {
            node {$\triple{3}{2}{4}$}
            edge from parent node[left] {$\phi_0$}}
          child {
            node {$\triple{2}{1}{4}$}
            edge from parent node[right] {$\phi_1$}}
          edge from parent node[left] {$\phi_1$}}
        child {
          node {$\triple{2}{2}{3}$}
          child  {
            node {$\triple{2}{2}{5}$}
            child [grow=right] {
              node at (\hdd,0) {$\singy{2}{5}{0}$}
              child [grow=right] {
                node at (\hdd,0) {$\singx{1}{2}{5}$} [grow=down]
                edge from parent node[above] {$\phi_0$}}
              edge from parent node[above] {$\phi_2$}}
            edge from parent node[left] {$\phi_1$}}
          child [grow=right] {
            node at (\hd,0) {$\singy{2}{3}{0}$}
            child [grow=right] {
              node at (\hd,0) {$\singx{1}{2}{3}$} [grow=down]
              child {
                node {$\triple{3}{2}{5}$}
                edge from parent node[left] {$\phi_1$}}
              child {
                node {$\triple{3}{3}{5}$}
                child [grow=right] {
                  node at (\hdd,0) {$\singy{3}{5}{0}$}
                  child [grow=right] {
                    node at (\hdd,0) {$\singx{1}{3}{5}$}
                    edge from parent node[above] {$\phi_0$}}
                  edge from parent node[above] {$\phi_2$}}
                edge from parent node[right] {$\phi_0$}}
              edge from parent node[above] {$\phi_0$}
            }
            edge from parent node[above] {$\phi_2$}
          }
          edge from parent node[right] {$\phi_0$}}
        edge from parent node[above] {$\phi_0$}
      }
      edge from parent node[above] {$\phi_2$}
    };
\end{tikzpicture}
    \caption{The first three levels of the tree generated through the
      local inverses of the map $\tilde{S}$.}\label{fig:tree}
\end{sidewaysfigure}

\end{document}